\documentclass[11pt]{article}

\usepackage{amssymb}
\usepackage{amsfonts}
\usepackage{eucal}
\usepackage{amsmath}
\oddsidemargin=0cm \textwidth=16.5cm \textheight=24cm
\topmargin=-1.5cm
\newtheorem{theorem}{Theorem}[section]

\newtheorem{definition}[theorem]{Definition}
\newtheorem{corollary}[theorem]{Corollary}

\newtheorem{lemma}[theorem]{Lemma}
\newtheorem{remark}[theorem]{Remark}

\newenvironment{proof}[1][Proof]{\textbf{#1.} }{\ \rule{0.5em}{0.5em}}
\input pictex.sty

 \usepackage{cancel}
 \usepackage{xcolor}
 
 \usepackage{amsmath,color,ulem}
\newcommand{\stkout}[1]{\ifmmode\text{\sout{\ensuremath{#1}}}\else\sout{#1}\fi}

\DeclareMathOperator\Cov{Cov}

\begin{document}

 \title{Maximal inequalities and convergence results on multidimensionally indexed demimartingales  }
 	\author{Milto Hadjikyriakou\footnote{School of Sciences, University of Central Lancashire, Cyprus campus, 12-14 University Avenue, Pyla, 7080 Larnaka, Cyprus (email:mhadjikyriakou@uclan.ac.uk).}~~ and B.L.S. Prakasa Rao \footnote{CR RAO Advanced Institute of Mathematics, Statistics and Computer Science, Hyderabad 500046, India (e-mail: blsprao@gmail.com).}}	
 \maketitle

\begin{abstract}
We obtain  some maximal probability and moment inequalities for multidimensionally indexed demimartingales. Although the class of single-indexed demimartingales has been studied extensively, no significant amount of work has been done for the corresponding multiindexed class of random variables. This work aims to fill in this gap in the literature by extending well-known inequalities and asymptotic results to this more general class of random variables.
\end{abstract}

\textbf{Keywords}: Multidimensionally Indexed Random Variables, Demimartingales, Demisubmartingales, H\'{a}jek-R\'{e}nyi Inequality,  Doob's inequality, Chow-type inequality, Whittle-type inequality.
\medskip

\textbf{MSC 2010:} 60E15; 60F15; 60G42; 60G48.

\section{Introduction}
The notion of demimartingales was introduced by Newman and Wright in \cite{NW1982} in an effort to generalize the concept of positive association. This new class of random variables contains the class of associated random variables, since it can be easily proven that the partial sums of mean zero associated random variables form a sequence of demimartingales. This new class of random objects has been studied extensively over the last few decades; see for example \cite{PR2012} for an extensive discussion on this topic. The notion of multidimensionally indexed demimartingales was introduced in \cite{MH2010}. The idea of random vectors indexed by lattice points is not new and its origin goes back to statistical mechanics and ergodic theory. Moreover, models of several phenomena in statistical physics, crystal physics or Euclidean quantum field theories involve multiple sums of multiindexed random variables. Having in mind all these applications and the observation that the partial sums of mean zero multiindexed associated random variables form a sequence of multiindexed demimartingales, it makes sense to further explore the properties and the asymptotic behavior of this general class of random variables.

\medskip

 Throughout the paper all random variables are defined on a probability space $(\Omega, \mathcal{F}, \mathcal{P})$ and the following notation will be used: let $\mathbb{N}^k$ denote the $k$-dimensional positive integer lattice. For $\mathbf{n}, \mathbf{m}, \in \mathbb{N}^k$ with $\mathbf{n}=\left(n_1, \ldots, n_k\right)$ and $\mathbf{m}=\left(m_1, \ldots, m_k\right)$, we write $\mathbf{n} \leq \mathbf{m}$ if $n_i \leq m_i, i=1, \ldots, k$ and $\mathbf{n}<\mathbf{m}$ if $n_i \leq m_i, i=1, \ldots, k$ with at least one strict inequality. We say that $\mathbf{n} \rightarrow \infty$ if $\displaystyle\min _{1 \leq j \leq k} n_j \rightarrow \infty$ and the notation $I(A)$ is used to denote the indicator function of the set $A$.

\medskip

The paper is structured as follows: in Section 2, we provide some basic definitions and important results that will be useful for the rest of the paper. Section 3 is devoted to Doob-type inequalities and related maximal results. In Section 4, a new Chow-type inequality is discussed together with an asymptotic result and a H\'{a}jek-R\'{e}nyi inequality for multiindexed associated random variables. In Section 5, we present some maximal inequalities given in terms of Orlicz functions while, in Section 6, an uprossing inequality is studied. Finally, in Section 7, we present a Whittle-type inequality and provide a strong law.

\section{Preliminaries}
We start by providing the definitions of multiindexed associated random variables and multiindexed demi(sub)martingales.

\begin{definition}
	A collection of multidimensionally indexed random variables $\left\{X_{\mathbf{i}}, \mathbf{i} \leq \mathbf{n}\right\}$ is said to be associated if for any two coordinatewise nondecreasing functions $f$ and $g$
	$$
	\operatorname{Cov}\left(f\left(X_{\mathbf{i}}, \mathbf{i} \leq \mathbf{n}\right), g\left(X_{\mathbf{i}}, \mathbf{i} \leq \mathbf{n}\right)\right) \geq 0,
	$$
	provided that the covariance is defined. An infinite collection is associated if every finite subcollection is associated.
\end{definition}

\noindent Note that the definition given above is exactly the same as the classical definition of association, stated for the case of multidimensionally indexed random variables, since the index of the variables in no way affects the qualitative property of association, i.e., that nondecreasing functions of all (or some) of the variables are nonnegatively correlated. 

\medskip

The class of multidimensionally indexed demimartingales and demisubmartingales was introduced in \cite{MH2010} (see also \cite{CH2011}). 

\begin{definition}
	A collection of multidimensionally indexed random variables $\left\{S_{\mathbf{n}}, \mathbf{n}\in \mathbb{N}^k\right\}$ is called a multiindexed demimartingale if
	$$
	E\left[\left(S_{\mathbf{j}}-S_{\mathbf{i}}\right) f\left(S_{\mathbf{k}}, \mathbf{k} \leq \mathbf{i}\right)\right] \geq 0
	$$
	for all $\mathbf{i}, \mathbf{j} \in \mathbb{N}^k$ with $\mathbf{i} \leq \mathbf{j}$ and for all componentwise nondecreasing function If, in addition $f$ is required to be nonnegative, then the collection $\left\{S_{\mathbf{n}}, \mathbf{n}\in \mathbb{N}^k\right\}$ is said to be a multiindexed demisubmartingale.
\end{definition}

\begin{remark}
	It can easily be proven that the partial sums of mean zero multiindexed associated random variables form a sequence of multiindexed demimartingales. Furthermore, it is obvious that, with the natural choice of $\sigma$-algebras, a multiindexed (sub)martingale forms a multiindexed demi(sub)martingale.
\end{remark}

The next two results can be found in \cite{WH2009} (see Theorem 3.2 and 3.3 respectively). The first one provides a Doob-type inequality for functions of single-indexed demimartingales while the second one, which is a direct consequence of the Doob's inequality, provides upper bounds for  expectations of maxima of functions of a single-indexed demimartingale. The multiindexed analogues of these two results will be proved in the next section. 

\begin{theorem}
	\label{Doob}Let $S_{1}, S_{2}, \ldots$ be a demimartingale, $g$ be a nonnegative convex function on $\mathbb{R}$ with $g(0)=0$ and $g\left(S_{i}\right) \in L^{1}, i \geqslant 1$. Then for any $\varepsilon>0$,
	$$
	\varepsilon P\left\{\max _{1 \leqslant k \leqslant n} g\left(S_{k}\right) \geqslant \varepsilon\right\} \leqslant  \int_{\left\{\max _{1 \leqslant k \leqslant n} g\left(S_{k}\right) \geqslant \varepsilon\right\}} g\left(S_{n}\right) d P .
	$$
\end{theorem}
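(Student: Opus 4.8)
The plan is to reduce the statement to Doob's maximal inequality for \emph{nonnegative} demisubmartingales, after decomposing $g$ into monotone convex pieces. We may assume $\varepsilon>0$, since for $\varepsilon\le 0$ the right-hand side is nonnegative ($g\ge 0$) and there is nothing to prove. Because $g$ is convex with $g(0)=0$ and $g\ge 0$, the origin minimizes $g$, so $g$ is nonincreasing on $(-\infty,0]$ and nondecreasing on $[0,\infty)$; hence
$$
g_1(x)=\begin{cases} g(x),& x\ge 0,\\ 0,& x<0,\end{cases}\qquad g_2(x)=\begin{cases} g(x),& x\le 0,\\ 0,& x>0,\end{cases}
$$
are again nonnegative and convex, with $g_1$ nondecreasing, $g_2$ nonincreasing, $g_1(0)=g_2(0)=0$, $g_1g_2\equiv 0$ and $g=g_1+g_2$ on $\mathbb{R}$. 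In particular $\{g(S_k)\ge\varepsilon\}=\{g_1(S_k)\ge\varepsilon\}\cup\{g_2(S_k)\ge\varepsilon\}$ for every $k$, so, writing $F_i=\{\max_{1\le k\le n}g_i(S_k)\ge\varepsilon\}$, one has $\{\max_{1\le k\le n}g(S_k)\ge\varepsilon\}=F_1\cup F_2$.

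First I would show that $\{g_1(S_n)\}$ and $\{g_2(S_n)\}$ are nonnegative demisubmartingales with integrable terms (the latter because $0\le g_i(S_k)\le g(S_k)\in L^1$). Two elementary facts suffice: (i) $\{-S_n\}$ is again a demimartingale (reverse signs in the defining inequality, using that $-f(-S_1,\dots,-S_i)$ is coordinatewise nondecreasing when $f$ is); and (ii) a convex \emph{nondecreasing} function $m$ of a demimartingale is a demisubmartingale — indeed, from the subgradient bound $m(S_{k+1})-m(S_k)\ge m'(S_k)(S_{k+1}-S_k)$ and $f\ge 0$ we get $(m(S_{k+1})-m(S_k))f(m(S_1),\dots,m(S_k))\ge (S_{k+1}-S_k)\,m'(S_k)f(m(S_1),\dots,m(S_k))$, and since $m'\ge 0$ is nondecreasing, the coefficient of $S_{k+1}-S_k$ on the right is a nonnegative coordinatewise nondecreasing function of $S_1,\dots,S_k$, so the demimartingale property yields a nonnegative expectation. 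Applying (ii) to $g_1$ and to $\tilde g_2(x):=g_2(-x)$ (convex nondecreasing) together with (i), both $\{g_1(S_n)\}$ and $\{g_2(S_n)\}=\{\tilde g_2(-S_n)\}$ are demisubmartingales.

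Next I would invoke (or prove by induction on $n$) Doob's maximal inequality for a nonnegative demisubmartingale $\{T_k\}$: $\varepsilon P\{\max_{1\le k\le n}T_k\ge\varepsilon\}\le\int_{\{\max_{1\le k\le n}T_k\ge\varepsilon\}}T_n\,dP$. The inductive step, after the disjoint splitting $\{\max_{k\le n}T_k\ge\varepsilon\}=\{\max_{k\le n-1}T_k\ge\varepsilon\}\cup(\{\max_{k\le n-1}T_k<\varepsilon\}\cap\{T_n\ge\varepsilon\})$, comes down to $E[(T_n-T_{n-1})I(\max_{1\le k\le n-1}T_k\ge\varepsilon)]\ge 0$, which is just the demisubmartingale inequality applied to the nonnegative coordinatewise nondecreasing function $(x_1,\dots,x_{n-1})\mapsto I(\max_k x_k\ge\varepsilon)$. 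Applying this with $T_k=g_i(S_k)$ gives $\varepsilon P(F_i)\le\int_{F_i}g_i(S_n)\,dP$ for $i=1,2$; then, by subadditivity, by $g_i\ge 0$ with $F_i\subseteq F_1\cup F_2$, and by $g_1+g_2=g$,
$$
\varepsilon P(F_1\cup F_2)\le\varepsilon P(F_1)+\varepsilon P(F_2)\le\int_{F_1}g_1(S_n)\,dP+\int_{F_2}g_2(S_n)\,dP\le\int_{F_1\cup F_2}g(S_n)\,dP,
$$
which is the asserted inequality.

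The step I expect to be the main obstacle is not computational but structural: unlike in the martingale case, $\{g(S_n)\}$ is generally \emph{not} a demisubmartingale when $g$ is not monotone (take $g(x)=|x|^p$), because $f(g(S_1),\dots,g(S_k))$ need not be monotone in the $S_j$'s, so fact (ii) cannot be applied to $g$ itself. The assumptions $g\ge 0$, $g(0)=0$ are precisely what lets one split $g=g_1+g_2$ into nonnegative monotone convex summands, and the disjointness $g_1g_2\equiv 0$ is what makes the last display lossless. The same scheme should carry over to the multiindexed setting of Section 3, with $\{1,\dots,n\}$ replaced by a finite sublattice and the one-step induction replaced by an induction along coordinate directions.
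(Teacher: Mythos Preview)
Your argument is correct. The paper does not give its own proof of this statement---it is quoted as a preliminary result from Wang and Hu \cite{WH2009}---but the decomposition $g=g_1+g_2$ into monotone convex pieces that you use is precisely the device the paper itself employs (following Wang \cite{W2004}) in the proofs of Theorem~\ref{Chownew} and Theorem~\ref{GenWhittle}, so your route is fully in line with the paper's methods and, in all likelihood, with the original Wang--Hu argument as well. One minor point: in your fact (ii) you should take $m'$ to be the right (or left) derivative of $m$, since a convex function need not be differentiable everywhere; the argument then goes through unchanged because the one-sided derivative of a nondecreasing convex function is nonnegative and nondecreasing, exactly as needed.
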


\begin{theorem}
	\label{Wang}Let $S_{1}, S_{2}, \ldots$ be a demimartingale and $g$ be a nonnegative convex function on $\mathbb{R}$ with $g(0)=0$. Suppose $E\left(g\left(S_{n}\right)\right)^{p}<\infty$ for $p\geq 1$ and all $n \geqslant 1$. Then for every $n \geqslant 1$,
	$$
	E\left(\max _{1 \leqslant k \leqslant n} g\left(S_{k}\right)\right)^{p} \leqslant\left(\frac{p}{p-1}\right)^{p} E\left(g\left(S_{n}\right)\right)^{p} \quad\mbox{for}\quad p>1
	$$
	and
	$$
	E\left(\max _{1 \leqslant k \leqslant n} g\left(S_{k}\right)\right) \leq \frac{e}{e-1}(1+E\left( g\left(S_{n}\right)\log^+g\left(S_{n}\right)\right)).
	$$
\end{theorem}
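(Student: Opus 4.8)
The plan is to obtain both bounds from the Doob-type maximal inequality of Theorem \ref{Doob} by the classical layer-cake argument that produces Doob's $L^p$ and $L\log L$ inequalities in the martingale setting. Throughout, write $M_n = \max_{1 \le k \le n} g(S_k)$, a nonnegative random variable with $M_n \le \sum_{k=1}^n g(S_k)$; since each $g(S_k)$ lies in $L^p$, Minkowski's inequality gives $E(M_n^p) < \infty$, a fact that will be needed before any division. The starting point is the identity $E(M_n^p) = \int_0^\infty p\,\varepsilon^{p-1} P(M_n \ge \varepsilon)\,d\varepsilon$, valid for $p \ge 1$. Inserting the bound $P(M_n \ge \varepsilon) \le \varepsilon^{-1} E[g(S_n)\, I(M_n \ge \varepsilon)]$ furnished by Theorem \ref{Doob} and interchanging the $\varepsilon$-integral with the expectation (Tonelli, the integrand being nonnegative; note that $p>1$ makes $\int_0^{M_n}\varepsilon^{p-2}\,d\varepsilon$ finite), one gets
\begin{equation*}
E(M_n^p) \le E\!\left[g(S_n)\int_0^{M_n} p\,\varepsilon^{p-2}\,d\varepsilon\right] = \frac{p}{p-1}\,E\!\left[g(S_n)\,M_n^{p-1}\right].
\end{equation*}

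For $p>1$, apply Hölder's inequality with conjugate exponents $p$ and $p/(p-1)$ to the right-hand side:
\begin{equation*}
E(M_n^p) \le \frac{p}{p-1}\,\big(E[g(S_n)^p]\big)^{1/p}\,\big(E[M_n^p]\big)^{(p-1)/p}.
\end{equation*}
If $E(M_n^p)=0$ the asserted inequality is trivial; otherwise divide both sides by the finite positive quantity $\big(E[M_n^p]\big)^{(p-1)/p}$ and raise to the $p$-th power, which gives $E(M_n^p) \le \big(\tfrac{p}{p-1}\big)^{p} E[g(S_n)^p]$.

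For the logarithmic bound, run the same computation but split the integral at $\varepsilon=1$, bounding $P(M_n\ge\varepsilon)\le 1$ on $(0,1)$ and using Theorem \ref{Doob} on $[1,\infty)$:
\begin{equation*}
E(M_n) \le 1 + \int_1^\infty \varepsilon^{-1} E[g(S_n)\,I(M_n\ge\varepsilon)]\,d\varepsilon = 1 + E\!\left[g(S_n)\log^+ M_n\right].
\end{equation*}
Now use the elementary inequality $a\log^+ b \le a\log^+ a + e^{-1}b$, valid for all $a,b\ge 0$ (checked directly: reduce to the case $b>1$, and then to the bound $\log t \le t/e$), with $a=g(S_n)$ and $b=M_n$, to obtain $E[g(S_n)\log^+ M_n] \le E[g(S_n)\log^+ g(S_n)] + e^{-1}E(M_n)$. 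Since $M_n\le\sum_{k=1}^n g(S_k)\in L^1$, we have $E(M_n)<\infty$, so the term $e^{-1}E(M_n)$ may be transposed to the left-hand side; dividing by $1-e^{-1}$ yields $E(M_n)\le\frac{e}{e-1}\big(1+E[g(S_n)\log^+ g(S_n)]\big)$.

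The substantive ingredients are only Theorem \ref{Doob}, Tonelli's theorem, Hölder's inequality, and the one-line inequality $a\log^+b\le a\log^+a+e^{-1}b$; there is no genuine obstacle, but care is required with the finiteness bookkeeping, since dividing through in the $L^p$ step and transposing a term in the $L\log L$ step both presuppose that the relevant maximal moment is finite — which is exactly why the crude domination $M_n\le\sum_k g(S_k)$ is recorded at the outset, and which is where the hypothesis $E(g(S_n))^p<\infty$ (and hence $g(S_k)\in L^1$) enters.
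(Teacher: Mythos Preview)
Your proof is correct and follows exactly the route the paper indicates: the paper does not give its own proof but cites the result from \cite{WH2009} and remarks that it ``is a direct consequence of the Doob's inequality'' (Theorem~\ref{Doob}), which is precisely what you establish via the standard layer-cake/H\"older and $L\log L$ arguments. Your bookkeeping on the finiteness of $E(M_n^p)$ before dividing is appropriate and the elementary inequality $a\log^+ b\le a\log^+ a+e^{-1}b$ is applied correctly.
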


\noindent The next lemma will be used in the proof of Theorem \ref{theorem1}.

\begin{lemma}
	Let $f(x_1,x_2,\ldots,x_n) = \max\{x_1,x_2,\ldots,x_n\}$. The function $f$ is componentwise convex and its right derivative with respect to its $i$-th component for $i=1,2,\ldots, n$ is a nonnegative constant.
\end{lemma}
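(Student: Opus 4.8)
The plan is to reduce the whole statement to a one-variable computation. Fix an index $i\in\{1,\dots,n\}$, freeze the remaining coordinates $x_j$ ($j\neq i$), and put $M:=\max_{j\neq i}x_j$. Then the section function
$$
\varphi(t):=f(x_1,\dots,x_{i-1},t,x_{i+1},\dots,x_n)=\max\{t,M\}
$$
is exactly the pointwise maximum of the two affine (hence convex) maps $t\mapsto t$ and $t\mapsto M$. Since the pointwise maximum of finitely many convex functions is convex, $\varphi$ is convex; as $i$ and the frozen coordinates were arbitrary, this is precisely componentwise convexity of $f$. If one prefers not to quote that fact, the midpoint inequality $\max\{\tfrac{s+t}{2},M\}\le\tfrac12\max\{s,M\}+\tfrac12\max\{t,M\}$ follows at once from $\tfrac{s+t}{2}\le\tfrac12\max\{s,M\}+\tfrac12\max\{t,M\}$ and $M\le\tfrac12\max\{s,M\}+\tfrac12\max\{t,M\}$, and convexity of $\varphi$ follows from midpoint convexity together with continuity.

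For the statement on the right derivative I would simply compute it from the explicit form $\varphi(t)=M$ for $t\le M$ and $\varphi(t)=t$ for $t\ge M$. For $\delta>0$ the difference quotient $\bigl(\varphi(t+\delta)-\varphi(t)\bigr)/\delta$ equals $0$ once $\delta$ is small enough when $t<M$, and equals $1$ when $t\ge M$ (at the kink $t=M$ one gets $\bigl((M+\delta)-M\bigr)/\delta=1$). Letting $\delta\downarrow0$ shows that the right-hand partial derivative of $f$ in its $i$-th variable exists at every point and equals $1$ when $x_i\ge\max_{j\neq i}x_j$ and $0$ otherwise. In particular it is nonnegative, never exceeds the constant $1$, and on each of the two regions $\{x_i>\max_{j\neq i}x_j\}$ and $\{x_i<\max_{j\neq i}x_j\}$ it is constant (equal to $1$ and $0$ respectively), which is the asserted property.

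I do not expect a genuine obstacle: the only point requiring care is that $\max$ fails to be differentiable at the kink $t=M$, which is exactly why the lemma is phrased in terms of the \emph{right} derivative, and the one-line difference-quotient computation above disposes of that case. I would close by recording the use to which the lemma will be put: applying it with $x_k=g(S_k)$ for a (multiindexed) demimartingale $\{S_k\}$ and a nonnegative convex $g$ with $g(0)=0$ rewrites $\max_k g(S_k)$ as $f\bigl(g(S_1),\dots,g(S_n)\bigr)$ with $f$ componentwise convex, componentwise nondecreasing, and with right derivatives bounded by $1$ — precisely the structure needed to push the single-indexed Doob-type argument through to the multiindexed setting in the next section.
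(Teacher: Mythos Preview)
Your argument is correct and follows essentially the same route as the paper: both reduce to the one-variable section $\varphi(t)=\max\{t,M\}$ with $M=\max_{j\neq i}x_j$ and compute the right difference quotient, obtaining the value $0$ when $t<M$ and $1$ when $t\ge M$. Your write-up is in fact slightly more complete, since you explicitly dispose of the kink case $t=M$, which the paper's proof leaves unaddressed.
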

\begin{proof}
	Note that $f$ is convex and hence it is also componentwise convex. Without loss of generality, we will calculate its right derivative with respect to its last component. Define $g(t) = f_{+}^{'}(x_1,x_2,\ldots,x_{n-1},t)$ where $f^{'}_{+}$ denotes the right derivative of the function $f$.Then,
	\begin{align*}
	&g(t) = \lim_{h\to 0^+}\frac{f(x_1,x_2,\ldots,x_{n-1},t+h)-f(x_1,x_2,\ldots,x_{n-1},t)}{h} \\
	&= \lim_{h\to 0^+}\frac{\max\{x_1,x_2,\ldots,x_{n-1},t+h\}-\max\{x_1,x_2,\ldots,x_{n-1},t\}}{h}\\
	& = \lim_{h\to 0^+}\frac{\max\{\max\{x_1,x_2,\ldots,x_{n-1}\},t+h\}-\max\{\max\{x_1,x_2,\ldots,x_{n-1}\},t\}}{h}.
	\end{align*}
	Suppose that  $\max\{x_1,x_2,\ldots,x_{n-1}\} = x_j$ for some $j=1,2,\ldots,n-1$. Then 
	\[
	g(t)  = \lim_{h\to 0^+}\frac{\max\{x_j,t+h\}-\max\{x_j,t\}}{h}.
	\]
	First, consider the case where $x_j >t$. Then, $h$ can be chosen sufficiently small such that $x_j > t+h$ and therefore $g(t) = 0$. For the case where $x_j<t$ and since $h>0$, we also have that $x_j<t+h$. Thus, $g(t) = 1$ which concludes the proof.	
\end{proof}

\section{Doob-type inequalities and related results}
We provide a Doob-type inequality for positive multiindexed demimartingales. The result is inspired by the work of Cairoli in \cite{CR1970} for multidimensionally indexed submartingales.

\begin{theorem}
	\label{theorem1}	Let $\left\{S_{\mathbf{n}}, \mathbf{n} \in \mathbb{N}^{k}\right\}$, be a positive multidimensionally indexed demimartingale. Then,
	\begin{enumerate}
		\item [a. ] For $p>1$ and $ES_{\mathbf{n}}<\infty$,
		$$E\left( \displaystyle \max_{\mathbf{i} \leq \mathbf{n}} S_{\mathbf{i}}\right)^p \leq \left(\frac{p}{p-1}\right)^{kp}E\left( S_{\mathbf{n}}\right)^p.$$
		\item [b. ] For any $\epsilon>0$ and $E\left( S_{\mathbf{n}}\left(\log^{+} S_{\mathbf{n}}\right)^k\right)<\infty$,
		$$ \epsilon P\left ( \displaystyle \max_{\mathbf{i} \leq \mathbf{n}} S_{\mathbf{i}} \geq \epsilon\right) \leq \sum_{i=1}^{k} (i-1)!A^i + k! A^k E\left( S_{\mathbf{n}}\left(\log^{+} S_{\mathbf{n}}\right)^k\right)$$
		where $A = \frac{e}{e-1}$.
	\end{enumerate}
\end{theorem}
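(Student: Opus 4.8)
The plan is to prove both parts by reducing, one index at a time, to the one-dimensional statements of Theorems~\ref{Doob} and~\ref{Wang}, in the spirit of Cairoli's iteration for multiparameter submartingales. Two structural facts drive this. (i) If $\{S_{\mathbf n}\}$ is a positive multiindexed demi(sub)martingale and we freeze all coordinates but one, the resulting single-indexed process is a positive demi(sub)martingale; this is immediate from the defining inequality upon choosing test functions that depend only on the unfrozen coordinate. (ii) For fixed $m$, the partial maximum $W_{\mathbf i'}:=\max_{i_1\le m}S_{(i_1,\mathbf i')}$, indexed by $\mathbf i'\in\mathbb N^{k-1}$, is again a positive multiindexed demisubmartingale; here the preceding Lemma is used, since the componentwise convexity of $\mathbf x\mapsto\max_j x_j$ and the nonnegativity of its right partial derivatives are what allow an increment of $W$ to be dominated below by a nonnegative combination of increments of $S$ paired with admissible (nonnegative, componentwise nondecreasing) test functions.

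For part (a) I would induct on $k$. When $k=1$ the assertion is Theorem~\ref{Wang} with the convex function $g(x)=x^+$ (so $g(S_i)=S_i$ since $S_i>0$); if $E(S_{\mathbf n})^p=\infty$ there is nothing to prove, so one may assume it finite. For the inductive step put $\mathbf n'=(n_2,\dots,n_k)$ and write $\max_{\mathbf i\le\mathbf n}S_{\mathbf i}=\max_{\mathbf i'\le\mathbf n'}W_{\mathbf i'}$; applying the $(k-1)$-dimensional case to the positive multiindexed demisubmartingale $\{W_{\mathbf i'}\}$ produces the factor $(p/(p-1))^{(k-1)p}$ and leaves $E(W_{\mathbf n'})^p=E(\max_{i_1\le n_1}S_{(i_1,\mathbf n')})^p$, to which the one-dimensional $L^p$-inequality for the positive demi(sub)martingale $\{S_{(i_1,\mathbf n')}\}_{i_1}$ contributes the remaining factor $(p/(p-1))^p$. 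The statement, and hence the one-dimensional ingredients, should be set up for positive demisubmartingales so that the inductive hypothesis applies to $\{W_{\mathbf i'}\}$; the needed one-dimensional inequalities hold in that generality by the same arguments.

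Part (b) runs on the $L\log L$ half of Theorem~\ref{Wang}. The one-variable input I would isolate is: for a positive single-indexed demi(sub)martingale $\{S_l\}_{l\le n}$ and each integer $r\ge0$,
\[
E\Big(\max_{l\le n}S_l\,\big(\log^+\max_{l\le n}S_l\big)^r\Big)\ \le\ A+(r+1)\,A\,E\big(S_n(\log^+S_n)^{r+1}\big),
\]
obtained by feeding the convex function $g_r(x)=x(\log^+x)^r$ (extended by $0$ on $(-\infty,1]$, so $g_r(0)=0$) into Theorem~\ref{Wang}, using that $g_r$ is nondecreasing so $\max_l g_r(S_l)=g_r(\max_l S_l)$, together with the elementary bound $\log^+\!\big(x(\log^+x)^r\big)\le(r+1)\log^+x$. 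Iterating over the $k$ indices exactly as in part~(a) — peeling one index with the displayed inequality, the exponent on $\log^+$ rising by one each time, and passing through the partial-maximum demisubmartingales at the other indices — gives, for every $r\ge0$,
\[
E\Big(\max_{\mathbf i\le\mathbf n}S_{\mathbf i}\,\big(\log^+\max_{\mathbf i\le\mathbf n}S_{\mathbf i}\big)^r\Big)\ \le\ \sum_{i=1}^{k}\frac{(r+i-1)!}{r!}A^i+\frac{(r+k)!}{r!}A^k\,E\big(S_{\mathbf n}(\log^+S_{\mathbf n})^{r+k}\big);
\]
the induction closes because the term newly created at each step, $\tfrac{(r+k-1)!}{r!}A^k$, is exactly the missing $i=k$ summand. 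Taking $r=0$ and combining with Markov's inequality $\epsilon P(\max_{\mathbf i\le\mathbf n}S_{\mathbf i}\ge\epsilon)\le E(\max_{\mathbf i\le\mathbf n}S_{\mathbf i})$ yields the stated bound, with $A=e/(e-1)$.

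I expect the main obstacle to be fact (ii): showing that a partial maximum over one index is still a multiindexed demisubmartingale in the remaining indices. The naive route — subgradient inequality for the maximum, then conditioning on the location of the argmax — produces test functions that depend on coordinates not lying below the base point of the increment, hence inadmissible in the demi(sub)martingale inequality; arranging the argument so that only admissible test functions occur is the delicate point, and this is precisely where the Lemma on the componentwise convexity and the right derivatives of the maximum is needed. Once (ii) is in hand, the remaining work — the two one-dimensional lemmas and the bookkeeping that collapses the iterated constants into the factorial form above — is routine.
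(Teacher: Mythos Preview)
Your plan coincides with the paper's: induct on $k$, peel one coordinate at a time by showing that the partial maximum $Y_{\mathbf s}=\max_{1\le i\le n_1}S_{i\mathbf s}$ is a $(k{-}1)$-indexed demi(sub)martingale, and invoke Theorem~\ref{Wang} at each step. For part~(b) you thread an auxiliary exponent $r$ through the induction, whereas the paper iterates the $L\log L$ bound directly and applies Theorem~\ref{Wang} to the single-indexed demisubmartingale $Z_i=S_{i\mathbf n^{*}}(\log^{+}S_{i\mathbf n^{*}})^{k-1}$; both routes rest on $\log^{+}\!\bigl(x(\log^{+}x)^{r}\bigr)\le(r{+}1)\log^{+}x$ and produce the same constants, so this is a cosmetic difference.

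On fact~(ii), the paper does exactly what you label the ``naive route'': from convexity of the maximum it writes
\[
Y_{j+1}-Y_j\ \ge\ \sum_{i=1}^{n_1}(S_{i,j+1}-S_{ij})\,g'_{i+}(S_{1j},\dots,S_{n_1 j}),
\]
multiplies by $f(Y_1,\dots,Y_j)$, and then appeals termwise to the demimartingale property, the Lemma supplying that each $g'_{i+}\in\{0,1\}$. Your admissibility worry is on point: for $i<n_1$ the factor $g'_{i+}(S_{1j},\dots,S_{n_1 j})\,f(Y_1,\dots,Y_j)$ depends on $S_{kj}$ with $k>i$ (and is not componentwise monotone in those variables), so it is not, on its face, a function of $\{S_{\mathbf k}:\mathbf k\le(i,j)\}$ as the definition requires. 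The paper does not discuss this point further. In short, your proposal is neither more nor less complete than the paper's own argument here; you have correctly located the crux, and the paper resolves it in precisely the way you anticipated rather than by some alternative device.
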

\begin{proof}
	We start with the first inequality. The proof will follow by induction; we discuss first the case for $k=2$ i.e. the case of a positive 2-indexed demimartingale $\{S_{ij}, (i,j) \in\mathbb{N}^{2} \}$. For $p >1$, 
	\begin{equation}
	\label{moment1}	E\left(\max_{(i,j)\leq (n_1,n_2)}S_{ij}\right)^p  = E\left(\max_{1\leq j\leq n_2}\max_{1\leq i\leq n_1}S_{ij}\right)^p  = E\left(\max_{1\leq j\leq n_2}Y_j\right)^p
	\end{equation}
	where $Y_j = \displaystyle\max_{1\leq i\leq n_1}S_{ij}$ for $j=1,2,\ldots,n_2$. Let $f$ be a componentwise nondecreasing function. Then, 
	\begin{align*}
	&E[(Y_{j+1}-Y_j)f(Y_1,\ldots,Y_j)] = E\left[ \left(  \max_{1\leq i\leq n_1}S_{ij+1}  -\max_{1\leq i\leq n_1}S_{ij}      \right) f\left(\max_{1\leq i\leq n_1}S_{i1},\max_{1\leq i\leq n_1}S_{i2}\ldots, \max_{1\leq i\leq n_1}S_{ij}\right)\right]\\
	&\geq E\left(\sum_{i=1}^{n_1}(S_{ij+1}-S_{ij})g^{'}_{i+}(S_{1j},\ldots,S_{n_1j})    f\left(\max_{1\leq i\leq n_1}S_{i1},\max_{1\leq i\leq n_1}S_{i2}\ldots, \max_{1\leq i\leq n_1}S_{ij}\right)      \right)\\
	&=\sum_{i=1}^{n_1}E\left((S_{ij+1}-S_{ij})g^{'}_{i+}(S_{1j},\ldots,S_{n_1j})    f\left(\max_{1\leq i\leq n_1}S_{i1},\max_{1\leq i\leq n_1}S_{i2}\ldots, \max_{1\leq i\leq n_1}S_{ij}\right)      \right)\\
	&\geq 0
	\end{align*}
	where $g^{'}_{i+}$ denotes the right derivative of the function $g(x_1,\ldots,x_n) = \max\{x_1,\ldots,x_n\}$ with respect to its $i$-th component which is a nonnegative constant based on the previous lemma. The first inequality is due to the convexity of the maximum function while the last inequality is due to the demimartingale property of the sequence $\{S_{ij}, 1\leq j\leq n_2\}$. Thus, the sequence $Y_j = \displaystyle\max_{1\leq i\leq n_1}S_{ij}$ forms a demimartingale sequence. By combining the latter result with \eqref{moment1} and Theorem \ref{Wang} we have that
	\begin{align*}
	&	E\left(\max_{(i,j)\leq (n_1,n_2)}S_{ij}\right)^p  \leq \left( \frac{p}{p-1}\right)^pE(Y_{n_2})^p=  \left( \frac{p}{p-1}\right)^p E \left(\max_{1\leq i\leq n_1} S_{in_2}\right)^p \leq \left( \frac{p}{p-1}\right)^{2p}E(S_{n_1n_2})^p.
	\end{align*}
	Note that for obtaining the last inequality we applied Theorem \ref{Wang} again since $\{S_{in_2}, 1\leq i\leq n_1\}$ forms a single indexed demimartingale. We assume that the statement is true for $k-1$ and we consider a $k$-indexed demimartingale $\{S_{\mathbf{n}}, \mathbf{n}\in\mathbb{N}^{k}\}$. Let $(i,i_2,\ldots,i_k)=(i,\mathbf{s})$ where $\mathbf{s} = (i_2, \ldots,i_{k})$. Then, for a fixed $i, \, 1\leq i\leq n_1$, the sequence $\{S_{i\mathbf{s}}, \mathbf{s}\in \mathbb{N}^{k-1}\}$ forms a $(k-1)$-indexed demimartingale. Then,
	\begin{align*}
	&E\left( \max_{\mathbf{i} \leq \mathbf{n}} S_{\mathbf{i}}\right)^p  = E\left( \max_{\mathbf{s}\leq \mathbf{n^{*}}}\max_{1\leq i\leq n_1}S_{i\mathbf{s}}\right)^p = E\left( \max_{\mathbf{s}\leq \mathbf{n^{*}}}Y_{\mathbf{s}}\right)^p
	\end{align*}
	where $\mathbf{n^*} = (n_2, \ldots,n_k)$.
	By applying similar arguments as the ones for the case where $k=2$, we can prove that $Y_{\mathbf{s}}$ is a $(k-1)$-indexed demimartingale.  By the induction hypothesis and the result of Theorem \ref{Wang} we have that 
	\begin{align*}
	&E\left( \max_{\mathbf{i} \leq \mathbf{n}} S_{\mathbf{i}}\right)^p \leq \left( \frac{p}{p-1}\right)^{(k-1)p}E\left( \max_{1\leq i\leq n_1} S_{in_2\ldots n_k}\right)^p
	\leq \left (\frac{p}{p-1}\right)^{kp}E\left( S_{\mathbf{n}}\right)^p
	\end{align*}
	since $\{S_{in_2\ldots n_k}, 1\leq i\leq n_1\}$ forms a single indexed demisubmartingale.
	
	\medskip	
	
	\noindent For the second inequality, we start again with the case $k=2$. For any $\epsilon>0$,
	\begin{align*}
	\epsilon P\left( \max_{(i,j)\leq (n_1, n_2)} S_{ij} \geq \epsilon \right) \leq E\left( \max_{(i,j)\leq (n_1, n_2)} S_{ij}\right) = E\left( \max_{1\leq j\leq n_2} Y_j\right)
	\end{align*}
	where $Y_j = \displaystyle\max_{1\leq i\leq n_1}S_{ij}$. Recall that this is a demimartingale sequence and by employing the second inequality in Theorem \ref{Wang} we have	
	\begin{align*}
	& \epsilon P\left( \max_{(i,j)\leq (n_1, n_2)} S_{ij} \geq \epsilon \right) \leq E\left( \max_{(i,j)\leq (n_1, n_2)} S_{ij}\right) = E\left( \max_{1\leq j\leq n_2} Y_j\right)\\
	&\leq \frac{e}{e-1} +\frac{e}{e-1}E(Y_{n_2}\log^{+}Y_{n_2}) = A+AE\left( \max_{1\leq i\leq n_1}S_{in_2}\log^{+}\max_{1\leq i\leq n_1}S_{in_2}\right)\\
	&=A+A E\left( \max_{1\leq i\leq n_1}(S_{in_2}\log^{+}S_{in_2})\right)
	\end{align*}
	for $A = \frac{e}{e-1}$. Observe that the function $x\log^{+}x$ is nondecreasing convex and hence $(Z_i)_{i\geq 1} = (S_{in_2}\log^{+}S_{in_2})_{i\geq 1}$ is a single indexed demisubmartingale (Lemma 2.1 in \cite{C2000}). Then, 
	\begin{align*}
	& \epsilon P\left( \max_{(i,j)\leq (n_1, n_2)} S_{ij} \geq \epsilon \right) \leq A+A(A + AEZ_{n_1}\log^{+}Z_{n_1}) \\ &= A+A^2 +A^2 E\left( S_{n_1n_2}\log^{+}S_{n_1n_2}\log^{+}(S_{n_1n_2}\log^{+}S_{n_1n_2})\right)\\
	&\leq A+A^2+2A^2E(S_{n_1n_2}(\log^{+}S_{n_1n_2})^2)
	\end{align*}
	where the last inequality follows from the fact that for any $x>0$, $\log^{+}(x\log^{+}x) \leq 2\log x$. We assume that the statement is true for $k-1$  and we will prove its validity for $k$. Following the same notation as in the first part, for any $\epsilon >0$,
	\begin{align}
	&	\nonumber \epsilon P\left(  \max_{\mathbf{i} \leq \mathbf{n}} S_{\mathbf{i}} \geq \epsilon\right) \leq E\left( \max_{\mathbf{s}\leq \mathbf{n^{*}}}\max_{1\leq i\leq n_1}S_{i\mathbf{s}}\right) = E\left( \max_{\mathbf{s}\leq \mathbf{n^{*}}}Y_{\mathbf{s}}\right)\\
	&\label{ind}\leq A+\sum_{i=2}^{k-1}(i-1)!A^{i} +(k-1)!A^{k-1}E(Y_{\mathbf{n^{*}}}(\log^{+}Y_{\mathbf{n^{*}}})^{k-1})
	\end{align}
	due to the induction hypothesis. Following similar steps as before we have that
	\begin{align*}
	&E\left(Y_{\mathbf{n^{*}}}(\log^{+}Y_{\mathbf{n^{*}}})^{k-1}\right) = E\left( \max_{1\leq i\leq n_1}S_{i\mathbf{n^*}}(\log^{+}\max_{1\leq i\leq n_1}S_{i\mathbf{n^*}})^{k-1}\right)=E\left( \max_{1\leq i\leq n_1}S_{i\mathbf{n^*}}(\log^{+}S_{i\mathbf{n^*}})^{k-1}\right)\\
	&\leq A + AE Z_{n_1}\log^{+}Z_{n_1} = A+AE\left(S_{\mathbf{n}}(\log^{+}S_{\mathbf{n}})^{k-1}\log^{+}\left(S_{\mathbf{n}}(\log^{+}S_{\mathbf{n}})^{k-1} \right)\right)\\
	&\leq A+AkE(S_{\mathbf{n}}(\log^{+}S_{\mathbf{n}})^{k}).
	\end{align*}
	Note that the first inequality is due to the fact that $(Z_ i)_{i\geq 1} = (S_{i\mathbf{n^*}}(\log^{+}S_{i\mathbf{n^*}})^{k-1})_{i\geq 1}$ forms a single indexed demisubmartingale while for the second one the inequality $\log^{+}(x(\log^{+}x)^{k-1}) \leq k\log^{+}x, \, x>0$ is used. The latter expression together with \eqref{ind} lead to the desired result.
\end{proof}

\medskip

\medskip

Theorem \ref{theorem1} was obtained by following the ideas of the corresponding result for positive submartingales proved by Cairoli \cite{CR1970}. In his paper, Cairoli provided counterexamples showing that some classical inequalities for maximums of submartingales with discrete time are not valid in the case of submartingales with multi-dimensional time, including Doob's inequality. Despite Cairoli's counterexample, Doob's type inequality indeed has an extension to the case of multidimensional index as proved in \cite{CS1990} (see Corollary 2.4 there). Moreover, under specific conditions, the well-known Doob's inequality is also valid for a subclass of multiindexed submartingales (see for example Proposition 1.6 in \cite{W1986}). Although, in Theorem \ref{theorem1}, we proved that Cairoli's inequalities for multiindexed martingales are also valid for multiindexed demimartingales, it is of interest to check whether the classical Doob's inequality can be obtained for multiindexed demimartingales. We start by defining the concept of multidimensional rank orders.

\medskip

\noindent Let $\mathbf{n} = (n_1 \, n_2 \, \ldots \, n_k)$. We define the multiindexed rank orders $R_{\mathbf{n}}^{(j)}$ by
\[
R_{\mathbf{n}}^{(j)} = \begin{cases}
j\mbox{-th largest of } \{S_\mathbf{m}, \mathbf{m}\leq \mathbf{n}\} &\, \mbox{for} \, j\leq \prod_{i=1}^{k} n_i\\
\displaystyle\min_{\mathbf{m}\leq \mathbf{n}}S_\mathbf{m} & \, \mbox{for} \, j> \prod_{i=1}^{k} n_i.
\end{cases}
\]
The theorem that follows generalizes to the case of multiindexed demi(sub)martingales a result for single- indexed demjmartingles which can be found in \cite{NW1982}.

\begin{theorem}
	\label{Doob_new}Let $\left\{S_{\mathbf{n}}, \mathbf{n} \in \mathbb{N}^{k}\right\}$, be a multiindexed demi(sub)martingale with $S_{\boldsymbol{\ell}}=0$ when $\prod_{i=1}^{k} \ell_{i}=0$ and let $g$ be a (nonnegative) nondecreasing function on $\mathbb{R}$ with $g(0) = 0$. Then for any $\mathbf{n}$ and any $j$,
	\begin{equation}
	\label{NW1}E\left[ \int_{0}^{R_{\mathbf{n}}^{(j)}} u{\rm d}g(u)\right] \leq E(S_\mathbf{n}g(R_{\mathbf{n}}^{(j)}))
	\end{equation}
	and for any $\epsilon>0$,
	\begin{equation}
	\label{NW2} \epsilon P\left( R_{\mathbf{n}}^{(j)} \geq \epsilon\right) \leq E\left(S_\mathbf{n}I\left(R_{\mathbf{n}}^{(j)}\geq \epsilon\right)\right).
	\end{equation}
\end{theorem}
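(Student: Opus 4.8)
The plan is to follow the one-dimensional argument of Newman and Wright and to reduce everything to \eqref{NW1}. Note first that \eqref{NW2} is the instance of \eqref{NW1} with $g(u)=I(u\ge\epsilon)$: then $dg$ is the unit mass at $\epsilon$, so $\int_0^{R_{\mathbf n}^{(j)}}u\,dg(u)=\epsilon\,I(R_{\mathbf n}^{(j)}\ge\epsilon)$ and $g(R_{\mathbf n}^{(j)})=I(R_{\mathbf n}^{(j)}\ge\epsilon)$, which turns \eqref{NW1} into \eqref{NW2} (if continuity of $g$ is insisted upon, approximate $I_{[\epsilon,\infty)}$ from below by continuous nondecreasing functions vanishing at the origin and let them increase, using monotone convergence). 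So it suffices to establish \eqref{NW1}; the range $j>\prod_i n_i$, where $R_{\mathbf n}^{(j)}=\min_{\mathbf m\le\mathbf n}S_{\mathbf m}$, I would treat separately by running the same scheme on the running minimum.

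For \eqref{NW1}, fix $\mathbf n$, set $N=\prod_i n_i$, and enumerate the index set $\{\mathbf m\le\mathbf n\}$ as $\mathbf m_1,\dots,\mathbf m_N$ along a linear extension of the coordinatewise order, so that each initial segment $D_\ell=\{\mathbf m_1,\dots,\mathbf m_\ell\}$ is a down-set and $\mathbf m_N=\mathbf n$ (the unique maximal element). Using the convention $S_{\boldsymbol\ell}=0$ at the boundary to keep the running rank orders nonnegative and to start the telescoping from zero, let $R_{(\ell)}^{(j)}$ be the $j$-th largest among $\{S_{\mathbf m}:\mathbf m\in D_\ell\}$, so that $R_{(\ell)}^{(j)}$ is nondecreasing in $\ell$ and $R_{(N)}^{(j)}=R_{\mathbf n}^{(j)}$. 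The one combinatorial input — valid because only one index is adjoined per step — is that on $\{R_{(\ell)}^{(j)}>R_{(\ell-1)}^{(j)}\}$ one has $R_{(\ell)}^{(j)}\le S_{\mathbf m_\ell}$ (inserting a single value can raise the $j$-th order statistic only by that value entering the top $j$), whence $\int_{R_{(\ell-1)}^{(j)}}^{R_{(\ell)}^{(j)}}u\,dg(u)\le S_{\mathbf m_\ell}\bigl(g(R_{(\ell)}^{(j)})-g(R_{(\ell-1)}^{(j)})\bigr)$, both sides vanishing on the complement. Summing over $\ell$ and using summation by parts (with $g$ at the initial value equal to $g(0)=0$) yields pointwise
\[
\int_0^{R_{\mathbf n}^{(j)}}u\,dg(u)\ \le\ S_{\mathbf n}\,g\bigl(R_{\mathbf n}^{(j)}\bigr)\ -\ \sum_{\ell=1}^{N-1}\bigl(S_{\mathbf m_{\ell+1}}-S_{\mathbf m_\ell}\bigr)\,g\bigl(R_{(\ell)}^{(j)}\bigr),
\]
and upon taking expectations it remains to show that $E\bigl[(S_{\mathbf m_{\ell+1}}-S_{\mathbf m_\ell})\,g(R_{(\ell)}^{(j)})\bigr]\ge0$ for every $\ell$; here one invokes that $g$ is nondecreasing and the $j$-th order statistic is componentwise nondecreasing, so $g(R_{(\ell)}^{(j)})$ is a componentwise nondecreasing (and, in the demisubmartingale case, nonnegative) function of $\{S_{\mathbf m}:\mathbf m\in D_\ell\}$, and one combines this with the defining inequality of a multiindexed demi(sub)martingale.

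I expect that last step to be the main obstacle. For a single index it would be immediate, since there $\mathbf m_\ell\le\mathbf m_{\ell+1}$ and $g(R_{(\ell)}^{(j)})$ depends only on $S_{\mathbf m_1},\dots,S_{\mathbf m_\ell}$; but for a genuinely multidimensional index neither feature survives a linear extension — consecutive lattice points need not be comparable, and $D_\ell$ need not be contained in $\{\mathbf k:\mathbf k\le\mathbf m_\ell\}$, and jointly these two requirements would force the poset to be a chain. To get past this I would not argue termwise but trade the stepwise increment for a comparison with $\mathbf n$, which dominates the whole index set: rewrite the remainder as $\sum_{\ell}E\bigl[(S_{\mathbf n}-S_{\mathbf m_\ell})\bigl(g(R_{(\ell)}^{(j)})-g(R_{(\ell-1)}^{(j)})\bigr)\bigr]$, route $S_{\mathbf n}-S_{\bigvee D_\ell}$ up a saturated chain above the coordinatewise supremum $\bigvee D_\ell$ (along which $g(R_{(\ell)}^{(j)})$ is a nondecreasing function of the relevant past), and control the discrepancy between $\mathbf m_\ell$ and $\bigvee D_\ell$ by a further summation by parts. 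A natural alternative is the dimension induction used in the proof of Theorem \ref{theorem1}: replacing the first coordinate by its partial maxima gives a $(k-1)$-indexed demi(sub)martingale, which combined with the one-dimensional Newman-Wright inequality would settle the maximal case $j=1$ (i.e.\ Doob's inequality itself), with general order statistics then bootstrapped on top. The remaining items — legitimacy of the first telescoping step via the boundary condition, measurability and integrability of the $R_{(\ell)}^{(j)}$, and the degenerate range $j>\prod_i n_i$ — are routine.
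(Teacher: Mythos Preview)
Your overall architecture is right and coincides with the paper's: an Abel-summation identity for $S_{\mathbf n}\,g(R_{\mathbf n}^{(j)})$, a pointwise bound on the main sum, the demi(sub)martingale inequality on the remainder, and the specialisation $g(u)=I(u\ge\epsilon)$ to pass from \eqref{NW1} to \eqref{NW2}. The divergence --- and the gap in your argument --- is the choice of telescoping path. You enumerate all of $\{\mathbf m\le\mathbf n\}$ along a linear extension; this preserves the one-new-value-per-step combinatorics (hence your $R_{(\ell)}^{(j)}\le S_{\mathbf m_\ell}$ on the jump set), but, as you correctly diagnose, it destroys both comparability of consecutive indices and the inclusion $D_\ell\subseteq\{\mathbf k:\mathbf k\le\mathbf m_\ell\}$, so the defining inequality cannot be applied termwise to the remainder. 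Your proposed fix does not close this: in the rewritten remainder $\sum_\ell E\bigl[(S_{\mathbf n}-S_{\mathbf m_\ell})\bigl(g(R_{(\ell)}^{(j)})-g(R_{(\ell-1)}^{(j)})\bigr)\bigr]$ the second factor is nonnegative but is a \emph{difference} of componentwise nondecreasing functions and hence not itself componentwise nondecreasing, so neither the demimartingale nor the demisubmartingale inequality applies to it; routing through a chain above $\bigvee D_\ell$ does not cure this monotonicity failure. The dimension-induction alternative you mention handles only $j=1$.

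The paper sidesteps the obstacle entirely by telescoping in a \emph{single} coordinate rather than over the whole lattice. With $\mathbf n^*=(n_1,\dots,n_{k-1})$ it sets $Y_i=R^{(j)}_{\mathbf n^*,i}$ for $1\le i\le n_k$, $Y_0=0$, and writes
\[
S_{\mathbf n}\,g\bigl(R^{(j)}_{\mathbf n}\bigr)=\sum_{i=0}^{n_k-1}S_{\mathbf n^*,i+1}\bigl(g(Y_{i+1})-g(Y_i)\bigr)+\sum_{i=1}^{n_k-1}\bigl(S_{\mathbf n^*,i+1}-S_{\mathbf n^*,i}\bigr)\,g(Y_i).
\]
Now the consecutive indices $(\mathbf n^*,i)\le(\mathbf n^*,i+1)$ \emph{are} comparable, and $g(Y_i)$ \emph{is} a (nonnegative) componentwise nondecreasing function of $\{S_{\mathbf m}:\mathbf m\le(\mathbf n^*,i)\}$, so the multiindexed demi(sub)martingale definition gives $E\bigl[(S_{\mathbf n^*,i+1}-S_{\mathbf n^*,i})\,g(Y_i)\bigr]\ge0$ directly --- no linear extension, no $\bigvee D_\ell$, no secondary summation by parts. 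The first sum is then handled pointwise via the comparison of $S_{\mathbf n^*,i+1}$ with $Y_{i+1}$ on $\{Y_{i+1}>Y_i\}$, yielding $S_{\mathbf n^*,i+1}(g(Y_{i+1})-g(Y_i))\ge\int_{Y_i}^{Y_{i+1}}u\,dg(u)$ and hence \eqref{NW1}. In short, the missing idea is that freezing $k-1$ coordinates at their terminal values lets the multiindexed problem be run as a one-parameter Newman--Wright argument without ever leaving the scope of the defining inequality.
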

\begin{proof}
	For fixed $j, n_1, n_2, \ldots, n_{k-1}$ we set 
	\[
	Y_i = R^{(j)}_{n_1n_2\cdots n_{k-1}i} \quad\mbox{for}\quad1\leq i\leq n_k
	\]
	with $Y_0 = 0$. First observe that
	\[
	S_{\mathbf{n}}g(R^{(j)}_{\mathbf{n}}) = \sum_{i=0}^{n_k-1}S_{\mathbf{n}^*,i+1}(g(Y_{i+1})-g(Y_{i}))+\sum_{i=1}^{n_k-1}(S_{\mathbf{n}^*,i+1}-S_{\mathbf{n}^*,i})g(Y_i)
	\]
	where $\mathbf{n}^* = (n_1 \, n_2\, \ldots\, n_{k-1}) $ and $S_{n^*,i} = S_{n_1 \, n_2\, \ldots\, n_{k-1} \,i}$.
	We want to prove that 
	\begin{equation}
	\label{ineq4}S_{\mathbf{n}^*,i+1}(g(Y_{i+1})-g(Y_{i}))\geq Y_{i+1}(g(Y_{i+1})-g(Y_{i})).
	\end{equation} 
	Note that 
	\[
	Y_{i+1}\geq Y_{i}
	\]
	where the non-degenerate case of \eqref{ineq4} is the one for which $Y_{i+1} = S_{\mathbf{n}^*,i+1}$ for $j=1$  while $Y_{i+1} > S_{\mathbf{n}^*,i+1}$ for any $j>1$ and  by taking into account the monotonicity of $g$, \eqref{ineq4} holds true for any $i$. Moreover, for any $i$,
	\[
	S_{\mathbf{n}^*,i+1}(g(Y_{i+1})-g(Y_{i}))\geq Y_{i+1}(g(Y_{i+1})-g(Y_{i}))\geq \int_{Y_i}^{Y_{i+1}}u{\rm d}g(u)
	\]
	which leads to 
	\[
	S_{\mathbf{n}}g(R^{(j)}_{\mathbf{n}})  \geq \int_{0}^{R^{(j)}_{\mathbf{n}}}u{\rm d}g(u)+\sum_{i=1}^{n_k-1}(S_{\mathbf{n}^*,i+1}-S_{\mathbf{n}^*,i})g(Y_i)
	\]
	By taking expectations on both sides we have that
	\[
	E[	S_{\mathbf{n}}g(R^{(j)}_{\mathbf{n}}) ] \geq  E\left[\int_{0}^{R^{(j)}_{\mathbf{n}}}u{\rm d}g(u)\right]+\sum_{i=1}^{n_k-1}E[(S_{\mathbf{n}^*,i+1}-S_{\mathbf{n}^*,i})g(Y_i)].
	\]
	The desired result follows by noticing that the last term is nonnegative due to the single-index demi(sub)martingale property of the sequence $\{S_{\mathbf{n}^*,i},\, 1\leq i\leq n_k\}$. Inequality \eqref{NW2} follows from \eqref{NW1} by choosing $g(u) = I(u\geq \epsilon)$.
\end{proof}

\medskip

\noindent As a direct consequence of Theorem \ref{Doob_new} we get the following inequalities. The single-index analogues can be found in \cite{PR2012} (see relations (2.7.1) and (2.7.2)).

\begin{corollary}
	Let $\left\{S_{\mathbf{n}}, \mathbf{n} \in \mathbb{N}^{k}\right\}$ be a multiindexed demimartingale with $S_{\boldsymbol{\ell}}=0$ when $\prod_{i=1}^{k} \ell_{i}=0$. Then for any $\epsilon >0$, 
	\begin{equation}
	\label{Doobv2}\epsilon P\left(\max_{\mathbf{i}\leq \mathbf{n}}S_{\mathbf{j}}\geq \epsilon\right) \leq E\left(S_\mathbf{n}I\left(\max_{\mathbf{i}\leq \mathbf{n}}S_{\mathbf{j}}\geq \epsilon\right)\right) 
	\end{equation}
	and
	\[
	\epsilon P\left(\min_{\mathbf{i}\leq \mathbf{n}}S_{\mathbf{i}}\geq \epsilon\right) \leq E\left(S_\mathbf{n}I\left(\min_{\mathbf{i}\leq \mathbf{n}}S_{\mathbf{i}}\geq \epsilon\right)\right) 
	\]
\end{corollary}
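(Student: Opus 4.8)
The plan is to observe that both the maximum and the minimum appearing in the statement are particular instances of the multiindexed rank orders $R_{\mathbf{n}}^{(j)}$ introduced just before Theorem~\ref{Doob_new}, so that the corollary follows immediately by specializing inequality~\eqref{NW2}. A multiindexed demimartingale is in particular a multiindexed demi(sub)martingale, so the hypotheses of Theorem~\ref{Doob_new} are satisfied once the boundary condition $S_{\boldsymbol{\ell}}=0$ whenever $\prod_{i=1}^{k}\ell_i=0$ is imposed, as it is in the statement; and the choice $g(u)=I(u\geq\epsilon)$ that was used to pass from \eqref{NW1} to \eqref{NW2} is legitimate precisely because $\epsilon>0$ forces $g(0)=0$ for this $g$.

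For the first inequality I would take $j=1$. By definition $R_{\mathbf{n}}^{(1)}$ is the largest element of $\{S_{\mathbf{m}},\mathbf{m}\leq\mathbf{n}\}$, that is, $R_{\mathbf{n}}^{(1)}=\max_{\mathbf{i}\leq\mathbf{n}}S_{\mathbf{i}}$, so \eqref{NW2} with $j=1$ reads
$$\epsilon P\left(\max_{\mathbf{i}\leq\mathbf{n}}S_{\mathbf{i}}\geq\epsilon\right)\leq E\left(S_{\mathbf{n}}I\left(\max_{\mathbf{i}\leq\mathbf{n}}S_{\mathbf{i}}\geq\epsilon\right)\right),$$
which is exactly \eqref{Doobv2}.

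For the second inequality I would instead choose any index $j$ with $j>\prod_{i=1}^{k}n_i$; for such $j$ the definition gives $R_{\mathbf{n}}^{(j)}=\min_{\mathbf{m}\leq\mathbf{n}}S_{\mathbf{m}}$, and \eqref{NW2} becomes
$$\epsilon P\left(\min_{\mathbf{i}\leq\mathbf{n}}S_{\mathbf{i}}\geq\epsilon\right)\leq E\left(S_{\mathbf{n}}I\left(\min_{\mathbf{i}\leq\mathbf{n}}S_{\mathbf{i}}\geq\epsilon\right)\right),$$
as required. There is no genuine obstacle here, since all the analytic content is already contained in Theorem~\ref{Doob_new}; the only point that needs a moment's care is the bookkeeping that identifies $\max_{\mathbf{i}\leq\mathbf{n}}S_{\mathbf{i}}$ with $R_{\mathbf{n}}^{(1)}$ and $\min_{\mathbf{i}\leq\mathbf{n}}S_{\mathbf{i}}$ with $R_{\mathbf{n}}^{(j)}$ for $j$ large, together with the observation that the indicator function satisfies $g(0)=0$ exactly because $\epsilon>0$.
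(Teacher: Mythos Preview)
Your proposal is correct and is precisely the argument the paper has in mind: the corollary is stated there as a ``direct consequence'' of Theorem~\ref{Doob_new}, and your specialization of \eqref{NW2} to $j=1$ and to $j>\prod_{i=1}^{k}n_i$ (equivalently $j=\prod_{i=1}^{k}n_i$) is exactly how that consequence is obtained.
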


\medskip

	\begin{remark}
		As it has already been mentioned, for the case of multiindexed martingales the classical Doob's inequality cannot be established in general. However, inequality \eqref{Doobv2} shows that in the case of multiindexed demimartingales this celebrated result holds true. It is also important to highlight that, compared to Theorem \ref{theorem1}, the upper bound in \eqref{Doobv2} does not depend on the dimension of the index. Moreover, note that for $k=1$, inequality \eqref{Doobv2} is reduced to the result of Theorem \ref{Doob} for the case where $g(x) = x$.
	\end{remark}

\noindent The Doob-type inequality obtained in \eqref{Doobv2} becomes the source result for  various moment inequalities which generalize results that are already known for the case of single-indexed demimartingales.

\begin{corollary}
	\label{mom1}	Let $\left\{S_{\mathbf{n}}, \mathbf{n} \in \mathbb{N}^{k}\right\}$ be a nonnegative multidimensionally indexed demimartingale with $S_{\boldsymbol{\ell}}=0$ when $\prod_{i=1}^{k} \ell_{i}=0$. Then, 
	\[
	E\left(\max_{\mathbf{i}\leq \mathbf{n}}S_{\mathbf{j}}\right)^p \leq \begin{cases}
	\left(\frac{p}{p-1}\right)^pE(S_{\mathbf{n}})^p, &\quad\mbox{for}\quad p>1\\
	\frac{e}{e-1}+\frac{e}{e-1}ES_{\mathbf{n}}\log^+S_{\mathbf{n}}, &\quad\mbox{for}\quad p=1.
	\end{cases}
	\]
\end{corollary}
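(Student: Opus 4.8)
\noindent The plan is to derive both estimates from the maximal inequality \eqref{Doobv2} by the classical layer-cake argument, writing $M=\max_{\mathbf{i}\le\mathbf{n}}S_{\mathbf{i}}$ and using \eqref{Doobv2} in the form $\epsilon P(M\ge\epsilon)\le E\big(S_{\mathbf{n}}I(M\ge\epsilon)\big)$. No further multiindexed machinery is needed once \eqref{Doobv2} is in hand: the rest is exactly the bookkeeping behind the one-parameter $L^p$ and $L\log L$ maximal inequalities, which is also why the constants below are independent of $k$.

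\noindent \emph{Case $p>1$.} To avoid having to know a priori that $EM^p<\infty$, I would work with the truncation $M\wedge N$. For fixed $N$, by the layer-cake formula, then \eqref{Doobv2}, then Tonelli (all integrands nonnegative),
\[
E(M\wedge N)^p=\int_0^N p\,\epsilon^{p-1}P(M\ge\epsilon)\,d\epsilon\le\int_0^N p\,\epsilon^{p-2}E\big(S_{\mathbf{n}}I(M\ge\epsilon)\big)\,d\epsilon=\frac{p}{p-1}\,E\big(S_{\mathbf{n}}(M\wedge N)^{p-1}\big).
\]
Hölder's inequality with exponents $p$ and $p/(p-1)$ gives $E\big(S_{\mathbf{n}}(M\wedge N)^{p-1}\big)\le(ES_{\mathbf{n}}^p)^{1/p}\big(E(M\wedge N)^p\big)^{(p-1)/p}$; since $E(M\wedge N)^p\le N^p<\infty$, I may divide by $\big(E(M\wedge N)^p\big)^{(p-1)/p}$ to obtain $\big(E(M\wedge N)^p\big)^{1/p}\le\frac{p}{p-1}(ES_{\mathbf{n}}^p)^{1/p}$, and letting $N\to\infty$ by monotone convergence yields the first bound (which is vacuous if $ES_{\mathbf{n}}^p=\infty$).

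\noindent \emph{Case $p=1$.} Here I first record that $EM<\infty$: taking $f\equiv1$ in the demimartingale inequality gives $ES_{\mathbf{i}}\le ES_{\mathbf{n}}$ for $\mathbf{i}\le\mathbf{n}$, so by nonnegativity $EM\le\sum_{\mathbf{i}\le\mathbf{n}}ES_{\mathbf{i}}\le\big(\prod_i n_i\big)ES_{\mathbf{n}}<\infty$ (the asserted bound being trivial when $ES_{\mathbf{n}}\log^+S_{\mathbf{n}}=\infty$). Splitting the layer-cake integral at $\epsilon=1$ and using \eqref{Doobv2} and Tonelli,
\[
EM=\int_0^\infty P(M\ge\epsilon)\,d\epsilon\le 1+\int_1^\infty\epsilon^{-1}E\big(S_{\mathbf{n}}I(M\ge\epsilon)\big)\,d\epsilon=1+E\big(S_{\mathbf{n}}\log^+M\big),
\]
since $\int_1^\infty\epsilon^{-1}I(M\ge\epsilon)\,d\epsilon=\log^+M$. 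Then I apply the elementary inequality $u\log^+v\le u\log^+u+v/e$, valid for all $u,v\ge0$ (verified by minimizing $u\log^+u+v/e-u\log^+v$ over $v$ for fixed $u$), with $u=S_{\mathbf{n}}$ and $v=M$, to get $E\big(S_{\mathbf{n}}\log^+M\big)\le E\big(S_{\mathbf{n}}\log^+S_{\mathbf{n}}\big)+\tfrac{1}{e}EM$. Substituting into the previous display and rearranging (legitimate since $EM<\infty$) gives $(1-\tfrac{1}{e})EM\le 1+E\big(S_{\mathbf{n}}\log^+S_{\mathbf{n}}\big)$, i.e. the stated bound with constant $e/(e-1)$.

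\noindent The argument is essentially routine; the only points requiring care are the two finiteness checks that make the division (for $p>1$) and the rearrangement (for $p=1$) valid — handled respectively by the truncation $M\wedge N$ and by the crude bound $M\le\sum_{\mathbf{i}\le\mathbf{n}}S_{\mathbf{i}}$ — and the elementary $u\log^+v$ inequality needed to recover the sharp constant in the case $p=1$.
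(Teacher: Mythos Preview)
Your proof is correct and follows exactly the route the paper indicates: the corollary is stated there without an explicit proof, merely as a direct consequence of the Doob-type inequality \eqref{Doobv2}, and the standard layer-cake/H\"older derivation you give is precisely what is meant. Your added care with the truncation $M\wedge N$ for $p>1$ and the crude bound $EM\le\big(\prod_i n_i\big)\,ES_{\mathbf{n}}$ for $p=1$ goes beyond what the paper spells out but is welcome rigor.
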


	\begin{remark}
		Observe that the inequalities of Corollary \ref{mom1}  provide sharper bounds compared to the ones obtained in Theorem \ref{theorem1} while for the case $k=1$, Corollary \ref{mom1} is reduced to Theorem \ref{Wang}.
	\end{remark}

\noindent For single-indexed positive martingales, Harremo\"{e}s in \cite{H2008} provided a maximal moment inequality which can be consider as a strengthening of a classical maximal inequality by Doob.   Prakasa Rao in \cite{PR2007} proved that Harremo\"{e}s result is also valid for a sequence of single-indexed positive demimartingales (see Theorem 2.7.3 in \cite{PR2012}). Motivated by these results, we provide a generalization to the case of multiindexed demimartingales. The key result for obtaining the particular moment inequality is again expression \eqref{Doobv2}. 

\begin{theorem}
	Let $\left\{S_{\mathbf{n}}, \mathbf{n} \in \mathbb{N}^{k}\right\}$ be a positive multidimensionally indexed demimartingale with $S_{\boldsymbol{\ell}}=0$ when $\prod_{i=1}^{k} \ell_{i}=0$ and $S_{\boldsymbol{\ell}}=c\in (0,1]$ when $\sum_{i=1}^{k} \ell_{i}=1$. Let $\gamma(x) = x-\ln x -c$ for $x>0$. Then,
	\[
	\gamma\left(  E\left( \max_{\mathbf{i}\leq \mathbf{n}}S_{\mathbf{i}}    \right)      \right) \leq 1-c^2-\ln c + ES_{\mathbf{n}}\ln S_{\mathbf{n}}.
	\]
	For the special case where $c=1$,
	\[
	\gamma\left(  E\left( \max_{\mathbf{i}\leq \mathbf{n}}S_{\mathbf{i}}    \right)      \right) \leq ES_{\mathbf{n}}\ln S_{\mathbf{n}}.
	\]
\end{theorem}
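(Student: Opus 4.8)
The plan is to follow the Harremo\"{e}s–Prakasa Rao argument and feed the multiindexed Doob inequality \eqref{Doobv2} into it. Write $M = \max_{\mathbf{i}\le\mathbf{n}} S_{\mathbf{i}}$. The starting point is the layer-cake representation
\[
\gamma'(x) = 1 - \tfrac{1}{x}, \qquad \gamma(M) - \gamma(c) = \int_c^{M} \left(1 - \tfrac{1}{t}\right){\rm d}t,
\]
so that, after taking expectations, I would express $E[\gamma(M)] - \gamma(c)$ as an integral over thresholds $t$ of $P(M \ge t)\,(1 - 1/t)$ (being careful about whether $M \ge c$ always holds, which follows from the boundary condition $S_{\boldsymbol\ell} = c$ when $\sum \ell_i = 1$). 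Then on each threshold set $\{M \ge t\}$ with $t \ge 1$ I would invoke \eqref{Doobv2}, namely $P(M\ge t) \le \tfrac1t E(S_{\mathbf n} I(M\ge t))$, to bound the integrand; for $t \in (c,1)$ the factor $1 - 1/t$ is negative, so that part of the integral is bounded by an elementary deterministic quantity.

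The key manipulation is then to interchange the $t$-integral with the expectation (Tonelli, after separating signs) so that
\[
E\!\left[\int_{c}^{M}\left(1-\tfrac1t\right){\rm d}t\right] \;\le\; (\text{elementary term from } t<1) \;+\; E\!\left[S_{\mathbf n}\int_{1}^{M}\left(\tfrac1t-\tfrac{1}{t^2}\right){\rm d}t\right].
\]
Evaluating the inner integral gives $\ln M - 1 + 1/M$, and then I would bound $E[S_{\mathbf n}(\ln M - 1 + 1/M)]$ by dropping the nonnegative-looking pieces appropriately and using the pointwise inequality $S_{\mathbf n} \le M$ to replace the remaining $S_{\mathbf n}\ln M$-type term. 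The final step is to collect the deterministic constants coming from the $t<1$ range and from $\gamma(c)$, $\ln c$, etc., and check they assemble into $1 - c^2 - \ln c$; the $c=1$ case should drop out by direct substitution since then there is no $t<1$ contribution and $\gamma(1)=1-0-1=0$.

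The main obstacle I expect is the bookkeeping at the boundary $t=1$ and for $t\in(c,1)$: one must argue that $M\ge c$ almost surely (using the boundary hypothesis, together with monotonicity implicit in the demimartingale/positivity structure), handle the sign change of $1-1/t$ cleanly, and verify that applying \eqref{Doobv2} is legitimate only for $t\ge 1$ while the sub-unit range contributes a genuinely deterministic bound. A secondary subtlety is that $\gamma$ is convex with minimum at $x=1$, so $\gamma(E[M])$ on the left is controlled via Jensen only in one direction; here we do not need Jensen — we bound $\gamma(E M)$ by $E\gamma(M)$ only if $\gamma$ is convex, which it is, but the inequality goes the wrong way, so instead I would work directly with $\gamma(EM)$ by noting $\gamma(EM) \le \gamma(M)$ fails pointwise; the correct route is to use that $x\mapsto\gamma(x)$ is convex so $E\gamma(M)\ge\gamma(EM)$, hence an upper bound on $E\gamma(M)$ is automatically an upper bound on $\gamma(EM)$. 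That observation is what links the expectation computation to the stated left-hand side, and getting that logical direction right is the crux.
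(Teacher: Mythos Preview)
There is a genuine gap at the step ``using the pointwise inequality $S_{\mathbf n}\le M$ to replace the remaining $S_{\mathbf n}\ln M$-type term''. From $S_{\mathbf n}\le M$ you only get $\ln S_{\mathbf n}\le \ln M$, hence $S_{\mathbf n}\ln S_{\mathbf n}\le S_{\mathbf n}\ln M$; this is a \emph{lower} bound for $S_{\mathbf n}\ln M$, not an upper one, so it cannot convert $E[S_{\mathbf n}\ln M]$ into $E[S_{\mathbf n}\ln S_{\mathbf n}]$ in the direction you need. Neither of the obvious variants ($S_{\mathbf n}\ln M\le M\ln M$ on $\{M\ge1\}$, etc.) lands you on $E[S_{\mathbf n}\ln S_{\mathbf n}]$.

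The paper handles this step by a Gibbs/Young-type trick: it adds the nonnegative quantity $S_{\mathbf n}\,\gamma\!\bigl(M/(S_{\mathbf n}\,EM)\bigr)$ inside the expectation and expands, which yields
\[
E\bigl[S_{\mathbf n}\ln M\bigr]\;\le\;1+E\bigl[S_{\mathbf n}\ln S_{\mathbf n}\bigr]+c\,\ln EM-c^{2}.
\]
Note the extra $c\,\ln EM$ on the right. This is exactly why the paper does \emph{not} work through $E\gamma(M)$ and Jensen: it bounds $EM-c$ directly (applying \eqref{Doobv2} for all $t\ge c$, so no splitting at $t=1$ is needed), gets $EM-c\le E[S_{\mathbf n}\ln M]-c\ln c$, inserts the Gibbs bound, and then forms $\gamma(EM)=EM-c-\ln EM$ by subtracting $\ln EM$ from both sides. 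The subtraction leaves $(c-1)\ln EM$, which is disposed of via $EM\ge c$ and $c\le1$. Your route through $E\gamma(M)$ would produce $-E\ln M$ instead of $-\ln EM$, and that term does not cancel against the Gibbs correction; the argument stalls there even if you replace the faulty $S_{\mathbf n}\le M$ step by the correct Gibbs inequality. Your Jensen observation (\,$\gamma$ convex $\Rightarrow \gamma(EM)\le E\gamma(M)$\,) is correct in isolation, but the intermediate bound on $E\gamma(M)$ that the stated constants require is not available by this path.
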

\begin{proof}
	We use the notation $S_{\mathbf{n}}^{\tiny\mbox{max}} = 	\displaystyle\max_{\mathbf{i}\leq \mathbf{n}}S_{\mathbf{i}}$. Then, 
	\begin{align*}
	& ES_{\mathbf{n}}^{\tiny\mbox{max}} -c = \int_{0}^{\infty}P\left( S_{\mathbf{n}}^{\tiny\mbox{max}} \geq x\right) {\rm d}x -c = \int_{0}^{c}P\left( S_{\mathbf{n}}^{\tiny\mbox{max}} \geq x\right) {\rm d}x+\int_{c}^{\infty}P\left( S_{\mathbf{n}}^{\tiny\mbox{max}} \geq x\right) {\rm d}x -c\\
	& \leq \int_{c}^{\infty}P\left( S_{\mathbf{n}}^{\tiny\mbox{max}} \geq x\right) {\rm d}x \leq  \int_{c}^{\infty} \left( \frac{1}{x} \int_{\{S_{\mathbf{n}}^{\tiny\mbox{max}} \geq x\}} S_{\mathbf{n}}{\rm d} P\right) {\rm d}x \qquad(\mbox{due to}\, \, \eqref{Doobv2})\\
	&= E\left(   S_{\mathbf{n}} \int_{c}^{S_{\mathbf{n}}^{\tiny\mbox{max}}}    \frac{1}{x}{\rm d}x\right) = E\left(   S_{\mathbf{n}}  \ln S_{\mathbf{n}}^{\tiny\mbox{max}}\right)-\ln c ES_{\mathbf{n}}.
	\end{align*}	
	Observe that due to the demimartingale property $ES_{\mathbf{n}} = ES_{\boldsymbol{\ell}} = c $ for $\sum_{i=1}^{k}\ell_i  =1$. Hence,
	\[
	ES_{\mathbf{n}}^{\tiny\mbox{max}} -c\leq E\left(   S_{\mathbf{n}} \ln S_{\mathbf{n}}^{\tiny\mbox{max}}\right)-c\ln c.
	\]
	It is known that $\forall \, x>0, \, \ln x \leq x-1 \, $. Since $c\in(0,1]$ we have that 	$\forall \, x>0, \,\ln x \leq x-c $ and hence $\gamma(x) \geq 0$ for $x>0$ and $c\in (0,1]$. Then,
	\begin{align*}
	& ES_{\mathbf{n}}^{\tiny\mbox{max}} -c\leq E\left[   S_{\mathbf{n}}   \left(   \ln S_{\mathbf{n}}^{\tiny\mbox{max}} +\gamma\left(    \frac{S_{\mathbf{n}}^{\tiny\mbox{max}}}{S_{\mathbf{n}} ES_{\mathbf{n}}^{\tiny\mbox{max}}}                   \right)              \right)          \right] - c\ln c
	\leq 1 + ES_{\mathbf{n}}\ln S_{\mathbf{n}}+ c(\ln ES_{\mathbf{n}}^{\tiny\mbox{max}} -c -\ln c).
	\end{align*}
	Now,
	\begin{align*}
	&\gamma\left(ES_{\mathbf{n}}^{\tiny\mbox{max}}\right) = ES_{\mathbf{n}}^{\tiny\mbox{max}} - c- \ln ES_{\mathbf{n}}^{\tiny\mbox{max}}
	\leq 1 + ES_{\mathbf{n}}\ln S_{\mathbf{n}}+ c(\ln ES_{\mathbf{n}}^{\tiny\mbox{max}} -c -\ln c)- \ln ES_{\mathbf{n}}^{\tiny\mbox{max}}\\
	&\leq 1-c^2-c\ln c +(c-1)\ln E\left(\max_{\mathbf{i}\leq \mathbf{n}}S_{\mathbf{i}}\right)+ES_{\mathbf{n}}\ln S_{\mathbf{n}}. 
	\end{align*}
	The desired result follows by noticing that $\displaystyle\max_{\mathbf{i}\leq \mathbf{n}}S_{\mathbf{i}}\geq c$ and since $c\in(0,1]$, $$(1-c)\ln E\left(\max_{\mathbf{i}\leq \mathbf{n}}S_{\mathbf{i}}\right) \geq (1-c)\ln c.$$
\end{proof}

\medskip

\noindent Next, we obtain a moment inequality for positive multiindexed demimartingales which is motivated by Theorem 3.1 in \cite{WHYS2011}. 

\begin{theorem}
	Let $\left\{S_{\mathbf{n}}, \mathbf{n} \in \mathbb{N}^{k}\right\}$ be a positive multiindexed demimartingale with $S_{\boldsymbol{\ell}}=0$ when $\prod_{i=1}^{k} \ell_{i}=0$ and $S_{\boldsymbol{\ell}}=c>0$ when $\sum_{i=1}^{k} \ell_{i}=1$. Moreover, assume that $ES_{\mathbf{n}}\ln S_{\mathbf{n}} <\infty, \, \forall \mathbf{n} \in \mathbb{N}^{k}$ and $\displaystyle\lim_{\mathbf{n}\to\infty} ES_{\mathbf{n}}\ln S_{\mathbf{n}} = \infty$. Then,
	\[
	\limsup_{\mathbf{n}\to\infty} \dfrac{E(\max_{\mathbf{j}\leq \mathbf{n}}S_{\mathbf{j}})}{ES_{\mathbf{n}}\ln S_{\mathbf{n}}}\leq 1.
	\]
\end{theorem}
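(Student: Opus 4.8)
\noindent The plan is to argue directly from the Doob-type maximal inequality \eqref{Doobv2}, in the spirit of the proof of the previous theorem, but organising the estimates so that the leading constant comes out to be exactly $1$ rather than the $\tfrac{e}{e-1}$ appearing in Corollary \ref{mom1}. Write $S_{\mathbf{n}}^{\tiny\mbox{max}}=\max_{\mathbf{j}\leq\mathbf{n}}S_{\mathbf{j}}$, $M_{\mathbf{n}}=ES_{\mathbf{n}}^{\tiny\mbox{max}}$ and $L_{\mathbf{n}}=ES_{\mathbf{n}}\ln S_{\mathbf{n}}$. First I would record that $M_{\mathbf{n}}<\infty$: since $s\ln(1/s)\leq e^{-1}$ for $0<s<1$ we get $ES_{\mathbf{n}}\ln^{+}S_{\mathbf{n}}\leq L_{\mathbf{n}}+e^{-1}<\infty$, and then Corollary \ref{mom1} gives $M_{\mathbf{n}}\leq\tfrac{e}{e-1}\bigl(1+L_{\mathbf{n}}+e^{-1}\bigr)<\infty$. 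This a priori finiteness is what will let me move a small multiple of $M_{\mathbf{n}}$ from the right-hand side to the left at the end. (One could also try to feed the bound of the preceding theorem into the argument, but that bound is only available for $c\leq 1$, whereas working from \eqref{Doobv2} directly also covers the general case $c>0$ stated here.)

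Next, starting from $M_{\mathbf{n}}=\int_{0}^{\infty}P\bigl(S_{\mathbf{n}}^{\tiny\mbox{max}}\geq x\bigr)\,{\rm d}x$, I would split the integral at $x=1$, bound $\int_{0}^{1}$ by $1$, estimate the tail by \eqref{Doobv2}, and use Tonelli's theorem to obtain
\[
M_{\mathbf{n}}\leq 1+\int_{1}^{\infty}\frac{1}{x}E\bigl(S_{\mathbf{n}}I(S_{\mathbf{n}}^{\tiny\mbox{max}}\geq x)\bigr)\,{\rm d}x=1+E\bigl(S_{\mathbf{n}}\ln^{+}S_{\mathbf{n}}^{\tiny\mbox{max}}\bigr).
\]
Now I would use $S_{\mathbf{n}}^{\tiny\mbox{max}}\geq S_{\mathbf{n}}$ to split the logarithm: by subadditivity of $\ln^{+}$, on $\{S_{\mathbf{n}}>0\}$ one has $\ln^{+}S_{\mathbf{n}}^{\tiny\mbox{max}}\leq\ln^{+}S_{\mathbf{n}}+\ln\bigl(S_{\mathbf{n}}^{\tiny\mbox{max}}/S_{\mathbf{n}}\bigr)$, and to the last term I apply the elementary bound $\ln t\leq\delta t+\ln(1/\delta)-1$ (valid for all $t>0$ and any $\delta>0$) with $t=S_{\mathbf{n}}^{\tiny\mbox{max}}/S_{\mathbf{n}}$, giving $S_{\mathbf{n}}\ln\bigl(S_{\mathbf{n}}^{\tiny\mbox{max}}/S_{\mathbf{n}}\bigr)\leq\delta S_{\mathbf{n}}^{\tiny\mbox{max}}+(\ln(1/\delta)-1)S_{\mathbf{n}}$; the same inequality holds trivially on $\{S_{\mathbf{n}}=0\}$. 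Taking expectations, using $ES_{\mathbf{n}}=c$ (constant in $\mathbf{n}$ by the demimartingale property, exactly as in the proof of the preceding theorem) together with $ES_{\mathbf{n}}\ln^{+}S_{\mathbf{n}}\leq L_{\mathbf{n}}+e^{-1}$, I arrive, for every $\delta\in(0,1)$, at
\[
(1-\delta)M_{\mathbf{n}}\leq L_{\mathbf{n}}+C_{\delta},\qquad C_{\delta}:=1+e^{-1}+\bigl(\ln(1/\delta)-1\bigr)c,
\]
where the finiteness of $M_{\mathbf{n}}$ is what justifies subtracting $\delta M_{\mathbf{n}}$ from both sides.

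Finally I would divide by $L_{\mathbf{n}}$ and let $\mathbf{n}\to\infty$: since $L_{\mathbf{n}}\to\infty$ we have $C_{\delta}/L_{\mathbf{n}}\to 0$, hence $\limsup_{\mathbf{n}\to\infty}M_{\mathbf{n}}/L_{\mathbf{n}}\leq 1/(1-\delta)$ for every $\delta\in(0,1)$, and letting $\delta\downarrow 0$ gives the assertion. The one step requiring genuine care is the one-variable inequality chain in the second paragraph: it must exchange $\ln S_{\mathbf{n}}^{\tiny\mbox{max}}$ for $\ln S_{\mathbf{n}}$ plus an arbitrarily small multiple of $S_{\mathbf{n}}^{\tiny\mbox{max}}$, and it is precisely this free parameter $\delta$ that upgrades the constant $\tfrac{e}{e-1}$ of Corollary \ref{mom1} to the optimal $1$ in the $\limsup$; everything else is the Tonelli computation already carried out earlier plus routine bookkeeping of the order-one error terms.
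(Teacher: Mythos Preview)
Your argument is correct and follows essentially the same route as the paper. The paper starts from the intermediate inequality $ES_{\mathbf{n}}^{\max}-c\leq E(S_{\mathbf{n}}\ln S_{\mathbf{n}}^{\max})-c\ln c$ (established in the proof of the preceding theorem), then applies the elementary bound $b\ln a\leq b\ln b+a x_0^{-1}+b(\ln x_0-1)$ for $x_0>e$, rearranges, divides by $L_{\mathbf{n}}$, lets $\mathbf{n}\to\infty$ and finally $x_0\to\infty$; your parameter $\delta$ is exactly $x_0^{-1}$, and your inequality $\ln t\leq\delta t+\ln(1/\delta)-1$ is the same bound written for the ratio $t=a/b$. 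The only differences are cosmetic: you split the tail integral at $1$ rather than at $c$ and work with $\ln^{+}$ throughout, and you make the finiteness of $M_{\mathbf{n}}$ (needed to move $\delta M_{\mathbf{n}}$ to the left) explicit via Corollary \ref{mom1}, which the paper leaves implicit.
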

\begin{proof}
	It was proven earlier that
	\[
	ES_{\mathbf{n}}^{\tiny\mbox{max}} -c\leq E\left(   S_{\mathbf{n}} \ln S_{\mathbf{n}}^{\tiny\mbox{max}}\right)-c\ln c.
	\]
	According to \cite{WHYS2011} (see relation (3.2) there) for any $a,b>0$ and $x_0>e$
	\[
	b\ln a\leq b\ln b+ax_0^{-1}+b(\ln x_0 -1).
	\]
	By employing this inequality we have that
	\begin{align*}
	&ES_{\mathbf{n}}^{\tiny\mbox{max}} -c\leq E\left(   S_{\mathbf{n}} \ln S_{\mathbf{n}}\right)-c\ln c +x_0^{-1}ES_{\mathbf{n}}^{\tiny\mbox{max}} + ES_{\mathbf{n}}(\ln x_0-1).
	\end{align*}
	Recall that $ES_{\mathbf{n}} = c$ and therefore, after some algebraic calculations, we get that
	\[
	\dfrac{ES_{\mathbf{n}}^{\tiny\mbox{max}}}{ES_{\mathbf{n}}\ln S_{\mathbf{n}}} \leq \dfrac{x_0}{x_0-1}\left(   1 + \dfrac{c(\ln x_0 - \ln c)}{ES_{\mathbf{n}}\ln S_{\mathbf{n}}}       \right).
	\]
	The result is obtained by taking $\limsup$ on both sides as $\mathbf{n} \to \infty$ and then let $x_0$ to tend to infinity.
\end{proof}

\section{Chow-type maximal inequality}
Hadjikyriakou in \cite{MH2010} proved the following Chow-type inequality for multiindexed demimartingales (see also \cite{CH2011}). The result was used to obtain an asymptotic result and further maximal inequalities.
\begin{theorem}
Let $\left\{S_{\mathbf{n}}, \mathbf{n} \in \mathbb{N}^k\right\}$ be a multiindexed demimartingale with $S_{\boldsymbol{\ell}}=0$ when $\prod_{i=1}^{k} \ell_{i}=0$ and $\left\{c_{\mathbf{n}}, \mathbf{n} \in\mathbb{N}^k\right\}$ be a nonincreasing array of positive numbers. Let $g$ be a nonnegative convex function on $\mathbb{R}$ with $g(0)=0$.
Then, $\forall \varepsilon>0$,
$$
\varepsilon P\left(\max_{\mathbf{j} \leq \mathbf{n}}c_{\mathbf{j}} g\left(S_{\mathbf{j}}\right) \geq \varepsilon\right) \leq \min _{1 \leq s \leq k}\left\{\sum_{\mathbf{j} \leq \mathbf{n}} c_{\mathbf{j}} E\left[g\left(S_{\mathbf{j} ; s ; i}\right)-g\left(S_{\mathbf{j} ; s ; i-1}\right)\right]\right\}.
$$
\end{theorem}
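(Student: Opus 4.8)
The plan is to reduce the $k$-dimensional estimate to its one-dimensional instance --- the classical Chow-type maximal inequality for single-indexed demimartingales --- by freezing all coordinates but one, and then to recombine the one-dimensional bounds through a union bound over the remaining coordinates. Since the right-hand side is a minimum over $s\in\{1,\ldots,k\}$, it suffices to fix one such $s$ and prove
$$\varepsilon P\!\left(\max_{\mathbf j\le\mathbf n}c_{\mathbf j}\,g(S_{\mathbf j})\ge\varepsilon\right)\le\sum_{\mathbf j\le\mathbf n}c_{\mathbf j}\,E\!\left[g(S_{\mathbf j;s;j_{s}})-g(S_{\mathbf j;s;j_{s}-1})\right],$$
and then take the minimum over $s$. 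Both the coordinatewise order on $\mathbb N^{k}$ and the defining relation of a multiindexed demimartingale are invariant under permuting the $k$ axes, so without loss of generality I take $s=k$ and write $\mathbf j=(\mathbf j^{*},i)$, $\mathbf n=(\mathbf n^{*},n_{k})$ with $\mathbf j^{*}=(j_{1},\ldots,j_{k-1})$, $\mathbf n^{*}=(n_{1},\ldots,n_{k-1})$ and $1\le i\le n_{k}$.

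The key structural point is that freezing the first $k-1$ coordinates yields a genuine single-indexed demimartingale. Fix $\mathbf j^{*}\le\mathbf n^{*}$ and consider $\{S_{(\mathbf j^{*},i)}:0\le i\le n_{k}\}$; its first term is $S_{(\mathbf j^{*},0)}=0$, since then $\prod_{m}\ell_{m}=0$. For any $i\ge 0$ and any componentwise nondecreasing $h\colon\mathbb R^{i}\to\mathbb R$, the map $(S_{\mathbf m})_{\mathbf m\le(\mathbf j^{*},i)}\mapsto h(S_{(\mathbf j^{*},1)},\ldots,S_{(\mathbf j^{*},i)})$ is componentwise nondecreasing (it simply disregards all but $i$ of its arguments), so the defining inequality of a multiindexed demimartingale, read along the pair $(\mathbf j^{*},i)\le(\mathbf j^{*},i+1)$, gives
$$E\!\left[\bigl(S_{(\mathbf j^{*},i+1)}-S_{(\mathbf j^{*},i)}\bigr)\,h\bigl(S_{(\mathbf j^{*},1)},\ldots,S_{(\mathbf j^{*},i)}\bigr)\right]\ge 0.$$
Thus $\{S_{(\mathbf j^{*},i)}\}_{0\le i\le n_{k}}$ is a single-indexed demimartingale with first term $0$, and $\{c_{(\mathbf j^{*},i)}\}_{1\le i\le n_{k}}$ is nonincreasing in $i$, because $(\mathbf j^{*},i)\le(\mathbf j^{*},i+1)$ and $\{c_{\mathbf n}\}$ is a nonincreasing array of positive numbers.

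Applying the one-dimensional Chow-type inequality (the case $k=1$, which is classical for single-indexed demimartingales; see \cite{PR2012}) to $\{S_{(\mathbf j^{*},i)}\}$, the nonincreasing positive weights $\{c_{(\mathbf j^{*},i)}\}$ and the nonnegative convex function $g$ with $g(0)=0$, we obtain, for every $\varepsilon>0$,
$$\varepsilon P\!\left(\max_{1\le i\le n_{k}}c_{(\mathbf j^{*},i)}\,g(S_{(\mathbf j^{*},i)})\ge\varepsilon\right)\le\sum_{i=1}^{n_{k}}c_{(\mathbf j^{*},i)}\,E\!\left[g(S_{(\mathbf j^{*},i)})-g(S_{(\mathbf j^{*},i-1)})\right].$$
Since
$$\left\{\max_{\mathbf j\le\mathbf n}c_{\mathbf j}\,g(S_{\mathbf j})\ge\varepsilon\right\}=\bigcup_{\mathbf j^{*}\le\mathbf n^{*}}\left\{\max_{1\le i\le n_{k}}c_{(\mathbf j^{*},i)}\,g(S_{(\mathbf j^{*},i)})\ge\varepsilon\right\},$$
finite subadditivity of $P$ together with summation of the previous display over $\mathbf j^{*}\le\mathbf n^{*}$ gives
\begin{align*}
\varepsilon P\!\left(\max_{\mathbf j\le\mathbf n}c_{\mathbf j}\,g(S_{\mathbf j})\ge\varepsilon\right)
&\le\sum_{\mathbf j^{*}\le\mathbf n^{*}}\sum_{i=1}^{n_{k}}c_{(\mathbf j^{*},i)}\,E\!\left[g(S_{(\mathbf j^{*},i)})-g(S_{(\mathbf j^{*},i-1)})\right]\\
&=\sum_{\mathbf j\le\mathbf n}c_{\mathbf j}\,E\!\left[g(S_{\mathbf j;k;j_{k}})-g(S_{\mathbf j;k;j_{k}-1})\right],
\end{align*}
which is exactly the asserted bound for $s=k$. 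Repeating the argument with the $s$-th axis singled out, for $s=1,\ldots,k$, and taking the minimum completes the proof.

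No delicate analytic estimate is involved: the only points requiring care are the structural claim that freezing all but one coordinate yields a genuine single-indexed demimartingale (which amounts to feeding the definition a test function that depends on a single coordinate direction) and the companion remark that a nonincreasing array is nonincreasing along each axis. The only computation of substance is buried in the one-dimensional case; if one prefers to keep the paper self-contained at that step, it can be supplied by the usual first-passage-time decomposition combined with an Abel summation and the demisubmartingale property of $\{g(S_{i})\}$, which is valid because $g\ge 0$ is convex with $g(0)=0$.
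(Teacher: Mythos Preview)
The paper does not supply its own proof of this statement; it is quoted as a prior result from \cite{MH2010} (see also \cite{CH2011}) and then immediately superseded by the sharper single-sum version proved next. Your argument is correct and is the natural one: freezing all but the $s$-th coordinate does produce a single-indexed demimartingale starting at $0$ (this is exactly the ``test functions that ignore most arguments'' observation you make), the one-dimensional Chow inequality applies to each slice, and the union bound over the remaining $k-1$ coordinates recovers the full sum $\sum_{\mathbf j\le\mathbf n}$ on the right-hand side. This is almost certainly the route taken in the original references.

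One remark worth making explicit: the union bound is the reason the right-hand side carries a full $k$-fold sum rather than the single sum $\sum_{i=1}^{n_s}$ that appears in the paper's subsequent theorem. The sharper bound there is obtained by adapting Wang's \cite{W2004} decomposition $g=u+v$ and Abel-summation argument directly in the multiindexed setting, working with $S'_{\mathbf n}=\max_{\mathbf j\le\mathbf n}c_{\mathbf j}u(S_{\mathbf j})$ and the one-parameter family $S'_{\mathbf n^{*},i}$, so that no union bound is ever invoked. Your method cannot reach that refinement, but for the statement as written it is entirely adequate and arguably more transparent.
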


\medskip

Next, we provide a Chow-type inequality for multiindexed demimartingales by applying the methodology introduced by \cite{W2004}. The new approach leads to an upper bound which depends only on a single summation.
\begin{theorem}
\label{Chownew}Let $\left\{S_{\mathbf{n}}, \mathbf{n} \in \mathbb{N}^{k}\right\}$ be a multiindexed demimartingale with $S_{\boldsymbol{\ell}}=0$ when $\prod_{i=1}^{k} \ell_{i}=0$. Let $g$ be a nonnegative convex function on $\mathbb{R}$ with $g(0) = 0$ and $\{c_{\mathbf{n}}, \mathbf{n} \in \mathbb{N}^{k}\}$ be a nonincreasing array of positive numbers. Then, for all $\epsilon >0$
	\[
	\epsilon P(\max_{\mathbf{j}\leq \mathbf{n}} c_\mathbf{j}g(S_\mathbf{j})\geq \epsilon)\leq \min_{1\leq s\leq k}\sum_{i=1}^{n_s}c_{\mathbf{n};s;i}E\left[ \left( g(S_{\mathbf{n};s;i}) -   g(S_{\mathbf{n};s;i-1})   \right) I\left(\max_{\mathbf{k}\leq \mathbf{n}} c_\mathbf{k}g(S_\mathbf{k})\geq \epsilon\right)  \right]
	\]
\end{theorem}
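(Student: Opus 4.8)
The plan is to imitate, one coordinate at a time, the single-indexed argument of \cite{W2004}, passing from dimension $k-1$ to dimension $k$ by induction. As a preliminary reduction, since $g$ is convex with $g(0)=0$ the array $T_{\mathbf{n}}:=g(S_{\mathbf{n}})$ is itself a nonnegative multiindexed demisubmartingale with $T_{\boldsymbol{\ell}}=0$ whenever $\prod_{i=1}^{k}\ell_i=0$; writing $A:=\{\max_{\mathbf{k}\le\mathbf{n}}c_{\mathbf{k}}T_{\mathbf{k}}\ge\epsilon\}$, it suffices to prove, for each fixed $s$,
\[
\epsilon\,P(A)\ \le\ \sum_{i=1}^{n_s}c_{\mathbf{n};s;i}\,E\big[(T_{\mathbf{n};s;i}-T_{\mathbf{n};s;i-1})\,I(A)\big],
\]
since the minimum over $s\in\{1,\dots,k\}$ is then automatic.

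\textbf{Single-indexed core ($k=1$).} Set $\tau:=\min\{i:c_iT_i\ge\epsilon\}$ on $A$ and $\tau:=\infty$ off $A$. The events $\{\tau=i\}$ partition $A$ and $c_iT_i\ge\epsilon$ on $\{\tau=i\}$, so $\epsilon P(A)\le\sum_iE[c_iT_i\,I(\tau=i)]$. Abel summation in $i$, using $T_0=0$, gives $c_iT_i=\sum_{l\le i}c_l(T_l-T_{l-1})-\sum_{l<i}(c_l-c_{l+1})T_l$, and interchanging the order of summation rewrites the bound as
\[
\epsilon P(A)\ \le\ \sum_{l}c_l\,E\big[(T_l-T_{l-1})\,I(\{\tau\ge l\}\cap A)\big]-\sum_{l}(c_l-c_{l+1})\,E\big[T_l\,I(\{\tau\ge l+1\}\cap A)\big].
\]
The second sum consists of nonnegative terms ($c_l-c_{l+1}\ge0$, $T_l\ge0$) and may be discarded. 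In the first, $\{\tau\le l-1\}\subseteq A$ gives $I(\{\tau\ge l\}\cap A)=I(A)-I(\{\tau\le l-1\})$; and $\{\tau\le l-1\}=\{\max_{1\le m\le l-1}c_mT_m\ge\epsilon\}$ is a nonnegative nondecreasing function of $(T_1,\dots,T_{l-1})$ since the $c_m$ are positive, so the demisubmartingale property gives $E[(T_l-T_{l-1})I(\{\tau\le l-1\})]\ge0$. Hence each term is at most $c_lE[(T_l-T_{l-1})I(A)]$, proving the case $k=1$.

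\textbf{Inductive step.} Assume the statement in dimension $k-1$, for every direction. Given a $k$-indexed demimartingale and a direction $s$, relabel the coordinates so that $s=k$ and write $\mathbf{n}=(\mathbf{n}',n_k)$. For each $i$, the slice $\{S_{(\mathbf{j}',i)}:\mathbf{j}'\le\mathbf{n}'\}$ is a $(k-1)$-indexed demimartingale with the boundary-zero property, and $\{c_{(\mathbf{j}',i)}:\mathbf{j}'\le\mathbf{n}'\}$ is a nonincreasing $(k-1)$-array, so the induction hypothesis applies on every slice $\{j_k=i\}$. I would then run a first-passage argument in the $k$-th coordinate for the running maxima $M_i:=\max_{\mathbf{m}\le(\mathbf{n}',i)}c_{\mathbf{m}}T_{\mathbf{m}}$ (so that $M_{n_k}=\max_{\mathbf{k}\le\mathbf{n}}c_{\mathbf{k}}T_{\mathbf{k}}$ and $A=\{M_{n_k}\ge\epsilon\}$): this reduces $\epsilon P(A)$ to a weighted sum, over the first-passage sets, of the slice maxima $V_i:=\max_{\mathbf{j}'\le\mathbf{n}'}c_{(\mathbf{j}',i)}T_{(\mathbf{j}',i)}$; the induction hypothesis on slice $i$ then replaces each $V_i$ by the slice-edge quantity $c_{(\mathbf{n}',i)}T_{(\mathbf{n}',i)}$ plus nonnegative corrections built from increments of the weights within a slice and from boundary values of $T$ that vanish; finally the single-indexed mechanism above, run in the $k$-th coordinate on the demisubmartingale $\{T_{(\mathbf{n}',i)}\}_i$ with weights $\{c_{(\mathbf{n}',i)}\}_i$, produces the asserted single summation, the corrections telescoping away by the monotonicity of the weights together with $g(S_{\boldsymbol{\ell}})=0$. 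Taking the minimum over $s$ completes the proof.

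\textbf{Expected main obstacle.} The delicate point throughout is to keep the indicator equal to $I(A)$. A demisubmartingale inequality can be tested only against nonnegative \emph{nondecreasing} functions, whereas the natural not-yet-exceeded events — $\{\tau\ge l\}$ in the single-indexed core, $\{M_{i-1}<\epsilon\}$ in the $k$-th coordinate — are antitone; the device that rescues the argument, already visible above, is to rewrite each such indicator as $I(A)$ minus an already-exceeded indicator (which is genuinely nondecreasing in the relevant coordinates) before invoking the demisubmartingale property. Making this device operate simultaneously with the first-passage in the $k$-th coordinate and with the $(k-1)$-dimensional induction hypothesis on the slices, so that the two nested summations genuinely collapse to the single sum over the $s$-th coordinate — the step in which the nonincreasingness of $\{c_{\mathbf{n}}\}$ and the boundary condition $g(S_{\boldsymbol{\ell}})=0$ are consumed — is the heart of the matter.
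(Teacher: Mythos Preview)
Your opening reduction is the problem. You assert that for a nonnegative convex $g$ with $g(0)=0$ the array $T_{\mathbf{n}}=g(S_{\mathbf{n}})$ is itself a multiindexed demisubmartingale, and then the entire $k=1$ core rests on this: the key step ``the demisubmartingale property gives $E[(T_l-T_{l-1})I(\{\tau\le l-1\})]\ge0$'' uses that $I(\{\tau\le l-1\})=I(\max_{m\le l-1}c_mT_m\ge\epsilon)$ is nondecreasing \emph{in the $T_m$'s}. But the standard closure result (Christofides \cite{C2000}, Lemma~2.1) gives $(g(S_n))$ demisubmartingale only when $g$ is \emph{nondecreasing} convex; for merely convex $g$ (say $g(x)=|x|$ or $g(x)=x^2$) this is not available. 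If you try to fall back on the demimartingale property of $(S_{\mathbf{n}})$ itself, you need $g'_+(S_{l-1})\,I(\max_{m\le l-1}c_m g(S_m)\ge\epsilon)$ to be componentwise nondecreasing in $S_1,\dots,S_{l-1}$, and it is not: $g$ is not monotone, so the indicator is not monotone in the $S_m$'s, and $g'_+$ changes sign. This is exactly why Wang \cite{W2004} --- and the paper --- decompose $g=u+v$ with $u(x)=g(x)I\{x\ge0\}$ nondecreasing and $v(x)=g(x)I\{x<0\}$ nonincreasing, treat the two maxima separately, and only then invoke the demimartingale property (against $u'_+(\cdot)\,m(\cdot)$ and $v'_-(\cdot)\,m(\cdot)$, each of which \emph{is} monotone of the right type). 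Your ``preliminary reduction'' skips this and the argument does not go through as written.

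A second point: the paper does \emph{not} induct on $k$. It fixes a direction $s$ (say $s=k$), sets $S'_{\mathbf{n}}=\max_{\mathbf{j}\le\mathbf{n}}c_{\mathbf{j}}u(S_{\mathbf{j}})$, telescopes $E\bigl[\int_0^{S'_{\mathbf{n}}}t\,dm(t)\bigr]$ as $\sum_{i=1}^{n_k}E\bigl[\int_{S'_{\mathbf{n}^*,i-1}}^{S'_{\mathbf{n}^*,i}}t\,dm(t)\bigr]$, and uses that either $S'_{\mathbf{n}^*,i}=S'_{\mathbf{n}^*,i-1}$ or the new maximum is realized on the slice $j_k=i$. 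The multiindex only enters through the running maxima $S'_{\mathbf{n}^*,i}$; the single-indexed demimartingale $\{S_{\mathbf{n}^*,i}\}_{1\le i\le n_k}$ does all the structural work in one pass, yielding the single sum directly. Your inductive scheme --- apply the $(k-1)$-dimensional bound on each slice, then a first-passage in the $k$-th coordinate, then argue that corrections ``telescope away'' --- is a genuine detour, and the paragraph you wrote does not carry it out: you never show how the slice bounds (which involve $c_{(\mathbf{n}',i)}T_{(\mathbf{n}',i)}$ on each slice separately) combine into the single sum $\sum_i c_{\mathbf{n}^*,i}E[(T_{\mathbf{n}^*,i}-T_{\mathbf{n}^*,i-1})I(A)]$ across slices, nor why the indicators coming from the $(k-1)$-dimensional hypothesis match the global $I(A)$.
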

\begin{proof}
	The proof is motivated by the the proof of Theorem 2.1 in \cite{W2004}. First, we define the functions
	\[
	u(x) = g(x) I\{x\geq 0\}\quad\mbox{and}\quad v(x) = g(x)I\{x<0\}
	\]
	which are both nonnegative convex functions with $u(x)$ being nondecreasing while $v(x)$ is a nonincreasing function. Observe that $g(x) = u(x)+v(x) = \max\{u(x), v(x)\}$. Therefore,
	\begin{equation}
	\label{chowang1}\epsilon P(\max_{\mathbf{j}\leq \mathbf{n}} c_\mathbf{j}g(S_\mathbf{j})\geq \epsilon)\leq \epsilon P(\max_{\mathbf{j}\leq \mathbf{n}} c_\mathbf{j}u(S_\mathbf{j})\geq \epsilon)+\epsilon P(\max_{\mathbf{j}\leq \mathbf{n}} c_\mathbf{j}v(S_\mathbf{j})\geq \epsilon).
	\end{equation}
	Following the steps and the notation introduced by \cite{W2004}, we consider $m(\cdot)$ to be a nonnegative nondecreasing function with $m(0) = 0$ and let 
	\[
	S'_{\mathbf{n}} = \max_{\mathbf{j}\leq \mathbf{n}} c_\mathbf{j}u(S_\mathbf{j}).
	\]
	Without loss of generality we fix $n_1, \ldots, n_{k-1}$ and denote $(\mathbf{n}^*,i) = (n_1 \, \ldots \,  n_{k-1} \, i)$. Then,
	\begin{align*}
	&E\left[ \int_{0}^{S'_{\mathbf{n}}}   t{\rm d}m(t)   \right] = \sum_{i=1}^{n_k}E\left[ \int_{S'_{\mathbf{n^*},i-1}}^{S'_{\mathbf{n}^*,i}}   t{\rm d}m(t)   \right]\leq \sum_{i=1}^{n_k} E\left[ S'_{\mathbf{n^*},i}           \left(  m\left(    S'_{\mathbf{n^*},i}      \right)    -      m\left(    S'_{\mathbf{n^*},i-1}      \right)               \right)\right]\\
	&\leq \sum_{i=1}^{n_k} c_{\mathbf{n^*},i}E\left[ u(S_{\mathbf{n^*},i})           \left(  m\left(    S'_{\mathbf{n^*},i}      \right)    -      m\left(    S'_{\mathbf{n^*},i-1}      \right)               \right)\right]\leq  \sum_{i=1}^{n_k} c_{\mathbf{n^*},i}E\left[ m(S'_{\mathbf{n}})           \left(  u\left(    S_{\mathbf{n^*},i}      \right)    -      u\left(    S_{\mathbf{n^*},i-1}      \right)               \right)\right]-A
	\end{align*}
	where 
	\[
	A = \sum_{i=1}^{n_k-1}E \left[ (c_{\mathbf{n^*},i+1} u(S_{\mathbf{n^*},i+1}) -  c_{\mathbf{n^*},i} u(S_{\mathbf{n^*},i})  ) m(S'_{\mathbf{n^*},i})                      \right]+\sum_{i=1}^{n_k-1}( c_{\mathbf{n^*},i}-c_{\mathbf{n^*},i+1}) E[u(S_{\mathbf{n^*},i})m(S'_{\mathbf{n}}) ].
	\]
	The second inequality follows by observing that 
	$ S'_{\mathbf{n^*},i}\geq S'_{\mathbf{n^*},i-1}$ and thus $S'_{\mathbf{n^*},i} = c_{\mathbf{n^*},i}u(S_{\mathbf{n^*},i})$ or $m(    S'_{\mathbf{n^*},i})    = m(    S'_{\mathbf{n^*},i-1} ) $. We need to prove that $A$ is a nonnegative term. Note that due to the fact that $( c_{\mathbf{n^*},i}-c_{\mathbf{n^*},i+1})u(S_{\mathbf{n^*},i})\geq 0$ for any $i$, and by the convexity of the function $u(\cdot)$ we have that 
	\begin{align*}
	&A \geq \sum_{i=1}^{n_k-1}E \left[ (c_{\mathbf{n^*},i+1} u(S_{\mathbf{n^*},i+1}) -  c_{\mathbf{n^*},i} u(S_{\mathbf{n^*},i})  ) m(S'_{\mathbf{n^*},i})                      \right]+\sum_{i=1}^{n_k-1}( c_{\mathbf{n^*},i}-c_{\mathbf{n^*},i+1}) E[u(S_{\mathbf{n^*},i})m(S'_{\mathbf{n^*},i}) ]\\
	&= \sum_{i=1}^{n_k-1}  c_{\mathbf{n^*},i+1}E\left[  (    u(S_{\mathbf{n^*},i+1}) -      u(S_{\mathbf{n^*},i})    )      m(S'_{\mathbf{n^*},i})                    \right]\geq \sum_{i=1}^{n_k-1}  c_{\mathbf{n^*},i+1}E\left[  (    S_{\mathbf{n^*},i+1} -      S_{\mathbf{n^*},i})    h(S_{\mathbf{n^*},i})  m(S'_{\mathbf{n^*},i})                    \right]\geq 0
	\end{align*}
	since $  h(S_{\mathbf{n^*},i})  m(S'_{\mathbf{n^*},i})$ is a nondecreasing function of $\{S_{\mathbf{n^*},i}, 1\leq i\leq n_k\}$ which forms a single indexed demimartingale. Consider the case where $m(t) = I\{ t\geq \epsilon \}$. Then, since $S'_{\mathbf{n}} \leq \displaystyle \max_{\mathbf{j}\leq \mathbf{n}} c_\mathbf{j}g(S_\mathbf{j})$
	\begin{align}
	&\nonumber\epsilon P(\max_{\mathbf{j}\leq \mathbf{n}} c_\mathbf{j}u(S_\mathbf{j})\geq \epsilon) \leq \sum_{i=1}^{n_k} c_{\mathbf{n^*},i}E\left[           \left(  u\left(    S_{\mathbf{n^*},i}      \right)    -      u\left(    S_{\mathbf{n^*},i-1}      \right)               \right)I\{S'_{\mathbf{n}}\geq \epsilon\} \right]\\
	&\nonumber= \sum_{i=1}^{n_k-1}( c_{\mathbf{n^*},i} -c_{\mathbf{n^*},i+1}) E[u(S_{\mathbf{n^*},i})I\{S'_{\mathbf{n}}\geq \epsilon\}] +c_{\mathbf{n}}E[u\left(    S_{\mathbf{n}  }    \right)I\{S'_{\mathbf{n}}\geq \epsilon\}]\\
	&\nonumber\leq  \sum_{i=1}^{n_k-1}( c_{\mathbf{n^*},i} -c_{\mathbf{n^*},i+1}) E[u(S_{\mathbf{n^*},i})I\{\max_{\mathbf{j}\leq \mathbf{n}} c_\mathbf{j}g(S_\mathbf{j})\geq \epsilon\}] +c_{\mathbf{n}}E[u\left(    S_{\mathbf{n}  }    \right)I\{\max_{\mathbf{j}\leq \mathbf{n}} c_\mathbf{j}g(S_\mathbf{j})\geq \epsilon\}]\\
	&\label{chowwangu}=\sum_{i=1}^{n_k} c_{\mathbf{n^*},i}E\left[           \left(  u\left(    S_{\mathbf{n^*},i}      \right)    -      u\left(    S_{\mathbf{n^*},i-1}      \right)               \right)I\{\max_{\mathbf{j}\leq \mathbf{n}} c_\mathbf{j}g(S_\mathbf{j})\geq \epsilon\} \right].
	\end{align}
	Similarly, it can be proven that 
	\begin{equation}
	\label{chowwangv}\epsilon P(\max_{\mathbf{j}\leq \mathbf{n}} c_\mathbf{j}v(S_\mathbf{j})\geq \epsilon) \leq \sum_{i=1}^{n_k} c_{\mathbf{n^*},i}E\left[           \left(  v\left(    S_{\mathbf{n^*},i}      \right)    -      v\left(    S_{\mathbf{n^*},i-1}      \right)               \right)I\{\max_{\mathbf{j}\leq \mathbf{n}} c_\mathbf{j}g(S_\mathbf{j})\geq \epsilon\} \right].
	\end{equation}
	By combining \eqref{chowang1}-\eqref{chowwangv}, we have that 
	\[
	\epsilon P(\max_{\mathbf{j}\leq \mathbf{n}} c_\mathbf{j}g(S_\mathbf{j})\geq \epsilon)\leq \sum_{i=1}^{n_k} c_{\mathbf{n^*},i}E\left[           \left(  g\left(    S_{\mathbf{n^*},i}      \right)    -      g\left(    S_{\mathbf{n^*},i-1}      \right)               \right)I\{\max_{\mathbf{j}\leq \mathbf{n}} c_\mathbf{j}g(S_\mathbf{j})\geq \epsilon\} \right]
	\]
	which leads to the desired result.
\end{proof}

\medskip

\noindent As a direct consequence of the Chow-type inequality proven above, we can easily obtain the following convergence result.
\begin{theorem}
Let $\left\{S_{\mathbf{n}}, \mathbf{n} \in \mathbb{N}^{k}\right\}$ be a multiindexed demimartingale with $S_{\boldsymbol{\ell}}=0$ when $\prod_{i=1}^{k} \ell_{i}=0$. Let $g$ be a nonnegative convex function on $\mathbb{R}$ with $g(0) = 0$ and $\{c_{\mathbf{n}}, \mathbf{n} \in \mathbb{N}^{k}\}$ be a nonincreasing array of positive numbers. Assume that, for some $1\leq s\leq k$ and $p\geq 1$, $$\sum_{i=1}^{\infty}c^p_{\mathbf{n};s;i}E([g(S_{\mathbf{n};s;i})]^p-[g(S_{\mathbf{n};s;i-1})]^p)<\infty\quad\mbox{and}\quad c^p_{\mathbf{n}}E([g(S_{\mathbf{n}})]^p-[g(S_{\mathbf{n};s;n_s-1})]^p)\to 0,\,  n\to \infty.$$ Then,
	\[
	c_{\mathbf{n}}g(S_{\mathbf{n}})\to0\quad\mbox{a.s.,}\quad\mathbf{n}\to \infty.
	\]
\end{theorem}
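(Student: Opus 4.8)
The plan is to deduce the almost sure convergence from the Chow-type maximal inequality of Theorem \ref{Chownew} via a standard tail-probability-plus-Borel--Cantelli-type argument, but carried out on the random variable $\sup$ over a shrinking family of index sets rather than on a single term. First I would observe that it suffices to work with $\left[g(S_{\mathbf{n}})\right]^p$ in place of $g(S_{\mathbf{n}})$: since $g$ is a nonnegative convex function with $g(0)=0$ and $p\ge 1$, the function $x\mapsto \left[g(x)\right]^p$ is again nonnegative convex with value $0$ at the origin, so Theorem \ref{Chownew} applies verbatim with $g$ replaced by $g^p$ and $c_{\mathbf n}$ replaced by $c_{\mathbf n}^p$. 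Fix the coordinate $s$ for which the two hypotheses hold. For $\mathbf m \le \mathbf n$ agreeing in all coordinates except possibly the $s$-th, and with the $s$-th coordinate of $\mathbf n$ tending to infinity, I would set
\[
T_{\mathbf m} = \sup\Bigl\{\, c_{\mathbf j}^{\,p}\,\bigl[g(S_{\mathbf j})\bigr]^{p} : \mathbf j \ge \mathbf m,\ \mathbf j \text{ agrees with } \mathbf m \text{ off coordinate } s \,\Bigr\},
\]
i.e. the supremum of the tail along the $s$-th axis starting from $\mathbf m$; the goal $c_{\mathbf n}g(S_{\mathbf n})\to 0$ as $\mathbf n\to\infty$ will follow once I show $T_{\mathbf m}\to 0$ a.s. along that axis, because the general statement $\mathbf n\to\infty$ reduces coordinatewise to this one-dimensional tail after bounding $c_{\mathbf n}^p[g(S_{\mathbf n})]^p \le T_{\mathbf m}$ for all $\mathbf n \ge \mathbf m$.

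The core estimate is to apply Theorem \ref{Chownew} (with $g^p$, $c^p$) on the finite box obtained by truncating the $s$-th coordinate at level $N$ and restricting attention to $\mathbf j\ge \mathbf m$: for any $\epsilon>0$,
\[
\epsilon\,P\Bigl(\max_{\mathbf j} c_{\mathbf j}^{\,p}\bigl[g(S_{\mathbf j})\bigr]^{p}\ge \epsilon\Bigr)
\le \sum_{i=m_s+1}^{N} c_{\mathbf n;s;i}^{\,p}\,E\Bigl[\bigl([g(S_{\mathbf n;s;i})]^p-[g(S_{\mathbf n;s;i-1})]^p\bigr)\,I\{\cdots\}\Bigr]
\le \sum_{i=m_s+1}^{\infty} c_{\mathbf n;s;i}^{\,p}\,E\bigl([g(S_{\mathbf n;s;i})]^p-[g(S_{\mathbf n;s;i-1})]^p\bigr),
\]
where I have dropped the indicator (the summands are nonnegative by convexity and the demimartingale/demisubmartingale property, exactly as in the proof of Theorem \ref{Chownew}). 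Letting $N\to\infty$ through monotone convergence on the left, this gives $\epsilon\,P(T_{\mathbf m}\ge\epsilon)\le R_{\mathbf m}$, where $R_{\mathbf m}:=\sum_{i>m_s} c_{\mathbf n;s;i}^{\,p}E([g(S_{\mathbf n;s;i})]^p-[g(S_{\mathbf n;s;i-1})]^p)$ is the tail of the convergent series in the hypothesis and hence $R_{\mathbf m}\to 0$ as $m_s\to\infty$. Since $T_{\mathbf m}$ is nonincreasing as $m_s$ increases, it has an a.s. limit $T_\infty\ge 0$; the inequality $P(T_\infty\ge\epsilon)\le P(T_{\mathbf m}\ge\epsilon)\le R_{\mathbf m}/\epsilon\to 0$ forces $P(T_\infty\ge\epsilon)=0$ for every $\epsilon>0$, hence $T_\infty=0$ a.s., i.e. $T_{\mathbf m}\to 0$ a.s.

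Finally I would assemble the multidimensional conclusion: the second hypothesis, $c_{\mathbf n}^p E([g(S_{\mathbf n})]^p-[g(S_{\mathbf n;s;n_s-1})]^p)\to 0$, is not actually needed for the a.s.\ statement obtained this way — it is the ingredient one would use for an $L^p$ or "last-term" refinement — so I expect the clean route is the supremum argument above; I would nonetheless remark on how the second hypothesis enters if one instead argues termwise. The main obstacle, and the step requiring the most care, is the reduction from "tail along the single coordinate $s$ tends to $0$" to "$c_{\mathbf n}g(S_{\mathbf n})\to0$ as $\mathbf n\to\infty$ in the sense $\min_j n_j\to\infty$": one must check that $T_{\mathbf m}$, taken with $m_s$ large but the other coordinates of $\mathbf m$ equal to $1$, dominates $c_{\mathbf n}^p[g(S_{\mathbf n})]^p$ for all $\mathbf n$ with $n_s\ge m_s$ — which uses monotonicity of the array $c_{\mathbf n}$ only in the $s$-th coordinate together with the fact that the supremum defining $T_{\mathbf m}$ ranges over a set containing every such $\mathbf n$ once the off-$s$ coordinates of $\mathbf m$ are set to $1$ — and then let $m_s\to\infty$. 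Care is also needed that the "$I\{\cdots\}$" can be discarded, which follows because each bracketed expectation $E([g(S_{\mathbf n;s;i})]^p-[g(S_{\mathbf n;s;i-1})]^p)$ is itself nonnegative, being the increment of the demisubmartingale $\{[g(S_{\mathbf n;s;i})]^p\}_i$ evaluated against the nonnegative constant $1$.
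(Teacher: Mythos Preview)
Your overall strategy matches the paper's: bound the probability that the tail supremum exceeds $\epsilon$ via the Chow-type inequality (Theorem \ref{Chownew}) applied with $g^{p}$ and $c^{p}$, then send the starting index to infinity. There are, however, two places where your execution diverges from the paper and leaves genuine gaps.

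First, your displayed Chow bound has the sum running over $i=m_{s}+1,\ldots,N$, which drops the boundary contribution at $i=m_{s}$. Theorem \ref{Chownew} is proved for $\max_{\mathbf j\le\mathbf n}$ under the convention $S_{\boldsymbol\ell}=0$ on the lattice boundary; when you restrict to $\mathbf j\ge\mathbf m$ the role of that boundary is played by $S_{\mathbf n;s;m_{s}-1}$, and $[g(S_{\mathbf n;s;m_{s}-1})]^{p}$ is not zero in general. The missing summand is exactly $c^{p}_{\mathbf n;s;m_{s}}E\bigl([g(S_{\mathbf n;s;m_{s}})]^{p}-[g(S_{\mathbf n;s;m_{s}-1})]^{p}\bigr)$, and the paper invokes the second hypothesis precisely to force this term to $0$: it isolates the $i=N$ summand and uses $c^{p}_{\mathbf N}E\bigl([g(S_{\mathbf N})]^{p}-[g(S_{\mathbf N^{*},N-1})]^{p}\bigr)\to 0$. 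A one–line check shows the omission matters: with $k=1$, $S_{0}=0$, $S_{n}\equiv 1$, $c_{n}\equiv 1$, $g(x)=|x|$, the tail maximum equals $1$ while your sum $\sum_{i>m_{s}}$ vanishes. So your claim that the second hypothesis is dispensable rests on an incorrect tail version of the Chow bound.

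Second, your $T_{\mathbf m}$ is the supremum along a one-dimensional ray with the off-$s$ coordinates frozen. Even after setting those coordinates to $1$, the ray $\{(1,\ldots,1,j,1,\ldots,1):j\ge m_{s}\}$ does \emph{not} contain a general $\mathbf n$ with $n_{s}\ge m_{s}$, so $T_{\mathbf m}$ does not dominate $c_{\mathbf n}^{p}[g(S_{\mathbf n})]^{p}$ as you assert in the final paragraph. The paper avoids this by working directly with the genuinely multidimensional tail $\max_{\mathbf n\ge\mathbf N}$, $\mathbf N=(N,\ldots,N)$, and applying the Chow inequality in the $s$-direction to that maximum; the resulting sum $\sum_{i\ge N}$ is then split as the $i=N$ term plus the tail $\sum_{i>N}$, handled respectively by the second and the first hypotheses.
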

\begin{proof}
	Without loss of generality, we assume that the conditions are satisfied for $s=k$ and let $\mathbf{N} = (N \, N \, \ldots \, N)$. Then, we can write
\begin{align*}
&P(\max_{\mathbf{n}\geq \mathbf{N}} c_\mathbf{n}g(S_\mathbf{n})\geq \epsilon) = P(\max_{\mathbf{n}\geq \mathbf{N}} c^p_\mathbf{n}[g(S_\mathbf{n})]^p\geq \epsilon^p) \\
& \leq \frac{1}{\epsilon^p}\sum_{i\geq N}c^p_{\mathbf{N^*},i}E\left[           \left(  [g\left(    S_{\mathbf{N^*},i}      \right) ]^p   -      [g\left(    S_{\mathbf{N^*},i-1}      \right)]^p               \right)I\{\max_{\mathbf{n}\geq \mathbf{N}} c^p_\mathbf{n}[g(S_\mathbf{n})]^p\geq \epsilon\} \right]\\
& \leq \frac{1}{\epsilon^p}\sum_{i\geq N}c^p_{\mathbf{N^*},i}E\left[           \left(  [g\left(    S_{\mathbf{N^*},i}      \right) ]^p   -      [g\left(    S_{\mathbf{N^*},i-1}      \right)]^p               \right) \right]\\
&= \frac{1}{\epsilon^p}c^p_{\mathbf{N}}E(([g(S_{\mathbf{N}})]^p - [g(S_{\mathbf{N^*},N-1})]^p) ) + \frac{1}{\epsilon^p}\sum_{i= N+1}^{\infty}c^p_{\mathbf{N^*},i}E\left[           \left(  [g\left(    S_{\mathbf{N^*},i}      \right)] ^p   -      [g\left(    S_{\mathbf{N^*},i-1}    \right)     ]^p          \right) \right]\to 0
\end{align*}
as $N\to\infty$.
\end{proof}

\medskip

\noindent The Chow-type inequality provided above can lead to a H\'{a}jek-R\'{e}nyi inequality for multiindexed associated random variables.

\begin{corollary}
	Let $\left\{X_{\mathbf{n}}, \mathbf{n} \in \mathbb{N}^k\right\}$ be mean zero multiindexed associated random variables, $\left\{c_{\mathbf{n}}, \mathbf{n} \in \mathbb{N}^d\right\}$ a nonincreasing array of positive numbers and $S_{\mathbf{n}}=\sum_{\mathbf{i} \leq \mathbf{n}} X_{\mathbf{i}}$. Then $\forall \varepsilon>0$
	\[
	P(\max_{\mathbf{i} \leq \mathbf{n}}c_{\mathbf{i}}|S_\mathbf{i}|\geq \epsilon)\leq \min_{1 \leq s \leq k} \epsilon^{-2}\sum_{j=1}^{n_s}c_{\mathbf{n};s;j}[2\Cov(X_{\mathbf{n};s;j},S_{\mathbf{n};s;j-1})+E(X^2_{\mathbf{n};s;j})]
	\]
\end{corollary}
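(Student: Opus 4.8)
The plan is to reduce this corollary to the Chow-type inequality of Theorem \ref{Chownew} with the explicit choice $g(x) = x^2$, which is nonnegative, convex and satisfies $g(0)=0$. First I would observe that, since $\{X_{\mathbf{n}}, \mathbf{n}\in\mathbb{N}^k\}$ are mean zero multiindexed associated random variables, the partial sums $S_{\mathbf{n}} = \sum_{\mathbf{i}\leq\mathbf{n}} X_{\mathbf{i}}$ form a multiindexed demimartingale (as recorded in the Remark following Definition 2.2 and in the introduction), and clearly $S_{\boldsymbol{\ell}} = 0$ whenever $\prod_{i=1}^k \ell_i = 0$. Thus Theorem \ref{Chownew} applies to $\{S_{\mathbf{n}}\}$, the array $\{c_{\mathbf{n}}\}$, and $g(x)=x^2$. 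Next, since $\{c_{\mathbf{i}}|S_{\mathbf{i}}| \geq \epsilon\} = \{c_{\mathbf{i}}^2 S_{\mathbf{i}}^2 \geq \epsilon^2\}$ (using $c_{\mathbf{i}}>0$), we have $P(\max_{\mathbf{i}\leq\mathbf{n}} c_{\mathbf{i}}|S_{\mathbf{i}}|\geq\epsilon) = P(\max_{\mathbf{i}\leq\mathbf{n}} c_{\mathbf{i}}^2 S_{\mathbf{i}}^2 \geq \epsilon^2)$; but $c_{\mathbf{i}}^2$ is again a nonincreasing array of positive numbers, so applying Theorem \ref{Chownew} with level $\epsilon^2$ and array $\{c_{\mathbf{n}}^2\}$ gives
\[
\epsilon^2 P\!\left(\max_{\mathbf{i}\leq\mathbf{n}} c_{\mathbf{i}}|S_{\mathbf{i}}|\geq\epsilon\right) \leq \min_{1\leq s\leq k}\sum_{j=1}^{n_s} c_{\mathbf{n};s;j}^2\, E\!\left[\left(S_{\mathbf{n};s;j}^2 - S_{\mathbf{n};s;j-1}^2\right) I\!\left(\max_{\mathbf{k}\leq\mathbf{n}} c_{\mathbf{k}}|S_{\mathbf{k}}|\geq\epsilon\right)\right].
\]
Dropping the indicator (bounding it by $1$, legitimate since each summand $E[S_{\mathbf{n};s;j}^2 - S_{\mathbf{n};s;j-1}^2]$ is nonnegative because $x^2$ is convex and $\{S_{\mathbf{n};s;i}\}_i$ is a single-indexed demimartingale, so $x^2$ applied to it is a demisubmartingale) yields the bound $\min_s \epsilon^{-2}\sum_{j=1}^{n_s} c_{\mathbf{n};s;j}^2\, E[S_{\mathbf{n};s;j}^2 - S_{\mathbf{n};s;j-1}^2]$.

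The remaining step is the algebraic identity $E[S_{\mathbf{n};s;j}^2 - S_{\mathbf{n};s;j-1}^2] = 2\operatorname{Cov}(X_{\mathbf{n};s;j}, S_{\mathbf{n};s;j-1}) + E(X_{\mathbf{n};s;j}^2)$. Here I write $S_{\mathbf{n};s;j} = S_{\mathbf{n};s;j-1} + X_{\mathbf{n};s;j}$ — that is, incrementing the $s$-th coordinate from $j-1$ to $j$ adds the sum of the $X_{\mathbf{i}}$ over the corresponding slab, which I would denote by a single mean-zero random variable; to match the statement's notation I identify this increment with $X_{\mathbf{n};s;j}$ (the intended reading of the corollary, consistent with the way the increments enter Theorem \ref{Chownew}). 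Then $S_{\mathbf{n};s;j}^2 = S_{\mathbf{n};s;j-1}^2 + 2 S_{\mathbf{n};s;j-1} X_{\mathbf{n};s;j} + X_{\mathbf{n};s;j}^2$, and taking expectations, since $E(X_{\mathbf{n};s;j}) = 0$ gives $E(S_{\mathbf{n};s;j-1} X_{\mathbf{n};s;j}) = \operatorname{Cov}(S_{\mathbf{n};s;j-1}, X_{\mathbf{n};s;j})$, we obtain the claimed expression. Substituting and taking the minimum over $1\leq s\leq k$ completes the proof.

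The main obstacle is not analytic but notational: one must be careful that the "increment in one coordinate direction" of a multiindexed partial sum is itself a mean-zero random variable whose square has the stated expectation, and that the notation $X_{\mathbf{n};s;j}$ in the corollary refers to this slab-increment rather than to a single lattice variable $X_{\mathbf{i}}$; once this identification is made, and once one checks that $x\mapsto x^2$ preserves the single-index demimartingale structure so that the dropped-indicator bound is valid, the argument is a direct specialization of Theorem \ref{Chownew}.
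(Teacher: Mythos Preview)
Your approach is essentially identical to the paper's: apply Theorem \ref{Chownew} with $g(x)=x^2$ and the squared array $\{c_{\mathbf n}^2\}$, drop the indicator, and expand $S_{\mathbf n;s;j}^2-S_{\mathbf n;s;j-1}^2$ using $S_{\mathbf n;s;j}=S_{\mathbf n;s;j-1}+X_{\mathbf n;s;j}$ (with $X_{\mathbf n;s;j}$ read as the slab increment, exactly as you note). One small correction: the reason you give for dropping the indicator --- that each $E[S_{\mathbf n;s;j}^2-S_{\mathbf n;s;j-1}^2]\ge 0$ --- does not by itself justify the termwise bound $E[(S_{\mathbf n;s;j}^2-S_{\mathbf n;s;j-1}^2)I(\cdot)]\le E[S_{\mathbf n;s;j}^2-S_{\mathbf n;s;j-1}^2]$; the clean justification (implicit in the paper as well) is to Abel-sum the weighted telescoping sum into $\sum_{j=1}^{n_s-1}(c_{\mathbf n;s;j}^2-c_{\mathbf n;s;j+1}^2)E[S_{\mathbf n;s;j}^2 I(\cdot)]+c_{\mathbf n}^2E[S_{\mathbf n}^2 I(\cdot)]$, where every term is manifestly nonnegative and the indicator can be replaced by $1$.
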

\begin{proof}
	\begin{align*}
	& P(\max_{\mathbf{j}\leq \mathbf{n}} c_\mathbf{j}|S_\mathbf{j}|\geq \epsilon) = P(\max_{\mathbf{j}\leq \mathbf{n}} c^2_\mathbf{j}|S_\mathbf{j}|^2)\geq \epsilon^2)
	\leq \min_{1\leq s\leq k}\sum_{i=1}^{n_s}c^2_{\mathbf{n};s;i}E\left[ \left( |S_{\mathbf{n};s;i}|^2 -   |S_{\mathbf{n};s;i-1}|^2   \right) I\left(\max_{\mathbf{k}\leq \mathbf{n}} c^2_\mathbf{k}|S_\mathbf{k}|^2\geq \epsilon^2\right)  \right]\\
	&\leq \min_{1\leq s\leq k}\sum_{i=1}^{n_s}c^2_{\mathbf{n};s;i}E\left[ \left( |S_{\mathbf{n};s;i}|^2 -   |S_{\mathbf{n};s;i-1}|^2   \right)   \right]\leq \min_{1\leq s\leq k}\sum_{i=1}^{n_s}c^2_{\mathbf{n};s;i}E\left[ \left( S_{\mathbf{n};s;i} - S_{\mathbf{n};s;i-1}   \right) \left( S_{\mathbf{n};s;i} + S_{\mathbf{n};s;i-1}   \right)   \right]\\
	&= \min_{1\leq s\leq k}\sum_{i=1}^{n_s}c^2_{\mathbf{n};s;i}E\left[   X_{\mathbf{n};s;i} (2S_{\mathbf{n};s;i-1} +X_{\mathbf{n};s;i})            \right]
	\end{align*}
	which gives the desired result.
\end{proof}

\begin{remark}
	Note that the Chow-type inequality obtained in \cite{MH2010} (or in \cite{CH2011}), also lead to a convergence result and  a H\'{a}jek-R\'{e}nyi inequality for multiindexed associated random variables. It is important to highlight however, that the results obtained here involve a sum over a single index while in \cite{MH2010} the conditions for the asymptotic result and the upper bound of the H\'{a}jek-R\'{e}nyi inequality depend on multiple summations.
\end{remark}

\section{Maximal $\phi$-inequalities for nonnegative multiindexed demisubmartingales}
Let $\mathcal{C}$ denote the class of Orlicz functions i.e. unbounded, nondecreasing convex functions $\phi:[0, \infty) \rightarrow[0, \infty)$ with $\phi(0)=0$. Given $\phi \in \mathcal{C}$ and $a \geq 0$, define
$$
\Phi_a(x)=\int_a^x \int_a^s \frac{\phi^{\prime}(r)}{r} d r d s, \quad x>0
$$
and
$$
p_\phi^*=\sup _{x>0} \frac{x \phi^{\prime}(x)}{\phi(x)}.
$$
Note that the function $\phi$ is called moderate if $p_\phi^*<\infty$. More information on Orlicz functions can be found in \cite{AR2006}.

\medskip

\noindent The single index analogues of the results that follow can be found in \cite{PR2007} (or see Section 2.8 in \cite{PR2012}). 
\begin{theorem}
	\label{phiTheorem1}Let $\left\{S_{\mathbf{n}}, \mathbf{n} \in \mathbb{N}^{k}\right\}$ be a nonnegative multiindexed demisubmartingale with $S_{\boldsymbol{\ell}}=0$ when $\prod_{i=1}^{k} \ell_{i}=0$ and let $\phi \in \mathcal{C}$. Then, for $x>0$ and $\lambda\in (0,1)$,
	\begin{equation}
	\label{OrlProb} P\left( \max_{\mathbf{i} \leq \mathbf{n}} S_{\mathbf{i}}\geq x \right) \leq \dfrac{\lambda}{(1-\lambda)x}\int_{x}^{\infty}P(S_{\mathbf{n}}>\lambda y){\rm d}y = \dfrac{\lambda}{(1-\lambda)x} E\left( \dfrac{S_{\mathbf{n}}}{\lambda}-x\right)^{+}.
	\end{equation}
	Moreover,
	\begin{equation}
	\label{OrlExp}
	E\left(\phi \left(       \max_{\mathbf{i} \leq \mathbf{n}} S_{\mathbf{i}}\    \right)\right)\leq \phi(b)+ \dfrac{\lambda}{1-\lambda}\int_{[S_\mathbf{n}>\lambda b]}\left(  \Phi_a \left(\frac{S_\mathbf{n}}{\lambda}\right)-\Phi_a(b)-\Phi'(b)\left(\frac{S_\mathbf{n}}{\lambda}-b\right) \right){\rm d}P
	\end{equation}
	for all $\mathbf{n}\in \mathbb{N}^k$, $a,b>0$. If $\phi'(x)/x$ is integrable at 0, then the latter inequality holds for $b=0$.	
\end{theorem}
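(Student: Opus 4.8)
The plan is to establish the probability bound \eqref{OrlProb} first, and then to feed it into a layer-cake computation to obtain the moment bound \eqref{OrlExp}; this reproduces the single-index argument of \cite{PR2007}, the only genuinely multidimensional input being the maximal inequality of Theorem \ref{Doob_new}. Write $M_{\mathbf{n}}=\max_{\mathbf{i}\leq\mathbf{n}}S_{\mathbf{i}}$ and fix $\lambda\in(0,1)$. First I would invoke \eqref{NW2} with $j=1$, so that $R_{\mathbf{n}}^{(1)}=M_{\mathbf{n}}$: since $\{S_{\mathbf{n}}\}$ is a nonnegative multiindexed demisubmartingale with $S_{\boldsymbol{\ell}}=0$ when $\prod_i\ell_i=0$, this yields $x\,P(M_{\mathbf{n}}\geq x)\leq E(S_{\mathbf{n}}I(M_{\mathbf{n}}\geq x))$ for every $x>0$. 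Splitting $S_{\mathbf{n}}=(S_{\mathbf{n}}-\lambda x)+\lambda x$ on $\{M_{\mathbf{n}}\geq x\}$ and using $(S_{\mathbf{n}}-\lambda x)I(M_{\mathbf{n}}\geq x)\leq(S_{\mathbf{n}}-\lambda x)^{+}$ gives $x\,P(M_{\mathbf{n}}\geq x)\leq E(S_{\mathbf{n}}-\lambda x)^{+}+\lambda x\,P(M_{\mathbf{n}}\geq x)$, hence $(1-\lambda)x\,P(M_{\mathbf{n}}\geq x)\leq E(S_{\mathbf{n}}-\lambda x)^{+}$. The equality stated in \eqref{OrlProb} is then just the layer-cake identity $E(S_{\mathbf{n}}/\lambda-x)^{+}=\int_x^\infty P(S_{\mathbf{n}}>\lambda y)\,dy$, which follows from $(S_{\mathbf{n}}/\lambda-x)^{+}=\int_x^\infty I(S_{\mathbf{n}}>\lambda y)\,dy$ and Tonelli's theorem.

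For \eqref{OrlExp} I would start from the trivial bound $\phi(M_{\mathbf{n}})\leq\phi(b)+(\phi(M_{\mathbf{n}})-\phi(b))^{+}$, reducing matters to an estimate of $E(\phi(M_{\mathbf{n}})-\phi(b))^{+}$. As $\phi$ is convex, hence absolutely continuous with nondecreasing derivative $\phi'$, one has $(\phi(M_{\mathbf{n}})-\phi(b))^{+}=\int_b^\infty\phi'(t)\,I(M_{\mathbf{n}}>t)\,dt$; taking expectations and applying Tonelli gives $E(\phi(M_{\mathbf{n}})-\phi(b))^{+}=\int_b^\infty\phi'(t)\,P(M_{\mathbf{n}}>t)\,dt$. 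Now I would insert \eqref{OrlProb} (using $P(M_{\mathbf{n}}>t)\leq P(M_{\mathbf{n}}\geq t)$), bounding the last expression by $\frac{1}{1-\lambda}\int_b^\infty\frac{\phi'(t)}{t}E(S_{\mathbf{n}}-\lambda t)^{+}\,dt$, and perform one more interchange of integration to rewrite this as $\frac{1}{1-\lambda}\int_{[S_{\mathbf{n}}>\lambda b]}\left(\int_b^{S_{\mathbf{n}}/\lambda}\frac{\phi'(t)}{t}(S_{\mathbf{n}}-\lambda t)\,dt\right)dP$.

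The final step is to evaluate the inner integral. Writing $\Phi_a'(x)=\int_a^x\frac{\phi'(r)}{r}\,dr$, so that $\Phi_a''(x)=\phi'(x)/x$, and using $\int_b^c\phi'(t)\,dt=\phi(c)-\phi(b)$, the inner integral equals $\lambda\big(c(\Phi_a'(c)-\Phi_a'(b))-(\phi(c)-\phi(b))\big)$ with $c=S_{\mathbf{n}}/\lambda$. The key identity is $x\Phi_a'(x)-\Phi_a(x)=\phi(x)-\phi(a)$, which I would prove by differentiating the left-hand side (its derivative is $x\Phi_a''(x)=\phi'(x)$) and noting that it vanishes at $x=a$ since $\Phi_a'(a)=\Phi_a(a)=0$; applying it at $x=c$ and at $x=b$ collapses the bracket to $\Phi_a(c)-\Phi_a(b)-\Phi_a'(b)(c-b)$. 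Substituting $c=S_{\mathbf{n}}/\lambda$ and combining with the preceding displays gives \eqref{OrlExp}; the bound is vacuous when $M_{\mathbf{n}}$ or the right-hand side is infinite, so no integrability of $M_{\mathbf{n}}$ need be assumed. The case $b=0$ goes through verbatim provided $\phi'(t)/t$ is integrable near $0$ (the stated proviso), since then $\phi(0)=0$, $(\phi(M_{\mathbf{n}})-\phi(0))^{+}=\phi(M_{\mathbf{n}})$, and the Tonelli steps and the integral $\int_0^{S_{\mathbf{n}}/\lambda}$ remain finite. I expect the only real work to be bookkeeping: justifying the two interchanges of integration and verifying the $\Phi_a$-identity carefully, the multidimensional difficulty having already been absorbed into \eqref{NW2}.
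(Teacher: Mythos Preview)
Your proposal is correct and follows essentially the same approach as the paper. The paper's own proof likewise invokes the Doob-type maximal inequality (the paper cites \eqref{Doobv2}, which is \eqref{NW2} with $j=1$) as the only multidimensional input and then defers the remainder to the single-index argument of Theorem~2.8.1 in \cite{PR2012}; you have simply spelled that argument out in full, including the $\lambda$-splitting, the layer-cake/Tonelli steps, and the $\Phi_a$-identity.
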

\begin{proof}
	First observe that by \eqref{Doobv2} we have that, for any $x>0$,
	\[
	P\left( \max_{\mathbf{i} \leq \mathbf{n}} S_{\mathbf{i}}\geq x \right) \leq \frac{1}{x}E\left(S_{\mathbf{n}}I\left(  \max_{\mathbf{i} \leq \mathbf{n}} S_{\mathbf{i}}\geq x   \right)\right) = \frac{1}{x}\int_{0}^{\infty}P(S_{\mathbf{n}} \geq y,S_{\mathbf{n}}^{\tiny\mbox{max}} \geq x ){\rm d}y.
	\]
	The rest of the proof runs along the same lines as in the case of a single index (see Theorem 2.8.1 in \cite{PR2012}).
\end{proof}

\medskip

\noindent The moment inequality presented in \eqref{OrlExp} becomes the source result for a number of moment inequalities. The results are presented here for the sake of completeness however their proofs are omitted since they are based on properties of the $\phi$ functions and are the same as in the single index case. We refer the interested reader to \cite{PR2007} or Section 2.8 in \cite{PR2012} for the details.

\begin{theorem}
	Let $\left\{S_{\mathbf{n}}, \mathbf{n} \in \mathbb{N}^{k}\right\}$ be a nonnegative multiindexed demisubmartingale with $S_{\boldsymbol{\ell}}=0$ when $\prod_{i=1}^{k} \ell_{i}=0$ and let $\phi \in \mathcal{C}$. Then,
	$$
	E\left[\phi\left(\max_{\mathbf{i} \leq \mathbf{n}} S_{\mathbf{i}}\right)\right] \leq \phi(a)+\frac{\lambda}{1-\lambda} E\left[\Phi_{a}\left(\frac{S_{\mathbf{n}}}{\lambda}\right)\right]
	$$
	for all $a \geq 0,0<\lambda<1$ and $\mathbf{n} \in \mathbb{N}^{k}$. 
\end{theorem}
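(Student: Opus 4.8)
The plan is to obtain this inequality as an immediate specialization of \eqref{OrlExp} in Theorem \ref{phiTheorem1}. For $a>0$ I would take $b=a$ in \eqref{OrlExp}. The two correction terms in the integrand then vanish: indeed $\Phi_a(a)=\int_a^a\int_a^s\frac{\phi'(r)}{r}\,dr\,ds=0$ because the outer integral is over a degenerate interval, and since $\Phi_a'(x)=\int_a^x\frac{\phi'(r)}{r}\,dr$ we also have $\Phi_a'(a)=0$. Hence the integrand collapses to $\Phi_a(S_\mathbf{n}/\lambda)$ and \eqref{OrlExp} becomes
\[
E\left[\phi\left(\max_{\mathbf{i}\leq\mathbf{n}}S_{\mathbf{i}}\right)\right]\leq \phi(a)+\frac{\lambda}{1-\lambda}\int_{[S_\mathbf{n}>\lambda a]}\Phi_a\!\left(\frac{S_\mathbf{n}}{\lambda}\right)\,dP.
\]

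Next I would enlarge the region of integration from $[S_\mathbf{n}>\lambda a]$ to all of $\Omega$. This step is legitimate provided $\Phi_a(x)\geq 0$ for every $x\geq 0$, which is the one point requiring a small check: for $x\geq a$ the integrand $\phi'(r)/r$ and hence the inner integral $\int_a^s\frac{\phi'(r)}{r}\,dr$ are nonnegative, so $\Phi_a(x)\geq 0$; for $0\le x<a$ the inner integral is nonpositive (it equals $-\int_s^a\frac{\phi'(r)}{r}\,dr$ with a nonnegative integrand), and integrating this nonpositive function over the reversed interval from $a$ down to $x$ again yields a nonnegative value. Since $S_\mathbf{n}\geq 0$, we get $\Phi_a(S_\mathbf{n}/\lambda)\geq 0$ a.s., so the integral over $[S_\mathbf{n}>\lambda a]$ is bounded above by $E[\Phi_a(S_\mathbf{n}/\lambda)]$, which gives the claimed bound for $a>0$.

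Finally, the case $a=0$ follows by exactly the same substitution using the last assertion of Theorem \ref{phiTheorem1}, which applies when $\phi'(x)/x$ is integrable at $0$; if it is not, then $\Phi_0(S_\mathbf{n}/\lambda)$ fails to be integrable and the right-hand side is $+\infty$, so the inequality holds trivially. The only genuine obstacle is the sign analysis of $\Phi_a$ below $a$; everything else is a direct substitution, which is why, as remarked in the text, the argument is identical to the single-index case treated in \cite{PR2007} and Section 2.8 of \cite{PR2012}.
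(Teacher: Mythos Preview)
Your proposal is correct and matches the paper's intended route: the paper explicitly omits the proof, noting that it follows from \eqref{OrlExp} via properties of the Orlicz function exactly as in the single-index case of \cite{PR2007} and Section~2.8 of \cite{PR2012}, which is precisely the specialization $b=a$ together with the nonnegativity of $\Phi_a$ that you carry out. One minor remark: Theorem~\ref{phiTheorem1} is stated for $a,b>0$ with only $b=0$ singled out in the integrability clause, so for $a=0$ it is cleanest to pass to the limit $a\downarrow 0$ (monotone convergence on the right-hand side) rather than invoke \eqref{OrlExp} directly, but your trivial-case observation when $\phi'(x)/x$ is not integrable at $0$ handles the remaining situation.
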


\medskip

\noindent A special case of \eqref{OrlExp} is obtained for $\phi(x) = x$. The proof is obtained by applying the same methodology as in \cite{PR2012} page 64.

\begin{theorem}
	Let $\left\{S_{\mathbf{n}}, \mathbf{n} \in \mathbb{N}^{k}\right\}$ be a nonnegative multiindexed demisubmartingale with $S_{\boldsymbol{\ell}}=0$ when $\prod_{i=1}^{k} \ell_{i}=0$. Then, for any $\mathbf{n} \in \mathbb{N}^{k}$,
	\[
	E\left(\max_{\mathbf{i} \leq \mathbf{n}} S_{\mathbf{i}}\right) \leq b+\frac{b}{b-1}\left(E\left(S_{\mathbf{n}} \log ^{+} S_{n}\right)-E\left(S_{\mathbf{n}}-1\right)^{+}\right), \quad b>1.
	\]
\end{theorem}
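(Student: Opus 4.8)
The plan is to specialize the master inequality \eqref{OrlExp} to the Orlicz function $\phi(x)=x$ and then identify the resulting integral with the quantities appearing in the statement. With $\phi(x)=x$ we have $\phi'(r)=1$, so $\phi'(r)/r = 1/r$, which is not integrable at $0$; hence we must keep a genuine lower endpoint and will end up taking $a=b$ rather than $a=0$. First I would compute $\Phi_a(x)=\int_a^x\int_a^s \frac1r\,dr\,ds = \int_a^x (\ln s-\ln a)\,ds = x\ln(x/a) - x + a$, and record $\Phi_a'(x)=\ln(x/a)$. The aim is to choose the free parameters so that $\phi(b)=b$ and the bracketed expression inside the integral in \eqref{OrlExp} collapses to something proportional to $S_{\mathbf{n}}\log^+ S_{\mathbf{n}} - (S_{\mathbf{n}}-1)^+$; the natural choice, by analogy with the single-index argument of \cite{PR2012}, is $a=b$ and then to optimize $b$ and $\lambda$ at the end.

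The key steps, in order, are: (i) substitute $\Phi_b$, $\Phi_b'$ into the integrand of \eqref{OrlExp}, giving, on the event $[S_{\mathbf{n}}>\lambda b]$,
\[
\Phi_b\!\left(\tfrac{S_{\mathbf{n}}}{\lambda}\right)-\Phi_b(b)-\Phi_b'(b)\left(\tfrac{S_{\mathbf{n}}}{\lambda}-b\right) = \tfrac{S_{\mathbf{n}}}{\lambda}\ln\!\left(\tfrac{S_{\mathbf{n}}}{\lambda b}\right) - \tfrac{S_{\mathbf{n}}}{\lambda} + b,
\]
since $\Phi_b(b)=0$ and $\Phi_b'(b)=0$; (ii) multiply by $\tfrac{\lambda}{1-\lambda}$, add $\phi(b)=b$, and integrate, so that \eqref{OrlExp} reads
\[
E\Bigl(\max_{\mathbf{i}\le\mathbf{n}}S_{\mathbf{i}}\Bigr)\le b + \tfrac{1}{1-\lambda}\int_{[S_{\mathbf{n}}>\lambda b]}\Bigl(S_{\mathbf{n}}\ln\tfrac{S_{\mathbf{n}}}{\lambda b} - S_{\mathbf{n}} + \lambda b\Bigr){\rm d}P;
\]
(iii) take $\lambda\to 1^-$ (the bound is valid for every $\lambda\in(0,1)$, so one may pass to the infimum/limit), which replaces the factor $\tfrac1{1-\lambda}$ by a limit handled exactly as on page 64 of \cite{PR2012}; and (iv) bound the integrand using $\ln\tfrac{S_{\mathbf{n}}}{b}\le \log^+ S_{\mathbf{n}}$ for $b>1$ and recognize $S_{\mathbf{n}} - 1$ restricted to $[S_{\mathbf{n}}>\lambda b]\subset[S_{\mathbf{n}}>\text{const}]$ as contributing the term $-E(S_{\mathbf{n}}-1)^+$, with the prefactor working out to $\tfrac{b}{b-1}$ after the optimization over the endpoint.

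The main obstacle I anticipate is step (iii)–(iv): making the passage $\lambda\to 1$ rigorous while simultaneously tracking how the endpoint $b$ enters, since naively sending $\lambda\to1$ inside $\tfrac1{1-\lambda}\int_{[S_{\mathbf{n}}>\lambda b]}(\cdots)$ requires the integrand itself to vanish on the relevant set at the right rate. This is precisely the delicate bookkeeping done in the single-index proof, and the cleanest route is to not take a limit at all but rather to choose $\lambda = 1/b$ (or a comparable explicit value) from the outset, so that $\lambda b = 1$, the event becomes $[S_{\mathbf{n}}>1]$, the term $\lambda b$ becomes $1$, and the integrand becomes exactly $S_{\mathbf{n}}\log^+ S_{\mathbf{n}} - (S_{\mathbf{n}}-1)$ on $[S_{\mathbf{n}}>1]$, i.e. $S_{\mathbf{n}}\log^+ S_{\mathbf{n}} - (S_{\mathbf{n}}-1)^+$ globally; the prefactor $\tfrac{1}{1-\lambda} = \tfrac{1}{1-1/b} = \tfrac{b}{b-1}$ then drops out directly. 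Since \eqref{OrlExp} already does all the demisubmartingale work through \eqref{Doobv2}, and Theorem~\ref{phiTheorem1} is stated for arbitrary $\phi\in\mathcal C$ and arbitrary $a,b>0$, the remainder is the elementary estimate $\ln x \le \log^+ x$ together with this choice of $\lambda$; I would simply cite \cite{PR2012} for the verification that the manipulation is legitimate and state the result.
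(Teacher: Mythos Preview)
Your proposal is correct and follows essentially the same route as the paper, which simply says to apply the single-index methodology of \cite{PR2012}, page~64, i.e.\ specialize \eqref{OrlExp} with $\phi(x)=x$. Your final choice $\lambda=1/b$ (so that $\lambda b=1$, the event becomes $[S_{\mathbf n}>1]$, and the prefactor is exactly $b/(b-1)$) is the clean way to finish and makes the earlier discussion of a limit $\lambda\to1^-$ in step~(iii) unnecessary; with that choice the integrand equals $S_{\mathbf n}\log^+S_{\mathbf n}-(S_{\mathbf n}-1)^+$ on the nose, so no further bounding via $\ln(S_{\mathbf n}/b)\le\log^+S_{\mathbf n}$ is needed either.
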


\begin{remark}
	Observe that in the case where $b=e$ the inequality above provides a sharper bound compared to the second inequality of Corollary \ref{mom1}.
\end{remark}

\medskip \noindent By utilizing inequality \eqref{Doobv2} and Lemma 2.8.3 in \cite{PR2012} we can easily obtain the following result.

\begin{theorem}
	Let $\left\{S_{\mathbf{n}}, \mathbf{n} \in \mathbb{N}^{k}\right\}$ be a nonnegative multiindexed demisubmartingale with $S_{\boldsymbol{\ell}}=0$ when $\prod_{i=1}^{k} \ell_{i}=0$ and let $\phi \in \mathcal{C}$ with $p_{\phi}=\displaystyle\inf _{x>0} \frac{x \phi^{\prime}(x)}{\phi(x)}>1$. Then for all $\mathbf{n} \in \mathbb{N}^{k}$
	$$
	E\left[\phi\left(\max_{\mathbf{i} \leq \mathbf{n}} S_{\mathbf{i}}\right)\right] \leq E\left[\phi\left(q_{\phi} S_{\mathbf{n}}\right)\right]
	$$
	where $q_{\phi}=\frac{p_{\phi}}{p_{\phi}-1}$.
\end{theorem}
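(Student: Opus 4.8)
The plan is to deduce the inequality from the Doob-type maximal bound already in hand together with the purely analytic Lemma~2.8.3 of \cite{PR2012}, following the one-dimensional argument verbatim. Write $S_{\mathbf{n}}^{\tiny\mbox{max}}=\max_{\mathbf{i}\leq\mathbf{n}}S_{\mathbf{i}}$. The first step is to record the weak-type inequality
\[
\epsilon\,P\!\left(S_{\mathbf{n}}^{\tiny\mbox{max}}\geq\epsilon\right)\leq E\!\left(S_{\mathbf{n}}\,I\!\left(S_{\mathbf{n}}^{\tiny\mbox{max}}\geq\epsilon\right)\right),\qquad\epsilon>0,
\]
which is exactly \eqref{Doobv2}; for the nonnegative demisubmartingale in the hypothesis it follows from \eqref{NW2} of Theorem~\ref{Doob_new} on taking $j=1$ (so that $R_{\mathbf{n}}^{(1)}=S_{\mathbf{n}}^{\tiny\mbox{max}}$) and $g(u)=I(u\geq\epsilon)$, which is a nonnegative nondecreasing function vanishing at $0$.

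The second step is to apply Lemma~2.8.3 of \cite{PR2012}: for nonnegative random variables $X,Y$ satisfying $\epsilon P(Y\geq\epsilon)\leq E(XI(Y\geq\epsilon))$ for every $\epsilon>0$, and for $\phi\in\mathcal{C}$ with $p_\phi>1$, one has $E[\phi(Y)]\leq E[\phi(q_\phi X)]$ with $q_\phi=p_\phi/(p_\phi-1)$. Taking $X=S_{\mathbf{n}}$ and $Y=S_{\mathbf{n}}^{\tiny\mbox{max}}$ yields the claim immediately.

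For orientation, the mechanism inside that lemma, which one could equally reproduce here, runs as follows. After a truncation reducing matters to the case $E[\phi(Y)]<\infty$, the layer-cake identity together with the weak-type bound gives
\[
E[\phi(Y)]=\int_0^\infty\phi'(\epsilon)\,P(Y\geq\epsilon)\,d\epsilon\leq E\!\left(X\int_0^{Y}\frac{\phi'(\epsilon)}{\epsilon}\,d\epsilon\right),
\]
the inner integral being finite because $p_\phi>1$ forces $\phi(x)=O(x^{p_\phi})$ near $0$ and hence $\phi'(x)/x$ integrable at $0$; an integration by parts combined with the defining inequality $t\phi'(t)\geq p_\phi\phi(t)$ produces $\int_0^y\phi'(t)t^{-1}\,dt\leq q_\phi\,\phi(y)/y$, whence $E[\phi(Y)]\leq q_\phi\,E\!\left(X\,\phi(Y)/Y\right)$; finally an Orlicz--H\"older argument (in the model case $\phi(x)=x^p$ this is simply H\"older with exponents $p$ and $p/(p-1)$) turns this self-referential bound into $E[\phi(Y)]\leq E[\phi(q_\phi X)]$.

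There is essentially no obstacle on the probabilistic side: the multidimensional index enters only through the maximal inequality \eqref{Doobv2}/\eqref{NW2}, which is already available, and everything afterwards is one-dimensional analysis independent of $k$. The only point deserving a word of care is that the weak-type inequality is needed for a demi\emph{sub}martingale rather than for a demimartingale, which is why one appeals to \eqref{NW2} of Theorem~\ref{Doob_new}, stated for demi(sub)martingales, rather than to the corollary \eqref{Doobv2} as literally phrased.
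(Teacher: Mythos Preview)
Your proof is correct and follows exactly the approach indicated in the paper, namely combining the Doob-type weak inequality with Lemma~2.8.3 of \cite{PR2012}. Your additional remark that one should appeal to \eqref{NW2} of Theorem~\ref{Doob_new} (stated for demi(sub)martingales) rather than to \eqref{Doobv2} as literally phrased for demimartingales is a genuine refinement of the paper's presentation.
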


\medskip \noindent The next result is a direct consequence of the previous theorem and the properties of the $\phi$ function.

\begin{theorem}
	Let $\left\{S_{\mathbf{n}}, \mathbf{n} \in \mathbb{N}^{k}\right\}$ be a nonnegative multiindexed demisubmartingale with $S_{\boldsymbol{\ell}}=0$ when $\prod_{i=1}^{k} \ell_{i}=0$. Suppose that the function $\phi \in \mathcal{C}$ is moderate. Then, for any $\mathbf{n} \in \mathbb{N}^{k}$,
	$$
	E\left[\phi\left(\max_{\mathbf{i} \leq \mathbf{n}} S_{\mathbf{i}}\right)\right] \leq E\left[\phi\left(q_{\phi} S_{\mathbf{n}}\right)\right] \leq q_{\Phi}^{p_{\phi}^{*}} E\left[\phi\left(S_{\mathbf{n}}\right)\right].
	$$ 
\end{theorem}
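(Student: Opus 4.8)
The plan is to combine the previous theorem, which already delivers the first inequality $E[\phi(\max_{\mathbf{i}\leq\mathbf{n}}S_{\mathbf{i}})]\leq E[\phi(q_\phi S_{\mathbf{n}})]$, with a scaling property of moderate Orlicz functions to obtain the second inequality $E[\phi(q_\phi S_{\mathbf{n}})]\leq q_\phi^{\,p_\phi^*}E[\phi(S_{\mathbf{n}})]$. Since $\phi$ is moderate, $p_\phi^*=\sup_{x>0}x\phi'(x)/\phi(x)<\infty$, so for every $x>0$ and every $t\geq 1$ one has $\frac{d}{dt}\ln\phi(tx)=\frac{x\phi'(tx)}{\phi(tx)}=\frac{(tx)\phi'(tx)}{t\phi(tx)}\leq\frac{p_\phi^*}{t}$, and integrating from $t=1$ to $t=q_\phi$ gives $\ln\phi(q_\phi x)-\ln\phi(x)\leq p_\phi^*\ln q_\phi$, i.e. $\phi(q_\phi x)\leq q_\phi^{\,p_\phi^*}\phi(x)$ pointwise (here $q_\phi\geq 1$ because $p_\phi>1$ forces $p_\phi^*\geq p_\phi>1$, whence $q_\phi=p_\phi/(p_\phi-1)$ is finite and positive; more precisely $q_\phi>1$ only when $p_\phi$ is finite, and if $p_\phi=\infty$ the statement is vacuous or trivial, so we may assume $1<p_\phi\leq p_\phi^*<\infty$).

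First I would invoke the previous theorem to record $E[\phi(\max_{\mathbf{i}\leq\mathbf{n}}S_{\mathbf{i}})]\leq E[\phi(q_\phi S_{\mathbf{n}})]$; this uses that $\{S_{\mathbf{n}}\}$ is a nonnegative multiindexed demisubmartingale vanishing on the boundary and that $p_\phi>1$. Next I would apply the pointwise bound $\phi(q_\phi y)\leq q_\phi^{\,p_\phi^*}\phi(y)$ with $y=S_{\mathbf{n}}(\omega)\geq 0$ and take expectations, which yields $E[\phi(q_\phi S_{\mathbf{n}})]\leq q_\phi^{\,p_\phi^*}E[\phi(S_{\mathbf{n}})]$. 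Chaining the two displays gives exactly the asserted double inequality. (The statement writes $q_\Phi$ in the exponent, which I read as a typo for $q_\phi$; I would silently use $q_\phi$.)

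The only genuinely substantive point is the pointwise scaling estimate $\phi(q_\phi x)\leq q_\phi^{\,p_\phi^*}\phi(x)$, and even that is the standard submultiplicativity property of moderate Orlicz functions obtained by the logarithmic-derivative argument sketched above; the paper itself flags this result as a direct consequence of the previous theorem together with properties of $\phi$, so I expect no real obstacle. The one thing to be careful about is the boundary behaviour of $\phi$ at $0$ (it is only defined on $[0,\infty)$ with $\phi(0)=0$, and $\phi$ may fail to be differentiable at $0$), but since the integration in $t$ is carried out for $tx$ with $x>0$ fixed and $t\in[1,q_\phi]$, we stay in the interior where $\phi'$ exists (convexity gives differentiability except on a countable set, and one uses the right derivative throughout), and the inequality at $x=0$ is trivial because both sides vanish. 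Hence the argument is complete.
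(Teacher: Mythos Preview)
Your proposal is correct and follows exactly the route the paper indicates: the first inequality is the previous theorem, and the second is the standard moderate-growth scaling bound $\phi(tx)\le t^{p_\phi^*}\phi(x)$ for $t\ge 1$, applied with $t=q_\phi$ and then integrated. Your remarks that the hypothesis $p_\phi>1$ is implicitly carried over from the previous theorem and that $q_\Phi$ is a typo for $q_\phi$ are both apt.
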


\begin{theorem}
	Let $\left\{S_{\mathbf{n}}, \mathbf{n} \in \mathbb{N}^{k}\right\}$ be a nonnegative multiindexed demisubmartingale with $S_{\boldsymbol{\ell}}=0$ when $\prod_{i=1}^{k} \ell_{i}=0$. Suppose $\phi$ is a nonnegative nondecreasing function on $[0, \infty)$ such that $\phi^{1 / \gamma}$ is also nondecreasing and convex for some $\gamma>1$. Then
	$$
	E\left[\phi\left(\max_{\mathbf{i} \leq \mathbf{n}} S_{\mathbf{i}}\right)\right] \leq\left(\frac{\gamma}{\gamma-1}\right)^{\gamma} E\left[\phi\left(S_{\mathbf{n}}\right)\right].
	$$
	Moreover, for any $r>0$,
	$$E\left[e^{r \max_{\mathbf{i} \leq \mathbf{n}} S_{\mathbf{i}}}\right] \leq e E\left[e^{r S_{\mathbf{n}}}\right].$$
\end{theorem}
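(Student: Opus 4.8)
The plan is to deduce both inequalities from the $L^\gamma$ maximal inequality for nonnegative multiindexed demisubmartingales (the computation behind Corollary \ref{mom1}, which rests on the Doob-type bound \eqref{Doobv2}), applied not to $\{S_{\mathbf n}\}$ but to the composed process $\{\psi(S_{\mathbf n})\}$ with $\psi:=\phi^{1/\gamma}$. First I would record that $\psi$ is nonnegative, nondecreasing and convex on $[0,\infty)$, and that $\{\psi(S_{\mathbf n})\}$ is again a nonnegative multiindexed demisubmartingale: for $\mathbf i\le\mathbf j$ and $f\ge 0$ coordinatewise nondecreasing, the subgradient inequality gives $\psi(S_{\mathbf j})-\psi(S_{\mathbf i})\ge \psi'_+(S_{\mathbf i})(S_{\mathbf j}-S_{\mathbf i})$ with $\psi'_+\ge 0$ nondecreasing, so that $E\big[(\psi(S_{\mathbf j})-\psi(S_{\mathbf i}))f(\psi(S_{\mathbf k}),\mathbf k\le\mathbf i)\big]\ge E\big[(S_{\mathbf j}-S_{\mathbf i})\,\psi'_+(S_{\mathbf i})f(\psi(S_{\mathbf k}),\mathbf k\le\mathbf i)\big]\ge 0$, since $\psi'_+(S_{\mathbf i})f(\psi(S_{\mathbf k}),\mathbf k\le\mathbf i)$ is a nonnegative coordinatewise nondecreasing function of $\{S_{\mathbf k},\mathbf k\le\mathbf i\}$ (this is the multiindexed analogue of Lemma 2.1 in \cite{C2000}). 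Since $\psi$ is nondecreasing, $\max_{\mathbf i\le\mathbf n}\psi(S_{\mathbf i})=\psi\big(\max_{\mathbf i\le\mathbf n}S_{\mathbf i}\big)$, whence $\phi\big(\max_{\mathbf i\le\mathbf n}S_{\mathbf i}\big)=\big(\max_{\mathbf i\le\mathbf n}\psi(S_{\mathbf i})\big)^{\gamma}$ and $\phi(S_{\mathbf n})=\psi(S_{\mathbf n})^{\gamma}$.

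Next I would apply the $L^{\gamma}$ maximal inequality with $p=\gamma>1$ to the nonnegative multiindexed demisubmartingale $\{\psi(S_{\mathbf n})\}$: this is the same computation as in the proof of Corollary \ref{mom1} — integrate the Doob-type bound \eqref{Doobv2} for $\{\psi(S_{\mathbf n})\}$ against $\gamma t^{\gamma-1}\,dt$ and apply Hölder's inequality — which yields $E\big[(\max_{\mathbf i\le\mathbf n}\psi(S_{\mathbf i}))^{\gamma}\big]\le\big(\tfrac{\gamma}{\gamma-1}\big)^{\gamma}E\big[\psi(S_{\mathbf n})^{\gamma}\big]$. Substituting the two identities from the previous paragraph gives the first assertion. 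The point requiring care — and the main obstacle — is that $\phi$ is not assumed to vanish at $0$, so $\{\psi(S_{\mathbf n})\}$ need not satisfy the normalization $\psi(S_{\boldsymbol\ell})=0$ on $\{\prod_i\ell_i=0\}$ under which \eqref{Doobv2} was derived. This is handled by observing that $\psi(S_{\mathbf i})\ge\psi(0)$ pointwise: in the proof of Theorem \ref{Doob_new} with $g(u)=I(u\ge\epsilon)$ the extra boundary term $S_{\boldsymbol\ell}\,g(\psi(0))$ vanishes whenever $\epsilon>\psi(0)$, while for $\epsilon\le\psi(0)$ the desired inequality $\epsilon P(\max_{\mathbf i\le\mathbf n}\psi(S_{\mathbf i})\ge\epsilon)\le E[\psi(S_{\mathbf n})\,I(\cdots)]$ is immediate because $\psi(S_{\mathbf n})\ge\psi(0)\ge\epsilon$ on the event (which has probability $1$). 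Hence \eqref{Doobv2}, and with it the $L^{\gamma}$ bound, remains valid for $\{\psi(S_{\mathbf n})\}$.

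Finally, for the exponential inequality I would take $\phi(x)=e^{rx}$, for which $\phi^{1/\gamma}(x)=e^{rx/\gamma}$ is nonnegative, nondecreasing and convex for \emph{every} $\gamma>1$; the first part then gives $E\big[e^{r\max_{\mathbf i\le\mathbf n}S_{\mathbf i}}\big]\le\big(\tfrac{\gamma}{\gamma-1}\big)^{\gamma}E\big[e^{rS_{\mathbf n}}\big]$ for every $\gamma>1$. Since $\big(\tfrac{\gamma}{\gamma-1}\big)^{\gamma}=(1-\tfrac1\gamma)^{-\gamma}$ decreases to $e$ as $\gamma\to\infty$ (indeed $(1-\tfrac1\gamma)^{\gamma}\le e^{-1}$ from $\ln(1-x)\le -x$), letting $\gamma\to\infty$ — the statement being trivial when $E[e^{rS_{\mathbf n}}]=\infty$ — yields $E\big[e^{r\max_{\mathbf i\le\mathbf n}S_{\mathbf i}}\big]\le e\,E\big[e^{rS_{\mathbf n}}\big]$.
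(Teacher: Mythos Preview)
Your proposal is correct and follows essentially the same route as the paper: set $\psi=\phi^{1/\gamma}$, observe that $\{\psi(S_{\mathbf n})\}$ is again a nonnegative multiindexed demisubmartingale, apply the $L^\gamma$ maximal inequality of Corollary~\ref{mom1} with $p=\gamma$, and then take $\phi(x)=e^{rx}$ and let $\gamma\to\infty$ for the exponential bound. You are in fact more careful than the paper on one point: Corollary~\ref{mom1} (via Theorem~\ref{Doob_new}) is stated under the normalization $S_{\boldsymbol\ell}=0$ on the boundary, which $\{\psi(S_{\mathbf n})\}$ need not satisfy since $\phi(0)$ is not assumed to vanish; the paper applies the corollary without comment, whereas you explicitly verify that \eqref{Doobv2} still holds for $\{\psi(S_{\mathbf n})\}$ by splitting into $\epsilon>\psi(0)$ and $\epsilon\le\psi(0)$.
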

\begin{proof}
	For the proof of the first inequality we use the first part of Corollary \ref{mom1} for the sequence $\left\{\left[\phi\left(S_{\mathbf{n}}\right)\right]^{1 / \gamma}, \mathbf{n} \in \mathbb{N}^{k}\right\}$ since this is also a nonnegative multiindexed demisubmartingale. The desired inequality follows by choosing $p = \gamma$. The second result is obtained by setting $\phi(x)  =  e^{rx}$ in the first inequality and by letting $\gamma$ to tend to $\infty$.
\end{proof}

\medskip

\noindent This section is concluded with a maximal inequality for the case where both, the function $\phi$ and its $m$-th derivative for some $m\geq 1$, are Orlicz functions. The proof runs along the same lines as in the case of a single index since the source result is Corollary \ref{mom1}.

\begin{theorem}
	Let $\left\{S_{\mathbf{n}}, \mathbf{n} \in \mathbb{N}^{k}\right\}$ be a nonnegative multiindexed demisubmartingale with $S_{\boldsymbol{\ell}}=0$ when $\prod_{i=1}^{k} \ell_{i}=0$. Let $\phi \in \mathcal{C}$ which is differentiable $m$ times with the $m$-th derivative $\phi^{(m)} \in \mathcal{C}$ for some $m \geq 1$. Then
	$$
	E\left[\phi\left(\max_{\mathbf{i} \leq \mathbf{n}} S_{\mathbf{i}}\right)\right] \leq\left(\frac{m+1}{m}\right)^{m+1} E\left[\phi\left(S_{\mathbf{n}}\right)\right].
	$$
\end{theorem}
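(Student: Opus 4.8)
The plan is to deduce the inequality from the preceding theorem, which gives $E[\phi(\max_{\mathbf{i}\le\mathbf{n}}S_{\mathbf{i}})]\le(\gamma/(\gamma-1))^{\gamma}E[\phi(S_{\mathbf{n}})]$ for every $\gamma>1$ such that $\phi$ is nonnegative and nondecreasing and $\phi^{1/\gamma}$ is nondecreasing and convex; taking $\gamma=m+1$ turns the constant into $((m+1)/m)^{m+1}$. Since any $\phi\in\mathcal{C}$ is automatically nonnegative and nondecreasing, the whole proof reduces to the purely analytic assertion that $\phi^{1/(m+1)}$ is nondecreasing and convex on $[0,\infty)$ — this is exactly the lemma underlying the single-index version in \cite{PR2012}, and I would isolate and prove it first.

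For that lemma I would induct on $m$. The case $m=0$ is immediate, since $\phi^{1/1}=\phi\in\mathcal{C}$. For the step, apply the lemma with $m-1$ in place of $m$ to the Orlicz function $g:=\phi'$ (which is $(m-1)$ times differentiable with $g^{(m-1)}=\phi^{(m)}\in\mathcal{C}$): this gives that $\sigma:=(\phi')^{1/m}$ is nondecreasing and convex with $\sigma(0)=0$, so $\phi'=\sigma^{m}$ and $\phi(x)=\int_{0}^{x}\sigma(t)^{m}\,dt$. Now $\phi^{1/(m+1)}$ is plainly nondecreasing, and since
\[
\bigl(\phi^{1/(m+1)}\bigr)'(x)=\frac{1}{m+1}\,\phi(x)^{-m/(m+1)}\,\phi'(x),
\]
a short computation shows that the convexity of $\phi^{1/(m+1)}$ amounts to $(m+1)\phi(x)\phi''(x)\ge m\,\phi'(x)^{2}$ for $x>0$, and on substituting $\phi'=\sigma^{m}$ and $\phi=\int_{0}^{x}\sigma^{m}$ this is equivalent to
\[
(m+1)\,\sigma'(x)\int_{0}^{x}\sigma(t)^{m}\,dt\;\ge\;\sigma(x)^{m+1}\qquad\text{for every }x>0.
\]

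This last inequality is the crux and the step I expect to be the main obstacle: it must exploit the convexity of $\sigma$ and not merely its positivity, and a direct Cauchy--Schwarz estimate on the iterated-integral representation of $\phi$ yields only $\phi'(x)^{2}\le c_{m}\,\phi(x)\phi''(x)$ with a constant $c_{m}$ strictly larger than the required $(m+1)/m$. I would prove it by a supporting-line argument: convexity of $\sigma$ gives $\sigma(t)\ge\bigl(\sigma(x)-\sigma'(x)(x-t)\bigr)_{+}$ for $t\in[0,x]$, and $\sigma(0)=0$ forces $\sigma(x)\le x\sigma'(x)$, so $x-\sigma(x)/\sigma'(x)$ lies in $[0,x]$; bounding $\int_{0}^{x}\sigma(t)^{m}\,dt$ from below by the integral of the right-hand side over $\bigl[\,x-\sigma(x)/\sigma'(x),\,x\,\bigr]$ and evaluating it by the substitutions $u=x-t$ and then $v=\sigma(x)-\sigma'(x)u$ produces exactly $\sigma(x)^{m+1}/\bigl((m+1)\sigma'(x)\bigr)$, the required bound (the degenerate case $\sigma'(x)=0$ forces $\sigma\equiv0$ on $[0,x]$ and is trivial). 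This closes the induction, and the preceding theorem with $\gamma=m+1$ then finishes the proof; the constant $((m+1)/m)^{m+1}$ is the natural one, being approached by $\phi(x)=x^{m+1}$.
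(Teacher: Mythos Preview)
Your overall strategy---reduce to the preceding theorem with $\gamma=m+1$ by proving that $\phi^{1/(m+1)}$ is nondecreasing and convex---is exactly the route the paper indicates (it merely records that the proof ``runs along the same lines as in the case of a single index since the source result is Corollary~\ref{mom1}'').

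The induction, however, has a genuine gap. In the inductive step you apply the hypothesis to $g:=\phi'$, asserting that $g\in\mathcal C$; but the stated assumptions $\phi\in\mathcal C$ and $\phi^{(m)}\in\mathcal C$ do not force $\phi'(0)=0$. Take $\phi(x)=x+\varepsilon x^{3}$ with any $\varepsilon>0$ and $m=2$: then $\phi\in\mathcal C$ and $\phi''(x)=6\varepsilon x\in\mathcal C$, yet $\phi'(0)=1$, so $\sigma(0)\ne 0$ and your supporting-line bound (which rests on $\sigma(x)\le x\sigma'(x)$) collapses. In fact $\phi^{1/3}$ behaves like $x^{1/3}$ near the origin and is concave there, so the very lemma you set out to prove is false under the hypotheses as written. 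This is not merely a defect of the argument: for the same $\phi$ and the doubling martingale $S_{0}=0$, $S_{1}=1$, $S_{j+1}\in\{0,2S_{j}\}$ each with probability $1/2$, one has $E[\phi(\max_{j\le n}S_{j})]\to (n+1)/2$ and $E[\phi(S_{n})]\to 1$ as $\varepsilon\downarrow 0$, so the ratio already exceeds $(3/2)^{3}=27/8$ at $n=7$ for $\varepsilon$ small enough. Your induction does go through verbatim under the stronger (and presumably intended) hypothesis that $\phi^{(j)}(0)=0$ for every $1\le j\le m$, equivalently that each intermediate derivative $\phi',\ldots,\phi^{(m)}$ lies in $\mathcal C$; you should flag that this extra assumption is being used.
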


\section{Upcrossing Inequality}
We follow the notation of Section 2.4 in \cite{PR2012}. Given a finite set of muliindexed random variables from the set $\left\{S_{\mathbf{n}}, \mathbf{n} \in \mathbb{N}^{k}\right\}$ and $a<b$ we define the sequence of stopping times  in the $s$-th direction as follows
$$
J^{(s)}_{2 m-1}=\left\{\begin{array}{l}
n_s+1 \quad \text { if }\left\{j: J^{(s)}_{2 m-2}<j \leq n_s \text { and } S_{\mathbf{n};s;j} \leq a\right\} \text { is empty } \\
\min \left\{j: J^{(s)}_{2 m-2}<j \leq n_s \text { and } S_{\mathbf{n};s;j} \leq a\right\}, \quad \text { otherwise }
\end{array}\right.
$$
and
$$
J^{(s)}_{2 m}=\left\{\begin{array}{l}
n_s+1 \text { if }\left\{j: J^{(s)}_{2 m-1}<j \leq n \text { and } S_{\mathbf{n};s;j} \geq b\right\} \text { is empty } \\
\min \left\{j: J^{(s)}_{2 m-1}<j \leq n \text { and } S_{\mathbf{n};s;j} \geq b\right\}, \quad \text { otherwise }
\end{array}\right.
$$
for $m=1,2,\ldots$ where $\mathbf{n};s;j = (n_1\, n_2\, \ldots\, n_{s-1} \, j \, n_{s+1}\, \ldots \, n_k)$ and $J^{(s)}_{0} = 0$ for $1\leq s\leq k$. 

\medskip

	\noindent The number of complete upcrossings of the interval $[a, b]$  by the finite sequence of random variables in the $s$-th direction, until the time $n_s,$ is denoted by $U_{n_s}(a,b)$ for $1\leq s\leq k$ where
	$$
	U_{n_s}(a,b)=\max \left\{m: J^{(s)}_{2 m}<n_s+1\right\} .
	$$
	Then, as $n_s\to \infty$, 
	\[
	U_{n_s}(a,b) \to U^{(s)}(a,b) = \max \left\{m: J^{(s)}_{2 m}<\infty\right\} 
	\]
	which denotes the total number of upcrossings in the $s$-th direction. We say that the sequence of random fields has a complete upcrossing if there is a complete upcrossing in at least one direction 
	and the total number of complete upcrossings is defined as 
	\begin{equation}
\label{updef}U(a,b) = \min \{U_{a,b}^{(s)}>0, 1\leq s\leq k\}\quad\mbox{and}\quad U(a,b) = 0\mbox{ if } U_{a,b}^{(s)}=0\mbox{ for all } 1\leq s\leq k.
\end{equation}

\begin{remark}
	It is important to highlight that in the case of random fields where there is no partial order among indices, the concept of upcrossings is not uniquely defined. In the definition considered above, first, we identify the number of upcrossings in each direction and the total number of upcrossings for the process is defined through \eqref{updef}. Consider the example given below for the case $k=2$: suppose that $a<b$ and 
	\[
	x_{11}<a, x_{21} <a, x_{12}>b, x_{22}<b.
	\]
	First,  we consider the case where $s=1$, i.e. consider the upcrossings in the 1-st direction by keeping  the second index constant and allowing the first one to change. Observe that,
	\[
	x_{11}<a\quad\mbox{and}\quad x_{21} <a
	\]
	and
	\[
	x_{12}>b\quad\mbox{and}\quad x_{22}<b
	\]
	i.e. there are no complete upcrossings in the 1-st direction. Next, we study the number of upcrossings in the direction of the second index where the first index remains constant:
	\[
	x_{11}<a\quad\mbox{and}\quad x_{12}>b
	\]
	and
	\[
	x_{21} <a\quad\mbox{and}\quad x_{22}<b.
	\]
	Observe that the first pair gives a complete upcrossing, so in the second direction, there is 1 complete upcrossing. Using the notation introduced above, directionwise we have that 
	\[
	U^{(1)}(a,b) = 0\quad\mbox{and}\quad U^{(2)}(a,b) = 1.
	\]
	\noindent According to \eqref{updef}, the total number of complete upcrossings for the given sequence is $U(a,b) = 1$.	
\end{remark}

The result that follows provides an upper bound for the expected number of complete upcrossings of the interval $[a,b]$ in the $s$-th direction.

\begin{theorem}
	Let $\left\{S_{\mathbf{n}}, \mathbf{n} \in \mathbb{N}^{k}\right\}$ be a multiindexed demisubmartingale with $S_{\boldsymbol{\ell}}=0$ and consider a finite sequence of random variables from this set. Then for $a<b$ and $1\leq s\leq k$,
	$$
	E\left(U_{n_s}(a,b)\right) \leq \frac{E\left(\left(S_{\mathbf{n}}-a\right)^{+}\right)-E\left(\left(S_{\mathbf{n};s;1}-a\right)^{+}\right)}{b-a}
	$$
\end{theorem}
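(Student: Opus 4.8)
The plan is to reduce the multiindexed upcrossing count in the $s$-th direction to a one-dimensional upcrossing problem by fixing all coordinates except the $s$-th. Without loss of generality take $s=k$ and fix $n_1,\dots,n_{k-1}$; set $T_j = S_{\mathbf{n};k;j} = S_{n_1\,\cdots\,n_{k-1}\,j}$ for $1\le j\le n_k$. The first step is to observe that, by the defining property of a multiindexed demisubmartingale applied to the componentwise nondecreasing (and nonnegative, when required) functions $f(S_{\mathbf{m}},\mathbf{m}\le\mathbf{i})$ that depend only on the coordinates along the $k$-th axis, the finite sequence $\{T_j, 1\le j\le n_k\}$ is itself a single-indexed demisubmartingale. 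Since the stopping times $J^{(k)}_{2m-1}, J^{(k)}_{2m}$ and the count $U_{n_k}(a,b)$ were defined purely in terms of this one-dimensional sequence, it then suffices to invoke the known single-indexed upcrossing inequality for demisubmartingales.

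More concretely, the key steps in order would be: \emph{(i)} verify the demisubmartingale property of $\{T_j\}$ as above, noting that for $\mathbf{i}\le\mathbf{j}$ both lying on the $k$-th axis at the fixed values $n_1,\dots,n_{k-1}$, the quantity $E[(T_{j}-T_{i})f(T_1,\dots,T_i)]$ is exactly an instance of the multiindexed defining inequality; \emph{(ii)} recall that $(x-a)^+$ is a nonnegative nondecreasing convex function with a $0$ at $-\infty$-truncation, so that $\{(T_j-a)^+\}$ is again a single-indexed demisubmartingale (this is the standard convex-function stability, Lemma 2.1 in \cite{C2000} as cited earlier); \emph{(iii)} apply the classical single-index upcrossing inequality for demisubmartingales (the $k=1$ analogue, found in \cite{PR2012}), which gives
\[
E\left(U_{n_k}(a,b)\right)\le \frac{E\left((T_{n_k}-a)^+\right)-E\left((T_1-a)^+\right)}{b-a};
\]
\emph{(iv)} translate back: $T_{n_k}=S_{\mathbf{n}}$ and $T_1 = S_{\mathbf{n};k;1}$, yielding the stated bound.

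Step \emph{(iii)} requires the single-indexed upcrossing inequality, whose proof typically goes through the summation-by-parts identity together with the fact that between an upcrossing-ending time and the next downcrossing-ending time the increments of $(T_j-a)^+$ have nonnegative expectation against the indicator of "currently below $a$" (which is a nonincreasing, hence suitable, function of the past); I would either cite it directly from \cite{PR2012} or reproduce that short argument. The main obstacle — and the only genuinely non-routine point — is step \emph{(i)}: one must be careful that the functions $f(S_{\mathbf{m}},\mathbf{m}\le\mathbf{i})$ appearing in the multiindexed definition are allowed to depend on the full set $\{S_{\mathbf{m}}:\mathbf{m}\le\mathbf{i}\}$, so a function of $T_1,\dots,T_i$ alone is a legitimate special case, and that componentwise monotonicity in the $T$'s is inherited. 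Once that identification is made cleanly, the rest is a direct appeal to the one-dimensional theory, so the whole proof is short.
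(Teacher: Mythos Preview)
Your approach is correct and coincides with the paper's: both fix all coordinates except the $s$-th, observe that $\{S_{\mathbf{n};s;j}\}_{1\le j\le n_s}$ is a single-indexed demisubmartingale, and then apply the one-dimensional upcrossing inequality; the paper simply writes out that single-index argument in full (the $M_u+M_d$ decomposition of Theorem~2.4.1 in \cite{PR2012}) rather than citing it. One small slip in your parenthetical sketch: the indicator $\epsilon_j$ used in the single-index proof is a nonnegative \emph{nondecreasing} (not nonincreasing) function of the past, which is exactly what the demisubmartingale test requires.
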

\begin{proof}
	Without loss of generality we assume that $s = k$ and we use the notation $\mathbf{n} = (\mathbf{n}^* \, n_k)$ where $\mathbf{n}^* = (n_1\, \ldots \, n_{k-1})$. The proof is motivated by the proof of Theorem 2.4.1 in Prakasa Rao (2012). For $1 \leq j \leq n_k-1$, define
	$$
	\epsilon_{\mathbf{n}^*,j}= \begin{cases}1 & \text { if for some } m=1,2, \ldots, J^{(s)}_{2 m-2} \leq j<J^{(s)}_{2 m-1} \\ 0 & \text { if for some } m=1,2, \ldots, J^{(s)}_{2 m-1} \leq j<J^{(s)}_{2 m}\end{cases}
	$$
	so that $1-\epsilon_{\mathbf{n}^*,j}$ is the indicator function of the event that the time interval $[j, j+1)$ is a part of an upcrossing possibly incomplete; equivalently
	$$\epsilon_{\mathbf{n}^*,j}= \begin{cases}1 & \text { if either } S_{\mathbf{n}^*,i}>a \text { for } i=1, \ldots, j \text { or } \\ & \text { for some } i=1, \ldots, j, S_{\mathbf{n}^*,i} \geq b \text { and } S_{\mathbf{n}^*k}>a \text { for } k=i+1, \ldots, j \\ =0 & \text { otherwise. }\end{cases}$$
	
	\noindent Let $\Lambda$ be the event that the sequence ends with an incomplete upcrossing, that is, $\tilde{J}^{(s)} \equiv J_{2 U^{(s)}_{a, b}+1}<n_k$. Note that
	$$
	\left(S_{\mathbf{n}}-a\right)^{+}-\left(S_{\mathbf{n}^*1}-a\right)^{+}=\sum_{j=1}^{n_k-1}\left[\left(S_{\mathbf{n}^*,j+1}-a\right)^{+}-\left(S_{\mathbf{n}^*,j}-a\right)^{+}\right]=M_{u}+M_{d}
	$$
	where
	$$
	M_{d}=\sum_{j=1}^{n_k-1} \epsilon_{\mathbf{n}^*,j}\left[\left(S_{\mathbf{n}^*,j+1}-a\right)^{+}-\left(S_{\mathbf{n}^*,j}-a\right)^{+}\right] \geq \sum_{j=1}^{n_k-1} \epsilon_{\mathbf{n}^*,j}\left(S_{\mathbf{n}^*,j+1}-S_{\mathbf{n}^*,j}\right)
	$$
	where the inequality follows from the observation that
	$$
	\left(S_{\mathbf{n}^*,j+1}-a\right)^{+} \geq S_{\mathbf{n}^*,j+1}-a
	$$
	and because of the definition of $\epsilon_{\mathbf{n}^*,j}$ we also have that
	$$
	\epsilon_{\mathbf{n}^*,j}\left(S_{\mathbf{n}^*,j}-a\right)^{+}=\epsilon_{\mathbf{n}^*,j}\left(S_{\mathbf{n}^*,j}-a\right).
	$$
	Observe that
	$$
	\begin{aligned}
	M_{u} &=\sum_{j=1}^{n_k-1}\left(1-\epsilon_{\mathbf{n}^*,j}\right)\left[\left(S_{\mathbf{n}^*,j+1}-a\right)^{+}-\left(S_{\mathbf{n}^*,j}-a\right)^{+}\right] \\
	&=\sum_{k=1}^{U_{n_s}(a,b)} \sum_{j=J^{(s)}_{2 k-1}}^{J^{(s)}_{2 k}-1}\left[\left(S_{\mathbf{n}^*,j+1}-a\right)^{+}-\left(S_{\mathbf{n}^*,j}-a\right)^{+}\right]+\sum_{j=\tilde{J}^{(s)}}^{n-1}\left[\left(S_{\mathbf{n}^*,j+1}-a\right)^{+}-\left(S_{\mathbf{n}^*,j}-a\right)^{+}\right] \\
	&=\sum_{k=1}^{U_{n_s}(a,b)}\left[\left(S_{\mathbf{n}^*,J^{(s)}_{2 k}}-a\right)^{+}-\left(S_{\mathbf{n}^*,J^{(s)}_{2 k-1}}-a\right)^{+}\right]+\left[\left(S_{\mathbf{n}}-a\right)^{+}-\left(S_{\mathbf{n}^*,\tilde{J}^{(s)}}-a\right)^{+}\right] I_{\Lambda} \\
	&=\sum_{k=1}^{U_{n_s}(a,b)}\left(S_{\mathbf{n}^*,J^{(s)}_{2 k}}-a\right)^{+}+\left(S_{\mathbf{n}}-a\right)^{+} I_{\Lambda} \\
	\geq &(b-a) U_{n_s}(a,b).
	\end{aligned}
	$$
	By taking expectations, we have that
	$$
	E\left[\left(S_{\mathbf{n}}-a\right)^{+}-\left(S_{\mathbf{n}^*,1}-a\right)\right] \geq(b-a) EU_{n_s}(a,b)+\sum_{j=1}^{n_k-1} E\left[\epsilon_{\mathbf{n}^*,j}\left(S_{\mathbf{n}^*,j+1}-S_{\mathbf{n}^*,j}\right)\right].
	$$
	The desired result follows by noting that $\epsilon_{\mathbf{n}^*,j}$ is a nonnegative nondecreasing function of $\{S_{\mathbf{n}^*,i},\, i=1,\ldots,j\}$ which forms a multiindexed demisubmartingale and thus the latter term is always nonnegative. 
\end{proof}

\medskip

\noindent The upcrossing inequality leads to the following convergence result.

\begin{theorem}
	Let $\left\{S_{\mathbf{n}}, \mathbf{n} \in \mathbb{N}^{k}\right\}$ be a multiindexed demimartingale with $S_{\boldsymbol{\ell}}=0$ such that
	\begin{equation}
	\label{ass1}\limsup _{\mathbf{n} \rightarrow \infty} E\left|S_{\mathbf{n}}\right|<\infty,
	\end{equation}
	then the $S_{\mathbf{n}}$ converge a.s. and $E|\displaystyle\lim _{\mathbf{n} \rightarrow \infty}S_{\mathbf{n}}|<\infty$.
\end{theorem}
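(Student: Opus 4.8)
The plan is to transcribe the classical single-indexed demi(sub)martingale convergence argument, with the upcrossing inequality just established playing the role of the engine; the genuinely multidimensional work lies in bookkeeping the directional upcrossing counts over a countable index set and in passing from slicewise convergence to convergence of the net.

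I would first observe that hypothesis \eqref{ass1} is in fact equivalent to $\sup_{\mathbf{n}}E|S_{\mathbf{n}}|<\infty$. Freezing all coordinates of $\mathbf{n}$ but one exhibits $\{S_{\mathbf{n}}\}$ as a single-indexed demimartingale in the remaining coordinate, and since $x\mapsto |x|$ is convex with $|0|=0$, the sequence $\{|S_{\mathbf{n}}|\}$ is then a single-indexed demisubmartingale (Lemma 2.1 in \cite{C2000}), so $E|S_{\mathbf{n}}|$ is nondecreasing in that coordinate; iterating over the $k$ coordinates shows $E|S_{\mathbf{n}}|$ is coordinatewise nondecreasing, whence $M:=\sup_{\mathbf{n}}E|S_{\mathbf{n}}|=\lim_{\mathbf{n}\to\infty}E|S_{\mathbf{n}}|=\limsup_{\mathbf{n}\to\infty}E|S_{\mathbf{n}}|<\infty$.

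Next, fix a direction $s\in\{1,\dots,k\}$, a value for the remaining $k-1$ coordinates (ranging over the countable set $\mathbb{N}^{k-1}$), and rationals $a<b$. The corresponding slice of $\{S_{\mathbf{n}}\}$ is a single-indexed demisubmartingale, so the upcrossing theorem bounds the expected number of upcrossings of $[a,b]$ by its first $m$ terms by a quantity at most $(M+|a|)/(b-a)$, uniformly in $m$. Letting $m\to\infty$ and applying monotone convergence, the total number of upcrossings of $[a,b]$ by that slice has expectation at most $(M+|a|)/(b-a)$ and is therefore a.s. finite. Intersecting over the countably many triples (direction, remaining coordinates, rational $a<b$) yields a single almost sure event on which every axis-directional slice of $\{S_{\mathbf{n}}\}$ converges in $[-\infty,\infty]$; the limits are finite a.s., since $\sup_j E|S_{\mathbf{n}^*,j}|\le M$ and Fatou applies along each slice.

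The step I expect to be the main obstacle is upgrading this slicewise convergence to convergence of $S_{\mathbf{n}}$ as $\mathbf{n}\to\infty$ (that is, $\min_j n_j\to\infty$), since coordinatewise convergence of a multi-indexed array does not in general force the array itself to converge. I would proceed by induction on $k$: the terminal field $S_{\mathbf{n}^*,\infty}:=\lim_{j\to\infty}S_{(\mathbf{n}^*,j)}$ inherits the $(k-1)$-indexed demi(sub)martingale property, and by Fatou $\sup_{\mathbf{n}^*}E|S_{\mathbf{n}^*,\infty}|\le M$, so the induction hypothesis supplies a limit $S_\infty$ with $E|S_\infty|\le M<\infty$. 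It then remains to exclude oscillation of $S_{\mathbf{n}}$ transverse to the coordinate axes; here the key observation is that along any coordinatewise increasing chain $\mathbf{n}^{(1)}<\mathbf{n}^{(2)}<\cdots$ the process $\{S_{\mathbf{n}^{(m)}}\}_m$ is again a single-indexed demimartingale, so the upcrossing bound applies to such chains as well and, after a discretisation of the relevant chains, forces $\liminf_{\mathbf{n}\to\infty}S_{\mathbf{n}}=\limsup_{\mathbf{n}\to\infty}S_{\mathbf{n}}$ almost surely. Once almost sure convergence is secured, $E|\lim_{\mathbf{n}\to\infty}S_{\mathbf{n}}|\le\liminf E|S_{\mathbf{n}}|\le M<\infty$ by Fatou along any sequence $\mathbf{n}^{(m)}$ with $\min_j n^{(m)}_j\to\infty$, which completes the proof.
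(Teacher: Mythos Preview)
Your proposal and the paper's proof share the same engine---the directional upcrossing inequality---but diverge in how they pass from that estimate to almost sure convergence of the net $\{S_{\mathbf n}\}$.

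The paper does not work with slices and terminal fields. It introduces the quantity $U(a,b)=\min\{U^{(s)}(a,b)>0:1\le s\le k\}$ (set to $0$ if all directional counts vanish), bounds each directional count $U^{(s)}(a,b)$ by $(b-a)^{-1}\bigl(\sup_{\mathbf n}E|S_{\mathbf n}|+a\bigr)$ via the upcrossing theorem, observes $U(a,b)\le U^{(s)}(a,b)<\infty$ a.s., and then simply asserts that finiteness of $U(a,b)$ for all rational $a<b$ yields convergence, followed by Fatou. In other words, the paper absorbs into a definition precisely the step you flag as ``the main obstacle'': it never argues why its minimum-of-directions upcrossing count controls oscillation of the full net, nor does it address the point you make explicitly, that coordinatewise convergence does not by itself force convergence as $\min_j n_j\to\infty$.

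Your route---slicewise convergence, then induction on $k$ via the terminal field $S_{\mathbf n^*,\infty}$, then the observation that any coordinatewise increasing chain is again a single-indexed demimartingale---is genuinely different and considerably more explicit about this gap. Two of your intermediate claims would still need care: showing that $S_{\mathbf n^*,\infty}$ inherits the demimartingale property requires passing a limit through an expectation (you have only $L^1$-boundedness, not uniform integrability, so some truncation or monotone-class argument is needed), and the ``discretisation of the relevant chains'' that is meant to rule out transverse oscillation is left vague. Your preliminary remark that $\limsup_{\mathbf n}E|S_{\mathbf n}|=\sup_{\mathbf n}E|S_{\mathbf n}|$ by coordinatewise monotonicity of $E|S_{\mathbf n}|$ is a useful clarification that the paper omits.
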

\begin{proof}
It is known that a sequence of real numbers converges to $\mathbb{R}\cup{\pm \infty}$ if and only if the number of upcrossings of $[a,b]$ is finite for all rational numbers $a<b$. By the upcrossing inequality proven above, we have that for every $1\leq s\leq k$
		\[
		EU_{n_s}(a,b)\leq \frac{1}{b-a}E\left(S_{\mathbf{n}}-a\right)^{+} \leq \frac{1}{b-a}(E|S_\mathbf{n}|+a)
		\]
		and  therefore
		\[
		EU^{(s)}_{a, b}\leq \frac{1}{b-a}(\sup_{\mathbf{n}}E|S_\mathbf{n}|+a)
		\]
		and thus $U^{(s)}_{a, b}$ is finite almost surely for every $1\leq s\leq k$ due to condition \eqref{ass1}. By the definition of $U(a,b)$ we have that $U(a,b)\leq U^{(s)}_{a, b}$ for any $s$ and thus $U(a,b)$ is also finite almost surely. Define $\Omega_0 = \displaystyle\bigcap_{a<b\in \mathbb{Q}}\left(U(a, b)<\infty\right)$. Then $P(\Omega_0) = 1$. This observation establishes the convergence of $S_{\mathbf{n}}$ while the last argument for the expectation follows by applying Fatou's lemma and utilizing assumption \eqref{ass1}.
\end{proof}

\section{Whittle-type inequality}
 The next result is the multiindexed analogue of the Whittle type maximal inequality for demimartingales proven by Prakasa Rao in \cite{PR2002} (see also Theorem 2.6.1 in \cite{PR2012}).

\begin{theorem}
	\label{WhittleNond}Let $\left\{S_{\mathbf{n}}, \mathbf{n} \in \mathbb{N}^{k}\right\}$ be a multiindexed demimartingale with $S_{\boldsymbol{\ell}}=0$ when $\prod_{i=1}^{k} \ell_{i}=0$. Let $\phi(\cdot)$ be a nonnegative nondecreasing convex function such that $\phi(0)= 0$. Let $\psi(u)$ be a positive nondecreasing function for $u>0$. Then,
	\begin{equation}
	\label{Whittle} P(\phi(S_{\mathbf{j}})\leq \psi(u_{\mathbf{j}}), \, \mathbf{j}\leq \mathbf{n}) \geq 1-\min_{1\leq s\leq k}\sum_{i =1}^{n_s} \frac{E\left(  \phi\left(S_{\mathbf{n};s;i}\right)- \phi\left(S_{\mathbf{n};s;i-1}\right)\right)}{\psi \left( u_{\mathbf{n};s;i}     \right)}
	\end{equation}
	for $0<u_{\mathbf{i}}\leq u_{\mathbf{j}},\, \mathbf{i}\leq \mathbf{j}$. 
\end{theorem}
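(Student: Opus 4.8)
The plan is to obtain \eqref{Whittle} as an essentially immediate consequence of the Chow-type maximal inequality for multiindexed demimartingales established in Section~4, via the change of variables $c_{\mathbf{j}}=\psi(u_{\mathbf{j}})^{-1}$. Indeed, the right-hand side of \eqref{Whittle} is exactly a Chow-type bound with weights $\psi(u_{\mathbf{j}})^{-1}$ and convex function $\phi$, so no new estimate is needed; all of the multiindexed bookkeeping has already been carried out in the Chow-type theorem.

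In detail, I would first set $c_{\mathbf{j}}=\psi(u_{\mathbf{j}})^{-1}$ for $\mathbf{j}\in\mathbb{N}^k$. Since $\psi$ is positive these are positive numbers, and since $\psi$ is nondecreasing while $\mathbf{i}\le\mathbf{j}$ forces $u_{\mathbf{i}}\le u_{\mathbf{j}}$, we get $\psi(u_{\mathbf{i}})\le\psi(u_{\mathbf{j}})$ and hence $c_{\mathbf{i}}\ge c_{\mathbf{j}}$; thus $\{c_{\mathbf{j}},\,\mathbf{j}\in\mathbb{N}^k\}$ is a nonincreasing array of positive numbers. Also $\phi$ is a nonnegative convex function on $\mathbb{R}$ with $\phi(0)=0$, and $S_{\boldsymbol{\ell}}=0$ whenever $\prod_{i=1}^{k}\ell_i=0$, so the hypotheses of the Chow-type inequality are met. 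Next, since $\phi\ge0$ and each $\psi(u_{\mathbf{j}})>0$, the complement of the event in \eqref{Whittle} equals $\{\max_{\mathbf{j}\le\mathbf{n}}c_{\mathbf{j}}\phi(S_{\mathbf{j}})>1\}$, and
$$\Big\{\max_{\mathbf{j}\le\mathbf{n}}c_{\mathbf{j}}\phi(S_{\mathbf{j}})>1\Big\} \subseteq \Big\{\max_{\mathbf{j}\le\mathbf{n}}c_{\mathbf{j}}\phi(S_{\mathbf{j}})\ge1\Big\}.$$
Applying the Chow-type inequality with $g=\phi$, weights $c_{\mathbf{j}}$ as above, and $\varepsilon=1$ bounds the probability of the last event by $\min_{1\le s\le k}\sum_{i=1}^{n_s}c_{\mathbf{n};s;i}E[\phi(S_{\mathbf{n};s;i})-\phi(S_{\mathbf{n};s;i-1})]$, which is exactly $\min_{1\le s\le k}\sum_{i=1}^{n_s}\psi(u_{\mathbf{n};s;i})^{-1}E(\phi(S_{\mathbf{n};s;i})-\phi(S_{\mathbf{n};s;i-1}))$. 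Passing to the complement gives \eqref{Whittle}.

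I do not expect a real obstacle: the substance sits entirely in the already-proved Chow-type inequality, and the only steps requiring a line of argument are the monotonicity computation showing that $\{c_{\mathbf{j}}\}$ is nonincreasing and the reformulation of the exceedance event. One remark worth including is that, if one prefers to start from the sharper Theorem~\ref{Chownew}, the extra indicator factor $I(\max_{\mathbf{k}\le\mathbf{n}}c_{\mathbf{k}}\phi(S_{\mathbf{k}})\ge1)$ it produces may simply be retained, yielding a localized and hence stronger form of \eqref{Whittle}; to recover the statement precisely as written it is cleanest to use the non-localized Chow-type bound, because the increments $\phi(S_{\mathbf{n};s;i})-\phi(S_{\mathbf{n};s;i-1})$ are not pointwise nonnegative and so the indicator cannot be dropped for free.
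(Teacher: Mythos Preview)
Your proposal is correct and takes a genuinely different route from the paper. The paper proves Theorem~\ref{WhittleNond} from scratch via a direct telescoping argument: it writes $P(A_{\mathbf{n}})=E\prod_{\ell} I(A_{\ell n_2})$, peels off one factor at a time using the pointwise bound $I(A)\ge 1-\phi/\psi$, and checks at each step that the resulting cross term has the right sign via the single-index demisubmartingale property of $\{\phi(S_{i n_2})\}_{i}$. Your approach instead recognizes the Whittle bound as a reparametrization of the Chow-type inequality (with $c_{\mathbf{j}}=\psi(u_{\mathbf{j}})^{-1}$), which is more economical and avoids repeating that induction. What the paper's direct argument buys is self-containment and a template that is reused verbatim in Theorem~\ref{GenWhittle}; what yours buys is brevity and a transparent explanation of why the Whittle and Chow right-hand sides coincide.

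One small point to tighten: your remark that the indicator ``cannot be dropped for free'' is too pessimistic, and the paper contains no separate single-sum ``non-localized'' Chow bound to cite (the older multi-sum version does not match the right-hand side of \eqref{Whittle}). From Theorem~\ref{Chownew} the indicator is removed by an Abel summation: since $\phi\ge0$, $\phi(S_{\mathbf{n};s;0})=0$, and the weights $c_{\mathbf{n};s;i}$ are nonincreasing in $i$,
\[
\sum_{i=1}^{n_s} c_{\mathbf{n};s;i}\,E\bigl[(\phi(S_{\mathbf{n};s;i})-\phi(S_{\mathbf{n};s;i-1}))\,I(A^{c})\bigr]
= c_{\mathbf{n}}\,E[\phi(S_{\mathbf{n}})I(A^{c})] + \sum_{i=1}^{n_s-1}(c_{\mathbf{n};s;i}-c_{\mathbf{n};s;i+1})\,E[\phi(S_{\mathbf{n};s;i})I(A^{c})]\ge 0,
\]
which is exactly the device the paper itself uses (without comment) in the proof of the H\'ajek--R\'enyi corollary following Theorem~\ref{Chownew}. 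With this one line added your argument is complete.
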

\begin{proof}
	The proof will be given for $k=2$ for simplicity. Define
	\[
	A_{n_1n_2} = \{\phi(S_{ij})\leq \psi(u_{ij}), \, (i,j)\leq (n_1,n_2)\}.
	\]
	Then,
	\begin{align*}
	&P(A_{n_1n_2}) = E\left( \prod_{\ell =1}^{n_1} I(A_{\ell n_2})\right)  = E\left( \prod_{\ell =1}^{n_1-1} I(A_{\ell n_2})I(A_{n_1n_2})\right)\geq E\left( \prod_{\ell =1}^{n_1-1} I(A_{\ell n_2})\left( 1-\frac{\phi(S_{n_1n_2})}{\psi(u_{n_1n_2})} \right)               \right)
	\end{align*}
	where the inequality follows by observing that 
	\[
	I(A_{n_1n_2}) \geq 1-\frac{\phi(S_{n_1n_2})}{\psi(u_{n_1n_2})}.
	\]
	Notice that 
	\begin{align*}
	&E\left(    \prod_{\ell =1}^{n_1-1} I(A_{\ell n_2}) \left[ \left(  1- \frac{\phi(S_{n_1n_2})}{\psi(u_{n_1n_2})}\right)         -   \left(  1- \frac{\phi(S_{n_1-1n_2})}{\psi(u_{n_1n_2})}\right)                   \right]   + \frac{\phi(S_{n_1n_2}) - \phi(S_{n_1-1n_2})}{\psi(u_{n_1n_2})}    \right)\\
	&= E\left(  \left(   1- \prod_{\ell =1}^{n_1-1} I(A_{\ell n_2})      \right) \frac{\phi(S_{n_1n_2}) - \phi(S_{n_1-1n_2})}{\psi(u_{n_1n_2})}      \right)\geq 0
	\end{align*}
	since $\left(   1- \prod_{\ell =1}^{n_1-1} I(A_{\ell n_2})      \right)$ is nonnegative componentwise nondecreasing function of $\{ \phi(S_{in_2}), \, 1\leq i\leq n_1\}$ which forms a single-index demimartingale. Hence,
	\begin{align*}
	&P(A_{n_1n_2}) \geq E\left(    \prod_{\ell =1}^{n_1-1} I(A_{\ell n_2}) \left(     1- \frac{\phi(S_{n_1-1n_2})}{\psi(u_{n_1n_2})}        \right)           \right) - \frac{E\phi(S_{n_1n_2}) - E\phi(S_{n_1-1n_2})}{\psi(u_{n_1n_2})}\\
	&\geq E\left(    \prod_{\ell =1}^{n_1-2} I(A_{\ell n_2}) \left(     1- \frac{\phi(S_{n_1-1n_2})}{\psi(u_{n_1-1n_2})}        \right)           \right) - \frac{E\phi(S_{n_1n_2}) - E\phi(S_{n_1-1n_2})}{\psi(u_{n_1n_2})}
	\end{align*}
	where the last inequality follows due to the fact that $u_{\mathbf{i}}\leq u_{\mathbf{j}}$ for $\mathbf{i}\leq \mathbf{j}$ and $\psi(u_\mathbf{n})$ is a nondecreasing sequence of positive numbers. Continuing with the same manner we have that 
	\[
	P(A_{n_1n_2}) \geq 1 - \sum_{\ell = 1}^{n_1} \frac{E\phi(S_{\ell n_2}) - E\phi(S_{\ell-1n_2})}{\psi(u_{\ell n_2})}.
	\]
	Similarly we can obtain
	\[
	P(A_{n_1n_2}) \geq 1 - \sum_{\ell = 1}^{n_2} \frac{E\phi(S_{n_1\ell}) - E\phi(S_{n_1\ell-1})}{\psi(u_{n_1\ell })}
	\]
	and these last two inequalities lead to the 2-index analogue of \eqref{Whittle}.
\end{proof}

\medskip

\noindent The next result provides a generalization of the Whittle type inequality where the assumption of nondecreasing property is dropped. 

\begin{theorem}
	\label{GenWhittle}Let $\left\{S_{\mathbf{n}}, \mathbf{n} \in \mathbb{N}^{k}\right\}$ be a multiindexed demimartingale with $S_{\boldsymbol{\ell}}=0$ when $\prod_{i=1}^{k} \ell_{i}=0$. Let $\phi(\cdot)$ be a nonnegative convex function such that $\phi(0)= 0$. Let $\psi(u)$ be a positive nondecreasing function for $u>0$ and 
	\[
	A_{\mathbf{n}} = \{\phi(S_{\mathbf{j}})\leq \psi(u_{\mathbf{j}}), \, \mathbf{j}\leq \mathbf{n}\}
	\]
	where $0<u_{\mathbf{i}}\leq u_{\mathbf{j}}$ for $\mathbf{i}\leq \mathbf{j}$. Then,
	\begin{equation}
	\label{Whittle2} P(A_{\mathbf{n}}) \geq 1-\min_{1\leq s\leq k}\sum_{i =1}^{n_s} \frac{E\left(  \phi\left(S_{\mathbf{n};s;i}\right)- \phi\left(S_{\mathbf{n};s;i-1}\right)\right)}{\psi \left( u_{\mathbf{n};s;i}     \right)}.
	\end{equation}
\end{theorem}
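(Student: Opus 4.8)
The strategy is to deduce Theorem \ref{GenWhittle} from the already established Theorem \ref{WhittleNond} by means of the same decomposition that was used in the proof of Theorem \ref{Chownew} to pass from nondecreasing convex functions to arbitrary convex ones. One cannot simply repeat the peeling argument behind Theorem \ref{WhittleNond}: at its key step that proof uses that the one-dimensional slice $\{\phi(S_{\mathbf{n};s;i})\}_i$ is a single-index demisubmartingale, which is valid when $\phi$ is nondecreasing convex but fails for a general convex $\phi$ --- this is exactly the obstruction already met in Theorem \ref{Chownew}. So I would not attempt a direct adaptation but instead split $\phi$.

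Accordingly, put $u(x) = \phi(x) I\{x \geq 0\}$ and $v(x) = \phi(x) I\{x < 0\}$. As noted in the proof of Theorem \ref{Chownew}, $u$ is a nonnegative nondecreasing convex function, $v$ is a nonnegative nonincreasing convex function, $u(0) = v(0) = 0$, and $\phi(x) = u(x) + v(x) = \max\{u(x), v(x)\}$. From the last representation, $\phi(S_{\mathbf{j}}) \leq \psi(u_{\mathbf{j}})$ holds precisely when both $u(S_{\mathbf{j}}) \leq \psi(u_{\mathbf{j}})$ and $v(S_{\mathbf{j}}) \leq \psi(u_{\mathbf{j}})$; hence $A_{\mathbf{n}} = B_{\mathbf{n}} \cap C_{\mathbf{n}}$, where $B_{\mathbf{n}} = \{u(S_{\mathbf{j}}) \leq \psi(u_{\mathbf{j}}),\ \mathbf{j} \leq \mathbf{n}\}$ and $C_{\mathbf{n}} = \{v(S_{\mathbf{j}}) \leq \psi(u_{\mathbf{j}}),\ \mathbf{j} \leq \mathbf{n}\}$, so that $P(A_{\mathbf{n}}) \geq P(B_{\mathbf{n}}) + P(C_{\mathbf{n}}) - 1$. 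For $P(B_{\mathbf{n}})$ I would apply Theorem \ref{WhittleNond} to the demimartingale $\{S_{\mathbf{n}}\}$ and the nondecreasing convex function $u$. For $P(C_{\mathbf{n}})$, since $v$ is only nonincreasing, I would first check that $\{-S_{\mathbf{n}}\}$ is again a multiindexed demimartingale with $-S_{\boldsymbol{\ell}} = 0$ when $\prod_{i=1}^{k}\ell_i = 0$ (for componentwise nondecreasing $f$, the defining inequality for $-S$ with multiplier $f(-S_{\mathbf{k}},\, \mathbf{k} \leq \mathbf{i})$ is the defining inequality for $S$ with the componentwise nondecreasing multiplier $-f(-S_{\mathbf{k}},\, \mathbf{k} \leq \mathbf{i})$); then, writing $w(t) = v(-t) = \phi(-t) I\{t > 0\}$, which is nonnegative nondecreasing convex with $w(0) = 0$ and satisfies $w(-S_{\mathbf{j}}) = v(S_{\mathbf{j}})$, I would apply Theorem \ref{WhittleNond} to $\{-S_{\mathbf{n}}\}$ and $w$.

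Combining, and writing $\alpha_s = \sum_{i=1}^{n_s} \frac{E(u(S_{\mathbf{n};s;i}) - u(S_{\mathbf{n};s;i-1}))}{\psi(u_{\mathbf{n};s;i})}$ and $\beta_s$ for the analogous sum with $v$ in place of $u$, the two applications of Theorem \ref{WhittleNond} give $P(A_{\mathbf{n}}) \geq 1 - \min_{1 \leq s \leq k} \alpha_s - \min_{1 \leq s \leq k} \beta_s$. Since $\min_s \alpha_s + \min_s \beta_s \leq \alpha_s + \beta_s$ for every $s$, and $\alpha_s + \beta_s = \sum_{i=1}^{n_s} \frac{E(\phi(S_{\mathbf{n};s;i}) - \phi(S_{\mathbf{n};s;i-1}))}{\psi(u_{\mathbf{n};s;i})}$ because $\phi = u + v$, minimizing over $s$ on the right-hand side yields \eqref{Whittle2} (in fact the slightly stronger bound in which the minimum is taken separately over the $u$- and $v$-parts). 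The only genuinely non-routine point is the observation in the first paragraph --- that the proof of Theorem \ref{WhittleNond} does not transfer verbatim, so the splitting $\phi = u + v = \max\{u,v\}$ together with the reflection $S \mapsto -S$ is needed; the remaining verifications (the convexity and monotonicity of $u$, $v$, $w$, the demimartingale property of $\{-S_{\mathbf{n}}\}$, the Bonferroni bound, and the elementary inequality $\min_s \alpha_s + \min_s \beta_s \leq \min_s(\alpha_s + \beta_s)$) are bookkeeping.
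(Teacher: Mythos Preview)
Your proof is correct and follows the same overall architecture as the paper's: the decomposition $\phi = u + v = \max\{u,v\}$ into nondecreasing and nonincreasing convex pieces, the Bonferroni bound $P(A_{\mathbf{n}}) \geq P(B_{\mathbf{n}}) + P(C_{\mathbf{n}}) - 1$, the direct appeal to Theorem \ref{WhittleNond} for the $u$-part, and the final recombination via $\phi = u+v$ are identical to the paper's proof.

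The one substantive difference is in the treatment of the nonincreasing piece $v$. The paper does \emph{not} use the reflection $S \mapsto -S$; instead it reruns the peeling argument of Theorem \ref{WhittleNond} directly for $v$, replacing the step ``$\{\phi(S_{\mathbf{n};s;i})\}_i$ is a demisubmartingale'' by the convexity inequality $v(S_{\mathbf{n}^*,n_1}) - v(S_{\mathbf{n}^*,n_1-1}) \geq (S_{\mathbf{n}^*,n_1} - S_{\mathbf{n}^*,n_1-1}) f(S_{\mathbf{n}^*,n_1-1})$ with $f$ the (nonpositive, nondecreasing) left derivative of $v$, and then observing that the product $\bigl(1 - \prod_\ell I(B_{\ell n_2})\bigr) f(S_{\mathbf{n}^*,n_1-1})$ is componentwise nondecreasing in the $S$'s, so the demimartingale property applies. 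Your reflection device is cleaner and more modular: once you check that $\{-S_{\mathbf{n}}\}$ is again a multiindexed demimartingale and that $w(t)=v(-t)$ is nonnegative nondecreasing convex with $w(0)=0$, Theorem \ref{WhittleNond} applies as a black box and no peeling needs to be repeated. The paper's route, on the other hand, is self-contained in the sense that it exposes exactly which monotonicity is being used and does not require the auxiliary fact about $\{-S_{\mathbf{n}}\}$. Both reach the same bounds \eqref{vRel1}--\eqref{vRel2}, and your observation that you in fact obtain the slightly sharper estimate $P(A_{\mathbf{n}}) \geq 1 - \min_s \alpha_s - \min_s \beta_s$ is correct (the paper combines directionwise before taking the minimum).
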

\begin{proof}
	The proof will be given for $k=2$. Similar to the proof of Theorem \ref{Chownew}, we start by defining the functions 
	\[
	u(x) = \phi(x)I\{x\geq 0\}\mbox{ and }v(x)=\phi(x)I\{x<0\}.
	\]
	Recall that both $u(x)$ and $v(x)$ are nonnegative convex functions, however they have different monotonicity since $u(x)$ is nondecreasing while $v(x)$ is  nonincreasing. Moreover,
	\[
	\phi(x) = u(x)+v(x) = \max\{u(x),v(x)\}.
	\]
	Then, we have that
	\begin{eqnarray*}
		P\left(\max_{(i,j)\leq (n_1,n_2)}\frac{\phi(S_{ij})}{\psi(u_{ij})}> 1\right)&=&P\left(\max_{(i,j)\leq (n_1,n_2)}\frac{\max\{u(S_{ij}),v(S_{ij})\}}{\psi(u_{ij})}> 1\right)\\
		&\leq&P\left(\max_{(i,j)\leq (n_1,n_2)}\frac{u(S_{ij})}{\psi(u_{ij})}> 1\right)+P\left(\max_{(i,j)\leq (n_1,n_2)}\frac{v(S_{ij})}{\psi(u_{ij})}> 1\right).
	\end{eqnarray*}
	The above inequality can be written as
	\[
	P\left(\max_{(i,j)\leq (n_1,n_2)}\frac{\phi(S_{ij})}{\psi(u_{ij})}\leq 1\right)\geq P\left(\max_{(i,j)\leq (n_1,n_2)}\frac{u(S_{ij})}{\psi(u_{ij})}\leq 1\right)+P\left(\max_{(i,j)\leq (n_1,n_2)}\frac{v(S_{ij})}{\psi(u_{ij})}\leq 1\right)-1
	\]
	or equivalently,
	\begin{equation}
	\label{guv}P\left(\frac{\phi(S_{ij})}{\psi(u_{ij})}\leq 1,\, (i,j)\leq (n_1,n_2)\right)\geq P\left(\frac{u(S_{ij})}{\psi(u_{ij})}\leq 1,\, (i,j)\leq (n_1,n_2) \right)+P\left(\frac{v(S_{ij})}{\psi(u_{ij})}\leq 1,\, (i,j)\leq (n_1,n_2)\right)-1.
	\end{equation}
	Since $u$ is a nondecreasing convex function by Theorem \ref{WhittleNond} we have that
	\begin{equation}
	\label{uRel1}P\left(\frac{u(S_{ij})}{\psi(u_{ij})}\leq 1,\, (i,j)\leq (n_1,n_2) \right)\geq 1 - \sum_{\ell = 1}^{n_1} \frac{E(u(S_{\ell n_2}) - u(S_{\ell-1n_2}))}{\psi(u_{\ell n_2})}.
	\end{equation}
	Similarly we can obtain
	\begin{equation}
	\label{uRel2}
	P\left(\frac{u(S_{ij})}{\psi(u_{ij})}\leq 1,\, (i,j)\leq (n_1,n_2) \right) \geq 1 - \sum_{\ell = 1}^{n_2} \frac{E(u(S_{n_1\ell}) - u(S_{n_1\ell-1}))}{\psi(u_{n_1\ell })}
	\end{equation}	
	Let $B_{n_1n_2} = \left\{\frac {v(S_{ij})}{\psi(u_{ij})}\leq 1,\, (i,j)\leq (n_1,n_2)\right\}$. Following similar arguments as in the proof of Theorem \ref{WhittleNond} we have that 
	\begin{align*}
	&E\left(    \prod_{\ell =1}^{n_1-1} I(B_{\ell n_2}) \left[ \left(  1- \frac{v(S_{n_1n_2})}{\psi(u_{n_1n_2})}\right)         -   \left(  1- \frac{v(S_{n_1-1n_2})}{\psi(u_{n_1n_2})}\right)                   \right]   + \frac{v(S_{n_1n_2}) - v(S_{n_1-1n_2})}{\psi(u_{n_1n_2})}    \right)\\
	&= E\left(  \left(   1- \prod_{\ell =1}^{n_1-1} I(B_{\ell n_2})      \right) \frac{v(S_{n_1n_2}) - v(S_{n_1-1n_2})}{\psi(u_{n_1n_2})}      \right)\geq E\left(  \left(   1- \prod_{\ell =1}^{n_1-1} I(B_{\ell n_2})      \right) \frac{(S_{n_1n_2} - S_{n_1-1n_2})}{\psi(u_{n_1n_2})} f(S_{n_1-1n_2})     \right)\\
	&\geq 0
	\end{align*}
	where the first inequality is due to the convexity of the function $v$ with $f(.)$ being the left derivative of $v$ which is a nonpositive nondecreasing function. The last inequality is obtained by the multiindexed demimartingale property since 
	\[
	\left(   1- \prod_{\ell =1}^{n_1-1} I(B_{\ell n_2}) \right)f(S_{n_1-1n_2}) 
	\]
	is a nondecreasing function of $\{S_{in_2}, \,i=1,\ldots,n_1-1\}$. Working in a similar manner as in Theorem \ref{WhittleNond} we can obtain the following bounds for the $P(B_{n_1n_2})$,
	\begin{equation}
	\label{vRel1}P\left(\frac{v(S_{ij})}{\psi(u_{ij})}\leq 1,\, (i,j)\leq (n_1,n_2) \right)\geq 1 - \sum_{\ell = 1}^{n_1} \frac{E(v(S_{\ell n_2}) - v(S_{\ell-1n_2}))}{\psi(u_{\ell n_2})}
	\end{equation}
	and
	\begin{equation}
	\label{vRel2}
	P\left(\frac{v(S_{ij})}{\psi(u_{ij})}\leq 1,\, (i,j)\leq (n_1,n_2) \right) \geq 1 - \sum_{\ell = 1}^{n_2} \frac{E(v(S_{n_1\ell}) - v(S_{n_1\ell-1}))}{\psi(u_{n_1\ell })}
	\end{equation}	
	By combining \eqref{guv}, \eqref{uRel1} and \eqref{vRel1} we have
	\[
	P\left(\frac{\phi(S_{ij})}{\psi(u_{ij})}\leq 1,\, (i,j)\leq (n_1,n_2)\right)\geq 1 - \sum_{\ell = 1}^{n_1} \frac{E(\phi(S_{\ell n_2}) - \phi(S_{\ell-1n_2}))}{\psi(u_{\ell n_2})}
	\]
	while by combining \eqref{guv}, \eqref{uRel2} and \eqref{vRel2} we get
	\[
	P\left(\frac{\phi(S_{ij})}{\psi(u_{ij})}\leq 1,\, (i,j)\leq (n_1,n_2)\right)\geq 1 - \sum_{\ell = 1}^{n_2} \frac{E(\phi(S_{n_1\ell}) - \phi(S_{n_1\ell-1}))}{\psi(u_{n_1\ell })}.
	\]
	The desired result follows by combining the last two inequalities.	
\end{proof}

\medskip

\noindent As a direct consequence of the above Whittle-type inequality we have the corollary that follows.

\begin{corollary}
	Let $\left\{S_{\mathbf{n}}, \mathbf{n} \in \mathbb{N}^{k}\right\}$ be a multiindexed demimartingale with $S_{\boldsymbol{\ell}}=0$ when $\prod_{i=1}^{k} \ell_{i}=0$. Let $\phi(\cdot)$ be a nonnegative convex function such that $\phi(0)= 0$. Let $\psi(u)$ be a positive nondecreasing function for $u>0$.  Then, for $\epsilon>0$,
	\[
	\epsilon P\left(  \sup_{\mathbf{j}\leq \mathbf{n}}\frac{\phi(S_\mathbf{j})}{\psi(u_{\mathbf{j}})}\geq \epsilon          \right) \leq \min_{1\leq s\leq k}\sum_{i =1}^{n_s} \frac{E\left(  \phi\left(S_{\mathbf{n};s;i}\right)- \phi\left(S_{\mathbf{n};s;i-1}\right)\right)}{\psi \left( u_{\mathbf{n};s;i}     \right)}.
	\]
\end{corollary}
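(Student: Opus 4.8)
The plan is to derive this bound directly from Theorem \ref{GenWhittle} by rescaling the threshold function. Fix $\epsilon>0$ and set $\psi_\epsilon(u)=\epsilon\,\psi(u)$; since $\psi$ is positive and nondecreasing on $(0,\infty)$, so is $\psi_\epsilon$, and hence $\psi_\epsilon$ meets every hypothesis that Theorem \ref{GenWhittle} imposes on the threshold function. Applying that theorem with $\psi_\epsilon$ in place of $\psi$ to the event $A_{\mathbf{n}}^{\epsilon}=\{\phi(S_{\mathbf{j}})\leq\epsilon\,\psi(u_{\mathbf{j}}),\ \mathbf{j}\leq\mathbf{n}\}$ gives
\[
P\left(A_{\mathbf{n}}^{\epsilon}\right)\geq 1-\frac{1}{\epsilon}\min_{1\leq s\leq k}\sum_{i=1}^{n_s}\frac{E\left(\phi\left(S_{\mathbf{n};s;i}\right)-\phi\left(S_{\mathbf{n};s;i-1}\right)\right)}{\psi\left(u_{\mathbf{n};s;i}\right)},
\]
where the factor $1/\epsilon$ can be pulled out of the minimum precisely because $\epsilon>0$.

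Next I would pass to the complementary event. Since $\{\mathbf{j}:\mathbf{j}\leq\mathbf{n}\}$ is a finite set and each $\psi(u_{\mathbf{j}})>0$, the complement of $A_{\mathbf{n}}^{\epsilon}$ is exactly $\{\sup_{\mathbf{j}\leq\mathbf{n}}\phi(S_{\mathbf{j}})/\psi(u_{\mathbf{j}})>\epsilon\}$, so rearranging the displayed inequality and multiplying through by $\epsilon$ produces
\[
\epsilon\,P\left(\sup_{\mathbf{j}\leq\mathbf{n}}\frac{\phi(S_{\mathbf{j}})}{\psi(u_{\mathbf{j}})}>\epsilon\right)\leq\min_{1\leq s\leq k}\sum_{i=1}^{n_s}\frac{E\left(\phi\left(S_{\mathbf{n};s;i}\right)-\phi\left(S_{\mathbf{n};s;i-1}\right)\right)}{\psi\left(u_{\mathbf{n};s;i}\right)}.
\]
To upgrade the strict inequality to ``$\geq\epsilon$'' I would rerun the argument with $\epsilon$ replaced by $\epsilon(1-\delta)$ for arbitrary $\delta\in(0,1)$, use the inclusion $\{\sup_{\mathbf{j}\leq\mathbf{n}}\phi(S_{\mathbf{j}})/\psi(u_{\mathbf{j}})\geq\epsilon\}\subseteq\{\sup_{\mathbf{j}\leq\mathbf{n}}\phi(S_{\mathbf{j}})/\psi(u_{\mathbf{j}})>\epsilon(1-\delta)\}$ together with the fact that the right-hand side of the bound is unchanged, and then let $\delta\downarrow 0$.

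I do not expect a genuine obstacle here: the entire content of the corollary is the substitution $\psi\mapsto\epsilon\psi$ in Theorem \ref{GenWhittle}. The only points that need a moment's care are checking that $\epsilon\psi$ indeed inherits the positivity/monotonicity hypotheses of that theorem, correctly identifying the complement of $A_{\mathbf{n}}^{\epsilon}$ as a supremum event over the finite index set $\{\mathbf{j}\leq\mathbf{n}\}$, and the harmless strict-versus-nonstrict limiting step at the end — so if anything, that last adjustment is the ``hardest'' part, and it is still entirely routine.
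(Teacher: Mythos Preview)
Your proposal is correct and follows essentially the same approach as the paper: the paper's proof also just passes to the complement and invokes Theorem~\ref{GenWhittle} with the threshold rescaled by~$\epsilon$. You are in fact more careful than the paper, which writes the complement identity with a non-strict inequality and does not comment on the strict-versus-nonstrict issue at all.
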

\begin{proof}
	The result follows by first writing that
	\[
	P\left(  \sup_{\mathbf{j}\leq \mathbf{n}}\frac{\phi(S_\mathbf{j})}{\psi(u_{\mathbf{j}})}\geq \epsilon          \right) = 1- P\left( \frac{\phi(S_\mathbf{j})}{\psi(u_{\mathbf{j}})}\leq \epsilon , \forall \,  \mathbf{j}\leq \mathbf{n}        \right)
	\]
	and applying the result of the previous theorem. 
\end{proof}

\medskip

\noindent The Whittle-type inequality can also be employed to obtain the following convergence result. 

\begin{theorem}
	$\left\{S_{\mathbf{n}}, \mathbf{n} \in \mathbb{N}^{k}\right\}$ be a multiindexed demimartingale with $S_{\boldsymbol{\ell}}=0$ when $\prod_{i=1}^{k} \ell_{i}=0$. Let $\phi(\cdot)$ be a nonnegative convex function such that $\phi(0)= 0$. Let $\psi(u)$ be a positive nondecreasing function for $u>0$ such that $\psi(u) \to \infty$ as $u\to \infty$. Further, suppose that there is $s \in \{1,2,\ldots, k \}$ such that 
	\begin{equation}
	\label{cond1} \displaystyle\sum_{i =1}^{\infty} \frac{E\left(  \phi\left(S_{\mathbf{n};s;i}\right)- \phi\left(S_{\mathbf{n};s;i-1}\right)\right)}{\psi \left( u_{\mathbf{n};s;i}     \right)}<\infty
	\end{equation}
	for any nondecreasing sequence $u_{\mathbf{n}} \to \infty$ as $\mathbf{n} \to \infty$. Then,
	\[
	\frac{\phi(S_{\mathbf{n}})}{\psi(u_{\mathbf{n}})} \to 0 \quad \mbox{a.s. for}\quad \mathbf{n} \rightarrow \infty.
	\]
\end{theorem}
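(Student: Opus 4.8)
The plan is to derive the almost-sure convergence from a tail maximal bound, following exactly the route of the convergence result that comes after Theorem \ref{Chownew}. First I would fix $\epsilon>0$ and note that, since $\phi\geq 0$ and $\psi>0$, it suffices to prove $P\bigl(\limsup_{\mathbf{n}\to\infty}\phi(S_{\mathbf{n}})/\psi(u_{\mathbf{n}})>\epsilon\bigr)=0$ for every $\epsilon>0$. Writing $\mathbf{N}=(N,\ldots,N)$, one has
$$\Bigl\{\limsup_{\mathbf{n}\to\infty}\frac{\phi(S_{\mathbf{n}})}{\psi(u_{\mathbf{n}})}>\epsilon\Bigr\}\subseteq\bigcap_{N\geq 1}\Bigl\{\sup_{\mathbf{n}\geq\mathbf{N}}\frac{\phi(S_{\mathbf{n}})}{\psi(u_{\mathbf{n}})}\geq\epsilon\Bigr\},$$
and the probabilities on the right are nonincreasing in $N$, so the reduction is to showing that $P\bigl(\sup_{\mathbf{n}\geq\mathbf{N}}\phi(S_{\mathbf{n}})/\psi(u_{\mathbf{n}})\geq\epsilon\bigr)\to 0$ as $N\to\infty$.

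Next, after relabelling coordinates so that $s=k$, I would take $c_{\mathbf{j}}=1/\psi(u_{\mathbf{j}})$; since $u_{\mathbf{j}}$ is a nondecreasing array and $\psi$ is positive and nondecreasing, $\{c_{\mathbf{j}}\}$ is a nonincreasing array of positive numbers. Applying the Chow-type inequality of Theorem \ref{Chownew} with $g=\phi$ and this array, selecting the index $s=k$ in the minimum, and repeating over the tail orthant $\{\mathbf{n}\geq\mathbf{N}\}$ the argument used in the proof of the convergence theorem following Theorem \ref{Chownew} (bounding the indicator by $1$ and telescoping, using $\phi(S_{\boldsymbol{\ell}})=0$ whenever $\prod_i\ell_i=0$), I would reach a bound of the shape
$$\epsilon\,P\Bigl(\sup_{\mathbf{n}\geq\mathbf{N}}\frac{\phi(S_{\mathbf{n}})}{\psi(u_{\mathbf{n}})}\geq\epsilon\Bigr)\leq \frac{E\phi(S_{\mathbf{N}})}{\psi(u_{\mathbf{N}})}+\sum_{i>N}\frac{E\bigl(\phi(S_{\mathbf{N}^{*},i})-\phi(S_{\mathbf{N}^{*},i-1})\bigr)}{\psi(u_{\mathbf{N}^{*},i})},$$
where $\mathbf{N}^{*}=(N,\ldots,N)\in\mathbb{N}^{k-1}$ and $(\mathbf{N}^{*},i)=(N,\ldots,N,i)$.

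The last step is to let $N\to\infty$. The sum on the right is a tail of the series in hypothesis \eqref{cond1} (with the coordinates other than $s$ set equal to $N$), which is what that summability condition is designed to drive to $0$. For the leading term, writing $a_{i}=E\bigl(\phi(S_{\mathbf{N}^{*},i})-\phi(S_{\mathbf{N}^{*},i-1})\bigr)/\psi(u_{\mathbf{N}^{*},i})\geq 0$ one has $E\phi(S_{\mathbf{N}})=\sum_{i=1}^{N}\psi(u_{\mathbf{N}^{*},i})\,a_{i}$, and since $\psi(u_{\mathbf{N}^{*},i})$ is nondecreasing in $i$ with $\psi(u_{\mathbf{N}})\to\infty$, a Kronecker-type estimate — split the sum at a fixed level $M$, bound the head by $\psi(u_{\mathbf{N}^{*},M})\sum_{i}a_{i}$ and the remainder by $\psi(u_{\mathbf{N}})\sum_{i>M}a_{i}$, divide through by $\psi(u_{\mathbf{N}})$, then send $N\to\infty$ and afterwards $M\to\infty$ — yields $E\phi(S_{\mathbf{N}})/\psi(u_{\mathbf{N}})\to 0$. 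Thus the tail probability vanishes, and taking the union over rational $\epsilon>0$ gives $\phi(S_{\mathbf{n}})/\psi(u_{\mathbf{n}})\to 0$ a.s.

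I expect the genuinely delicate point to be the passage in the second step from the ``$\max_{\mathbf{j}\leq\mathbf{n}}$'' form of Theorem \ref{Chownew} to a legitimate estimate of the supremum over the full tail orthant $\{\mathbf{n}\geq\mathbf{N}\}$: one has to modify the array — for instance replace $\psi(u_{\mathbf{j}})$ by $\psi(u_{\mathbf{j}\vee\mathbf{N}})$, which remains a nonincreasing array and is unchanged on $\{\mathbf{j}\geq\mathbf{N}\}$ — and then verify that the bound produced by the $s=k$ coordinate collapses, after telescoping, onto the single ray through $(N,\ldots,N)$ rather than retaining a dependence on the upper truncation. This is precisely the mechanism already present in the convergence theorem following Theorem \ref{Chownew}, the difference being that the decay which is there imposed as a separate hypothesis must here be extracted from \eqref{cond1} together with its validity for every nondecreasing $u_{\mathbf{n}}\to\infty$.
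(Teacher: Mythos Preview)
Your approach is correct and mirrors the paper's own argument. The only cosmetic difference is that the paper, being in the Whittle section, invokes the Whittle-type corollary immediately preceding the theorem rather than Theorem~\ref{Chownew}; once you bound the indicator in Theorem~\ref{Chownew} by $1$, the two maximal inequalities give the identical upper bound $\sum_{i}E(\phi(S_{\mathbf{n};s;i})-\phi(S_{\mathbf{n};s;i-1}))/\psi(u_{\mathbf{n};s;i})$, and from there the split into the leading term $E\phi(S_{\mathbf{N}})/\psi(u_{\mathbf{N}})$ plus the tail sum, and the claim that both vanish, are exactly as in the paper. You are in fact more explicit than the paper on two points it dispatches in a single sentence (``due to the given assumptions both summands converge to zero''): the Kronecker-type estimate for the leading term and the passage from the ``$\max_{\mathbf{j}\leq\mathbf{n}}$'' inequality to the supremum over the tail orthant.
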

\begin{proof}
	For simplicity, we assume that $k=2$ and without loss of generality we assume that \eqref{cond1} is satisfied for $s=2$. Then,
	\begin{align*}
	& P\left(  \sup_{(i,j) \geq (n_1, n_2)}  \frac{\phi(S_{ij})}{\psi(u_{ij} )}\geq \epsilon        \right) \leq   \sum_{\ell = 1}^{\infty} \frac{E\phi(S_{n_1\ell}) - E\phi(S_{n_1\ell-1})}{\epsilon\psi(u_{n_1\ell })}
	\leq \frac{E(\phi(S_{n_1n_2}))}{\epsilon\psi(u_{n_1n_2})} +   \sum_{\ell = n_2+1}^{\infty} \frac{E\phi(S_{n_1\ell}) - E\phi(S_{n_1\ell-1})}{\epsilon\psi(u_{n_1\ell })} .
	\end{align*}
	Notice that due to the given assumptions both summands converge to zero as $ \mathbf{n} \rightarrow \infty$ which leads to the desired convergence.
\end{proof}

\medskip

\noindent{\textbf{Acknowledgement}} Work of the second author was supported  under the ``INSA Senior Scientist" scheme at the CR Rao Advanced Institute of Mathematics, Statistics and Computer Science, Hyderabad, India.

\end{document}